    \theoremstyle{plain}
    \newtheorem{theorem}{Theorem}
    \newtheorem{corollary}[theorem]{Corollary}
    \newtheorem{lemma}[theorem]{Lemma}
    \newtheorem{proposition}[theorem]{Proposition}
    \theoremstyle{definition}
    \newtheorem{assumption}{Assumption}
    \newtheorem{remark}[theorem]{Remark}
    \newtheorem*{remark*}{Remark}
    \newcommand{\R}{\mathbb R}
    \newcommand{\pr}{\mathbf P}
    \renewcommand{\P}{\mathbf P}
    \newcommand{\e}{\mathbf E}
    \newcommand{\E}{\mathbf E}
    \newcommand{\ind}{{\rm 1}}
    \newcommand\eps\varepsilon
    \newcommand{\N}{{\mathbb{N}}}
    \newcommand{\EW}[1]{\mathbf{E}\left[#1\right]}
\begin{document}
    \title[Random walks with square-root boundaries]
    {Random walks with square-root boundaries:\\
    the case of exact boundaries $g(t)=c\sqrt{t+b}-a$} 
    \thanks{S. Terveer was partially supported by the Deutsche Forschungsgemeinschaft (DFG, German Research Foundation) – TRR 352 “Mathematics of Many-Body Quantum Systems and Their Collective Phenomena" – Project-ID 470903074.\\
    V. Wachtel was partially supported by DFG}
    \author[Denisov]{Denis Denisov}
    \address{Department of Mathematics, University of Manchester, Oxford Road, Manchester M13 9PL, UK}
    \email{denis.denisov@manchester.ac.uk}

    \author[Sakhanenko]{Alexander Sakhanenko}
    \address{Sobolev Institute of Mathematics, Russia}
    \email{aisakh@mail.ru} 
    
    \author[Terveer]{Sara Terveer}
    \address{Mathematisches Institut, Ludwig-Maximilians-Universität München, Theresienstrasse 39, 80333 München, Germany}
    \email{terveer@math.lmu.de}
    
    \author[Wachtel]{Vitali Wachtel}
    \address{Faculty of Mathematics, Bielefeld University, Germany}
    \email{wachtel@math.uni-bielefeld.de}

    \begin{abstract}
    	Let $S(n)$ be a real valued random walk with i.i.d. increments which have zero mean and finite variance.
        We are interested in the asymptotic properties of the stopping time 
        $T(g):=\inf\{n\ge1: S(n)\le g(n)\}$, where $g(t)$ is a boundary function.
        In the present paper we deal with the 
        parametric family of boundaries   $\{g_{a,b}(t)=c\sqrt{t+b}-a, b\ge0, a>c\sqrt{b}\}$.
        First, assuming that sufficiently many moments of increments of the  walk are finite,  we construct a positive space-time harmonic function $W(a,b)$.   
        Then we show that there exist $p(c)>0$ and a constant  $\varkappa(c)$ such that
        $\P(T_{g_{a,b}}>n)\sim \varkappa(c)\frac{W(a,b)}{n^{p(c)/2}}$ as $n\to\infty$.
\end{abstract}
    
    
    \keywords{Random walk, exit time, harmonic function, conditioned process}
    \subjclass{Primary 60G50; Secondary 60G40, 60F17}
    \maketitle

\section{Introduction, main results and discussion}
Consider a one-dimensional random walk
$$
S(0)=0,\quad S(n)=X_1+X_2+\ldots+X_n,\ n\ge1,
$$
where $X_1,X_2,\ldots$ are independent copies of a random variable $X$. We shall always assume that the random variable $X$ has zero mean and unit variance, $\E X=0$ and $\E X^2=1$. The main purpose of this paper is to study the behaviour of $\{S(n)\}$ killed at crossing a square-root boundary.
More precisely, for a function $g(t)$ we define the stopping time 
$$
T(g):=\inf\{n\ge1: S(n)\le g(n)\}.
$$
We shall always assume that $g(0)<0$ and that there exists a constant $c\in\R$ such that
\begin{equation}
\label{eq:g-assump}
g(t)\sim c\sqrt{t}\quad\text{as }t\to\infty.
\end{equation}
A quite important role in the study of the stopping time $T_g$ with general $g$ satisfying \eqref{eq:g-assump} is played by the following parametric family of boundary functions. For $b\ge 0$ and $a>c\sqrt{b}$ we set 
\begin{equation}
\label{eq:g-assump.a.b}
g_{a,b}(t)=c\sqrt{t+b}-a,\ t\ge0.
\end{equation}
In this case we set
$$
T_{a,b}:=T(g_{a,b})=\inf\{n\geq 1:a+S(n)\leq c\sqrt{n+b}\}.
$$
For this special family of boundaries we can interpret $a$ as a starting position of the walk and $b$ as a time shift. 
Using this interpretation of parameters $a$ and $b$ we may define a more general family of stopping times. For every boundary function $g$ we set 
$$
T_{a,b}(g):=\inf\{n\ge1: a+S(n)\le g(n+b)\},\quad a>g(b).
$$
Thus, $T_{a,b}=T_{a,b}(c\sqrt{\cdot})$ and, consequently, we deal with the exact boundary $c\sqrt{t}$ modulo temporal and spatial shifts.

First-passage problems related to stopping times $T(g)$ and $T_{a,b}$ can be seen as a one-sided analogue of the problem with two boundaries:
$$
T(g_1,g_2):=\inf\{n\ge N: S(n)\notin(g_1(n),g_2(n))\},
$$
where $N\ge1$, $g_i(t)\sim c_i\sqrt{t}$ with $c_1\le0\le c_2$, $c_1<c_2$. 
(The additional parameter $N$ is needed here, since the interval $(g_1(n),g_2(n))$ can be empty for small values of $n$.)
Apparently, the study of such two-sided first-passage problems has been initiated by Blackwell and Freedman~\cite{BF64}.
They have considered first time when the absolute values of a symmetric simple random walk exceeds $c\sqrt{n}$ and have shown that this random time has finite expectation if and only if $c<1$. Motivated by this surprising observation Breiman~\cite{Breiman65} has investigated the tail behaviour of $T_{g_1,g_2}$
in the case when $-g_1(t)=g_2(t)=c\sqrt{t}$. Assuming that the third moment of $X$ is finite he has shown that 
$$
\P(T(g_1,g_2)>n)\sim \alpha n^{-\gamma(c)},
$$
where $\gamma(c)$ does not depend on the distribution of the walk.  Breiman's result was later improved by Greenwood and Perkins~\cite{GP83}, who have shown that if $\E X^2\log(1+|X|)$ is finite then there exist a positive number $p(c_1,c_2)$ and a slowly varying function $L_{g_1,g_2}$ such that 
$$
\P(T(g_1,g_2)>n)\sim\frac{L_{g_1,g_2}(n)}{n^{p(c_1,c_2)/2}}
\quad\text{as }n\to\infty.
$$
Moreover, they have proven a limit theorem for $S(n)$ conditioned on
$\{T_{g_1,g_2}>n\}$. 

The corresponding picture in the case of one-sided boundaries is not as complete as in the two-sided case. Greenwood and Perkins~\cite{GP83} have shown that if $\E X^2\log(1+|X|)<\infty$ and if $g(t)\sim c\sqrt{t}$ with some $c\le0$, then there exist $p(c)>0$ and a slowly varying function 
$L_g$ such that 
\begin{equation}
\label{eq:GP}
\P(T(g)>n)\sim\frac{L_g(n)}{n^{p(c)/2}}.
\end{equation}
As in the two-sided case, the authors prove also a limit theorem for $S(n)$ conditioned on $\{T(g)>n\}$. Later it was shown in \cite{DSW19} that \eqref{eq:GP}
remains valid without extra moment assumption in the case $c=0$. Moreover, that paper contains sufficient and necessary conditions on boundaries $g(t)$ under which $g(t)$ is asymptotically constant.  The case $c=0$, i.e. $g(t)=o(\sqrt{t})$ is probably the most studied case of first-passage times over moving boundaries, and we refer to \cite{DSW19} for a rather detailed overview of existing results.

To the best of our knowledge, there is no version of \eqref{eq:GP} in the case when \eqref{eq:g-assump} holds with some $c>0$. Furthermore, as far as we know, there was no progress after \cite{GP83} in studying exit times
$T(g)$ and related problems in the case $c\neq0$ despite the fact that  walks with square-root boundaries play a significant role in various statistical and learning settings; see, for example, a recent paper Harvey et al \cite{HLPR}.

The main purpose of the present paper is to develop a technique which will allow one to study the tail behaviour of $T_{a,b}$ for boundaries $g_{a,b}(t)$ defined in \eqref{eq:g-assump.a.b} with negative as well as positive values of $c$. The case of general boundaries $g(t)$ satisfying \eqref{eq:g-assump}  will be studied in the follow-up paper~\cite{DSTW25}.

We shall take a route via potential theory for killed random walks and construct appropriate harmonic functions.
In the most classical case of constant boundaries a positive harmonic function describes, on one hand, the dependence of the distribution of the exit time on the initial state. On the other hand, harmonic functions allow one to perform the corresponding Doob $h$-transform and to introduce the conditioned process. In the case of square-root boundaries one expects that the same role will be played by the so-called space-time harmonic functions. To give reasons for that we describe next some known results for the Brownian motion. 

For a Brownian motion $B(t)$ killed at crossing $g_{a,b}$ such functions have been constructed by Novikov \cite{Novikov71}. 
Using these functions he obtained explicit expressions for moments of exit times. 
We now define that functions, since they will play a crucial role in our analysis of discrete time random walks. For every $p\in\R$ we set
\begin{align}                                                   \label{eq:Vdef.2}
\psi_p(x):=e^{x^2/4}D_{p}(x)
\quad\text{and}\quad
V_p(x,t):=t^{p/2}\psi_p\left(\frac{x}{\sqrt{t}}\right),
\quad t>0,
\end{align}
where $D_p(z)$ is the parabolic cylinder function. We also set 
\begin{align}
\label{eq:Vdef.1}
V_p(x,0)=x^p
\quad\text{for}\quad x>0.
\end{align} 
Novikov has shown that 
the processes $V_p(a+B(t),b+t)$ are martingales.  

Let $p(c)$ be the minimal positive root of the mapping $p\mapsto\psi_p(c)$. This implies that $V_{p(c)}(c\sqrt{t},t)=0$ and $V_{p(c)}(x,t)>0$ for all $x>c\sqrt{t}$. Combining this with the mentioned above martingale property of $V_{p(c)}$, we conclude that 
\begin{equation}\label{eq:cont.harm}
V_{p(c)}(a,b)=\E[V_{p(c)}(a+B(t),b+t);T_{a,b}^{(bm)}>t],\quad t>0,
\end{equation}
where 
$$
T_{a,b}^{(bm)}=\inf\{t>0:a+B(t)\le c\sqrt{b+t}\}.
$$
In other words, $V_{p(c)}$ is a positive {\it space-time harmonic}
function for $B(t)$ killed at $T_{a,b}^{(bm)}$. Equivalently, the process
$V_{p(c)}(a+B(t),b+t)\ind_{\{T_{a,b}^{(bm)}>t\}}$ is a positive martingale.
We note also that the space-time harmonicity is equivalent to the 'standard' harmonicity  for the two-dimensional process $(B(t),t)$ in the sense 
\begin{align*}
&\frac{\partial V_{p(c)}(x,t)}{\partial t}+\frac{1}{2}\frac{\partial^2 V_{p(c)}(x,t)}{\partial x^2}=0, \\
& V_{p(c)}(c\sqrt{t},t)=0,
\end{align*}
see~\eqref{eq:V-2deriv} below. 
The importance of the space-time harmonic function $V_{p(c)}$ becomes clear from the relation
\begin{equation}\label{eq:uchiyama}
\P(T_{a,b}^{(bm)}>t)\sim \varkappa(c)\frac{V_{p(c)}(a,b)}{t^{p(c)/2}}
\quad\text{as }t\to\infty,
\end{equation}
where $\varkappa(c)>0$. This relation has been shown by Uchiyama~\cite{U80}.

Our first result deals with space-time harmonic functions for discrete time random walks killed at down-crossing of the boundary $g_{a,b}$.
\begin{theorem}
\label{thm:space-time-h}
Assume that $\E |X|^{2+\delta}<\infty$ for some $\delta>0$ and
$\E |X|^{p(c)}<\infty$, where $p(c)$ is the minimal positive root of
$p\mapsto\psi_p(c)$. Then the function
$$
W(a,b):=\lim_{n\to\infty}\E[V_{p(c)}(a+S(n),b+n);T_{a,b}>n]
$$
is well-defined and satisfies
\begin{equation}\label{eq:sth1}
W(a,b)=\E[W(a+S(n),b+n);T_{a,b}>n],\quad n\ge1.
\end{equation}
Moreover, the mapping $a\mapsto W(a,b)$ is monotone increasing
and
$$
W(x,t)\sim V_{p(c)}(x,t)\quad \text{as }x,t\to\infty\text{ and }\frac{x}{\sqrt{t}}\ge c+\varepsilon \text{ with } \varepsilon>0.
$$
\end{theorem}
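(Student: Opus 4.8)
The plan is to establish the three assertions of Theorem~\ref{thm:space-time-h} --- existence of the limit $W$, the harmonic identity \eqref{eq:sth1}, and monotonicity together with the asymptotic equivalence $W\sim V_{p(c)}$ --- by controlling the martingale defect of $V_{p(c)}(a+S(n),b+n)$ along the event $\{T_{a,b}>n\}$. Since $V_{p(c)}$ is space-time harmonic for Brownian motion, i.e. it solves the heat equation with the Dirichlet condition on $x=c\sqrt{t}$ as recalled in \eqref{eq:V-2deriv}, a Taylor expansion of $V_{p(c)}(x+y,t+1)$ in the spatial variable around $(x,t)$ should show that
\begin{equation*}
\bigl|\E[V_{p(c)}(a+S(n+1),b+n+1);T_{a,b}>n+1]-\E[V_{p(c)}(a+S(n),b+n);T_{a,b}>n]\bigr|
\end{equation*}
is summable in $n$. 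Writing $Y_n:=\E[V_{p(c)}(a+S(n),b+n);T_{a,b}>n]$, I would show $Y_{n+1}-Y_n=R_n+B_n$, where $R_n$ collects the Taylor remainder from the increment $X_{n+1}$ (third-order term, controlled by $\E|X|^{2+\delta}$ and by decay estimates for the derivatives of $V_{p(c)}$, which behave like $t^{(p(c)-k)/2}$ times a Schwartz-type function of $x/\sqrt t$ away from the boundary) and $B_n$ is the boundary-overshoot term coming from paths that cross $g_{a,b}$ between times $n$ and $n+1$ but land with $a+S(n+1)>g_{a,b}(n+1)$. Summability of $R_n$ gives a Cauchy sequence; the delicate point is that both $R_n$ and $B_n$ must be integrated against the restriction to $\{T_{a,b}>n\}$, whose probability decays polynomially, so one needs that the per-step error, after this restriction, is $o(n^{-1})$ with enough room to sum. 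Once $\sum_n|Y_{n+1}-Y_n|<\infty$ the limit $W(a,b)=\lim Y_n$ exists.

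For the harmonic identity \eqref{eq:sth1}, I would apply the Markov property at time $n$: conditionally on $\mathcal F_n$ and on $\{T_{a,b}>n\}$, the post-$n$ walk is an independent copy started from $a+S(n)$ with time shift $b+n$, so
\begin{equation*}
\E[V_{p(c)}(a+S(n+m),b+n+m);T_{a,b}>n+m]
=\E\bigl[\,\E_{a+S(n),b+n}[V_{p(c)}(\cdot+S(m),\cdot+m);\widetilde T>m];T_{a,b}>n\bigr],
\end{equation*}
and letting $m\to\infty$ inside, using the already-established convergence $Y_m\to W$ uniformly enough in the starting point (here I would need a locally uniform version of the limit, obtained from uniform bounds on the Taylor remainders for starting points in compact sets bounded away from the boundary, plus a cutoff argument for the rare configurations where $a+S(n)$ is close to $g_{a,b}(n+b)$), yields $W(a,b)=\E[W(a+S(n),b+n);T_{a,b}>n]$.

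Monotonicity in $a$ should follow from a coupling: two walks started at $a<a'$ with the same increments satisfy $a+S(n)<a'+S(n)$, hence $T_{a,b}\le T_{a',b}$ pathwise, and since $x\mapsto V_{p(c)}(x,t)$ is increasing on $x>c\sqrt t$ (as $p(c)$ is the \emph{minimal} positive root of $\psi_{p}(c)$, so $\psi_{p(c)}>0$ on $(c,\infty)$ and its derivative can be shown positive there), each $Y_n$ is monotone in $a$, and the limit inherits this. For the asymptotics $W(x,t)\sim V_{p(c)}(x,t)$ as $x,t\to\infty$ with $x/\sqrt t\ge c+\varepsilon$, I would use the harmonic identity and scaling: write $W(x,t)=V_{p(c)}(x,t)+(W(x,t)-Y_0(x,t))$ where $Y_0(x,t)=V_{p(c)}(x,t)$, and bound the difference $W-V_{p(c)}=\sum_{n\ge0}(Y_{n+1}-Y_n)$ by the summed Taylor-remainder estimate; the key is that for a starting point deep inside the region $x/\sqrt t\ge c+\varepsilon$, the remainder terms carry an extra negative power of $t$ relative to the main term $V_{p(c)}(x,t)\asymp t^{p(c)/2}$, so the relative error tends to $0$. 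The main obstacle throughout is the \textbf{boundary-overshoot term} $B_n$: near the curve $x=c\sqrt t$ the function $V_{p(c)}$ vanishes only linearly in the distance to the boundary while its second derivative does not, so the naive Taylor bound is not summable there; the resolution should be to exploit that on $\{T_{a,b}>n\}$ the walk is, with the bulk of the probability, at distance $\gg\sqrt{\log n}$ from the boundary (a standard estimate for walks conditioned to stay above an asymptotically-$c\sqrt t$ curve, in the spirit of \cite{DSW19,GP83}), and to treat the thin near-boundary layer separately using the moment assumption $\E|X|^{p(c)}<\infty$ to control the contribution of large jumps across the boundary.
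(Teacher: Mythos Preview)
Your overall architecture --- write the increment $Y_{n+1}-Y_n$ as a one-step martingale defect plus a boundary term and show summability --- matches the paper's representation \eqref{eq:main_decomp}. But two load-bearing ingredients that the paper develops at length are absent from your plan, and without them the summability does not close.

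First, you never say how you obtain an a~priori tail bound $\P(T_{a,b}>n)\le C(1+|a|^{p_1})n^{-p_1/2}$ for some $p_1<p(c)$. You invoke that the restricted probability ``decays polynomially'' and later that the conditioned walk stays far from the boundary ``in the spirit of \cite{DSW19,GP83}'', but those references do not supply the bound in the present setting (in particular not for $c>0$). The paper spends all of Section~3 building explicit positive supermartingales of the form $(\overline V_{p_1}(R+a+S(n),b+n)-C|R+a+S(n)|^{p_1-\delta})^+$ (resp.\ a time-shifted variant for $p(c)<1$), applies optional stopping, and then converts this into the tail estimate via an FKG-type inequality (Lemmas~\ref{lem:optionalstopping}--\ref{lem:Tbound}). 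This is not a technicality: without a quantitative tail bound your per-step error $R_n$ on $\{T_{a,b}>n\}$ cannot be shown to be $o(n^{-1})$, and the moment bounds $\E T_{a,b}^{p_1/2}<\infty$ that you implicitly need (e.g.\ to control $\sum_k\E[(1+a+S(k))^{p(c)-3};T_{a,b}>k]$) are unavailable.

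Second, your treatment of the near-boundary layer is too soft. The defect $f_k$ behaves near the curve like $(1+a+S(k))^{p(c)-1}/(1+a+S(k)-c\sqrt{b+k})^{p(c)-1}$ (see Lemma~\ref{lem:remainder}), which blows up there, and the claim that the conditioned walk sits at distance $\gg\sqrt{\log n}$ is neither proved nor strong enough. The paper handles this by deriving \emph{local} probability estimates $\P(a+S(k)\in c\sqrt{b+k}+[j-1,j],\,T_{a,b}>k)\le C(1+j)^{1/2}(1+|a|^{p_1})k^{-p_1/2-3/4}$ (Lemma~\ref{lem:loc.prob2}), obtained by a duality argument combined with the tail bound above; summing in $j$ and then in $k$ is what makes \eqref{eq:prop.5}--\eqref{eq:prop.6} converge. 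Your proposed fix (cut off a thin layer and use $\E|X|^{p(c)}<\infty$ for large jumps) does not address the main difficulty, which is the accumulated time the surviving walk spends close to, but not across, the boundary. Finally, a minor point: your description of the ``boundary-overshoot term'' as paths that cross and return is not the mechanism; the killing is absorbing, and the boundary contribution enters only through $\overline V_{p(c)}=0$ below the curve, so the entire burden is on the Taylor remainder $f_k$ restricted to $\{T_{a,b}>k\}$.
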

It is immediate from the definition of the stopping time $T_{a,b}$ that \eqref{eq:sth1} is equivalent to 
$$
W(a,b)=\E\left[W(a+S(n),b+n);
\min_{k\le n}\left(a+S(k)-c\sqrt{n+b}\right)>0\right].
$$
Thus, also for the function $W$, we can interpret the variable $a$ as a spatial (starting) point and $b$ as a time shift for the  boundary $c\sqrt{t}$. Equation~\eqref{eq:sth1} is a discrete-time analogue of~\eqref{eq:cont.harm} and it can be  viewed as a standard discrete harmonicity for 2-dimensional process $(S(n),n)$ killed at  $T_{a,b}$.

The space-time harmonic function is intrinsically connected to the stopping time $T_{a,b}$. We underline this connection by the following result.
\begin{corollary}
\label{cor:UBound}
Under the conditions of Theorem~\ref{thm:space-time-h},
$$
\P(T_{a,b}>n)\le M\frac{W(a,b)}{n^{p(c)/2}},\quad n\ge b,
$$
with some constant $M$ which is independent of $a$ and $b$.
(This constant may depend on $c$ and on the distribution of $X_1$.)
\end{corollary}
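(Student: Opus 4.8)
The plan is to derive the upper bound by combining the harmonicity identity \eqref{eq:sth1} with a Markov-type inequality, using the exact asymptotics $W(x,t)\sim V_{p(c)}(x,t)$ from Theorem~\ref{thm:space-time-h} together with elementary bounds on $V_{p(c)}$. First I would fix $n\ge b$ and apply \eqref{eq:sth1} at time $m:=\lfloor n/2\rfloor$ (any fixed fraction of $n$ works), writing
\begin{equation}
\label{eq:cor-start}
W(a,b)=\E[W(a+S(m),b+m);T_{a,b}>m].
\end{equation}
On the event $\{T_{a,b}>m\}$ we have $a+S(m)>c\sqrt{b+m}$, and one checks from the definition \eqref{eq:Vdef.2} and known asymptotics of the parabolic cylinder function that there is a constant $c_1=c_1(c)>0$ with
\begin{equation}
\label{eq:cor-lower}
V_{p(c)}(x,t)\ge c_1\,t^{p(c)/2}\quad\text{for all }t\ge1\text{ and }x\ge (c+1)\sqrt{t},
\end{equation}
and, more delicately, a lower bound of the right order uniformly for $x>c\sqrt{t}$ that still captures the behaviour near the boundary. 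Since $W\sim V_{p(c)}$ in the regime $x/\sqrt t\ge c+\varepsilon$ and $W$ is increasing in $a$, one obtains a uniform lower bound $W(y,b+m)\ge c_2\,(b+m)^{p(c)/2}$ on a suitable sub-event, which when inserted into \eqref{eq:cor-start} yields
\begin{equation}
\label{eq:cor-markov}
W(a,b)\ge c_2\,(b+m)^{p(c)/2}\,\P\bigl(T_{a,b}>m,\ a+S(m)\ge (c+1)\sqrt{b+m}\bigr).
\end{equation}

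Next I would remove the extra position constraint in \eqref{eq:cor-markov}. The point is that, given $\{T_{a,b}>m\}$, forcing the walk to reach level $(c+1)\sqrt{b+m}$ by a further time of order $m$ and then to stay above the (eventually essentially flat on this scale) boundary costs only a constant factor: by the functional CLT and the fact that $g_{a,b}(t)$ grows like $c\sqrt t$, one has
\begin{equation}
\label{eq:cor-fclt}
\P\bigl(T_{a,b}>2m,\ a+S(2m)\ge(c+1)\sqrt{b+2m}\bigr)\ge c_3\,\P(T_{a,b}>m)
\end{equation}
for $m$ large, uniformly in $a,b$ with $a>c\sqrt b$ — this is where one uses $n\ge b$, so that $b+n\asymp n$ and the diffusive scaling is the relevant one. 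Combining \eqref{eq:cor-markov} (applied at time $2m\le n$) with \eqref{eq:cor-fclt} and the monotonicity $\P(T_{a,b}>n)\le\P(T_{a,b}>m)$ gives $\P(T_{a,b}>n)\le M\,W(a,b)/n^{p(c)/2}$ after absorbing constants; small values of $n$ (with $b\le n\le n_0(c)$) are handled trivially by enlarging $M$, since $W(a,b)>0$.

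The main obstacle is \eqref{eq:cor-fclt}: one needs a lower bound on the probability that the killed walk both survives and is ``well inside'' the survival region at a comparable time, uniformly over all admissible $(a,b)$, including the delicate case where $a$ is only slightly above $c\sqrt b$ (the walk starts near the boundary) and the case of large $c>0$ (where the boundary itself is escaping). A clean way around a direct FCLT estimate is to run the argument purely within potential theory: iterate \eqref{eq:sth1} and use that $V_{p(c)}(a+S(n),b+n)\ind_{\{T_{a,b}>n\}}$ is, up to the error terms controlled in the proof of Theorem~\ref{thm:space-time-h}, an approximate martingale, so that $\E[V_{p(c)}(a+S(n),b+n);T_{a,b}>n]\le C\,W(a,b)$; then bound $V_{p(c)}(a+S(n),b+n)$ from below on $\{T_{a,b}>n\}$ by a constant multiple of $n^{p(c)/2}$ after restricting to the sub-event where the walk is a fixed fraction above the boundary, whose complement within $\{T_{a,b}>n\}$ is shown to carry a negligible fraction of the mass via the same fluctuation estimate. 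Either route reduces the corollary to (i) two-sided control of $V_{p(c)}$ near and far from the boundary and (ii) a single uniform ``the survivor is deep inside with positive probability'' lemma; I expect (ii) to be the real work, and it should be extractable from the estimates already assembled for Theorem~\ref{thm:space-time-h}.
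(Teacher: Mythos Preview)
Your outline has the right shape (harmonicity plus a lower bound on $W$ away from the boundary), but the step you flag as ``the main obstacle'' really is a gap: you have not proved \eqref{eq:cor-fclt}, and the uniformity you need there---over all admissible $(a,b)$, including $a$ arbitrarily close to $c\sqrt b$---is essentially as strong as the corollary itself. Starting from a point $a=c\sqrt b+\varepsilon$ with tiny $\varepsilon$, the conditional probability that the survivor is a full $\sqrt{b+2m}$ above the boundary at time $2m$ is not obviously bounded below by a universal constant; a functional CLT argument gives this only once you already know the survival probability decays like $n^{-p(c)/2}$, which is circular.

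The paper sidesteps this entirely by a correlation inequality rather than a fluctuation estimate. Since $a\mapsto W(a,b)$ is increasing, the event $\{W(a+S(n),b+n)>x\}$ and the event $\{T_{a,b}>n\}$ are both increasing in the path $(S(1),\ldots,S(n))$, so an FKG-type bound (Lemma~\ref{lem:probprodbound}, relying on Lemma~24 of \cite{DSW16}) gives
\[
\P(T_{a,b}>n)\le \frac{\E[W(a+S(n),b+n);T_{a,b}>n]}{\E[W(a+S(n),b+n);a+S(n)>c\sqrt{b+n}]}=\frac{W(a,b)}{\E[W(a+S(n),b+n);a+S(n)>c\sqrt{b+n}]}.
\]
The denominator now involves the \emph{unconditioned} walk, so a plain central limit theorem bound $\P(a+S(n)>(c+1)\sqrt{b+n})\ge\P(S(n)>(|c|+1)\sqrt{2n})\ge\text{const}$ (using $a>c\sqrt b$ and $n\ge b$) together with $W(x,t)\ge A\,t^{p(c)/2}$ for $x\ge(c+1)\sqrt t$ finishes the proof in one line. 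The moral: decoupling via FKG replaces your missing lemma \eqref{eq:cor-fclt} by a trivial CLT estimate.
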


To prove Theorem~\ref{thm:space-time-h} we first construct a family of positive supermartingales for $S(n)$ killed at $T_{a,b}$. To this end we modify appropriately functions $V_p(x,t)$ for $p<p(c)$. (To understand, why $V_p$ with $p<p(c)$ is a good starting point, we look again at the Brownian motion. Recalling that
$V_p(a+B(t),b+t)$ is a martingale and that $V_p(c\sqrt{t},t)>0$ for every $p<p(c)$
and using the optional stopping theorem, we infer that
$V_p(a+B(t),b+t){\ind\{T_{a,b}^{(bm)}>t\}}$ is a supermartingale.) 
Applying the Optimal Stopping Theorem to that martingales, we derive suboptimal upper bounds for the tail of $T_{a,b}$, see Lemma~\ref{lem:Tbound} below. Having that bound we construct $W(a,b)$
by applying the strategy from \cite{DW15}, where harmonic functions for multidimensional walks in cones have been constructed.

Construction of supermartingales in the proof of Theorem~\ref{thm:space-time-h} is lengthy but uses only quite elementary calculations.  A disadvantage of this approach is a slightly stronger moment condition $\E|X|^{2+\delta}<\infty$. 
This moment condition will be relaxed in the follow-up paper~\cite{DSTW25}.

We now state our result on the tail behaviour of the stopping time $T_g$.
\begin{theorem}
\label{thm:tail}
Assume that all the conditions of Theorem~\ref{thm:space-time-h} are valid.
Then, for all fixed $a,b$ such that $a>c\sqrt{b}$,
\begin{equation}
\label{eq:asymp2}
\P(T_{a,b}>n)\sim\varkappa(c)\frac{W(a,b)}{n^{p(c)/2}},
\end{equation}
where $\varkappa(c)$ is the same constant as in~\eqref{eq:uchiyama}.
\end{theorem}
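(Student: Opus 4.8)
The plan is to transfer Uchiyama's Brownian asymptotics \eqref{eq:uchiyama} to the random walk by a dyadic-splitting / coupling argument, using the already-constructed space-time harmonic function $W$ and the harmonicity identity \eqref{eq:sth1} as the bridge. First I would fix $a,b$ and write, for an intermediate time $m=m(n)$ with $m\to\infty$ but $m/n\to0$,
\begin{equation*}
\P(T_{a,b}>n)=\E\bigl[\P_{a+S(m),\,b+m}(T>n-m);\,T_{a,b}>m\bigr],
\end{equation*}
where $\P_{x,t}$ refers to the walk started with shift parameters $(x,t)$. On the event $\{T_{a,b}>m\}$ the position $a+S(m)$ is typically of order $\sqrt m$, so the rescaled entries $(a+S(m))/\sqrt n$ are small; this is exactly the regime where a functional CLT lets one compare the walk over the remaining $n-m$ steps with a Brownian motion, and where $\P(T(g)>n)$ ought to look like the Brownian quantity $\varkappa(c)V_{p(c)}(a+S(m),b+m)/n^{p(c)/2}$. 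The key analytic input is the local-in-space uniformity of \eqref{eq:uchiyama}: one needs
\begin{equation*}
\P_{x,t}(T>N)=\varkappa(c)\frac{V_{p(c)}(x,t)}{N^{p(c)/2}}\bigl(1+o(1)\bigr)
\end{equation*}
uniformly for $x/\sqrt{N}\to0$ (equivalently $x=o(\sqrt N)$, $t=o(N)$), which combines Uchiyama's theorem with a normal-approximation step controlled by the moment assumption $\E|X|^{2+\delta}<\infty$.

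Next I would plug this expansion back in. Using $V_{p(c)}(x,t)=t^{p(c)/2}\psi_{p(c)}(x/\sqrt t)$ and the scaling $V_{p(c)}(x,t)\sim x^{p(c)}$ is not quite what is needed; rather, the homogeneity $V_{p(c)}(\lambda x,\lambda^2 t)=\lambda^{p(c)}V_{p(c)}(x,t)$ shows that $V_{p(c)}(a+S(m),b+m)/n^{p(c)/2}=V_{p(c)}((a+S(m))/\sqrt n,(b+m)/n)$, and on the typical event this is close to $W(a+S(m),b+m)/n^{p(c)/2}$ by the asymptotic equivalence $W\sim V_{p(c)}$ from Theorem~\ref{thm:space-time-h} (the regime $x/\sqrt t\ge c+\varepsilon$ is the relevant one since, conditioned on survival up to $m$, the walk sits strictly above the boundary). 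Therefore
\begin{equation*}
\P(T_{a,b}>n)=\frac{\varkappa(c)}{n^{p(c)/2}}\,\E\bigl[W(a+S(m),b+m);\,T_{a,b}>m\bigr]\bigl(1+o(1)\bigr)=\varkappa(c)\frac{W(a,b)}{n^{p(c)/2}}\bigl(1+o(1)\bigr),
\end{equation*}
where the last equality is precisely the harmonicity \eqref{eq:sth1} applied at time $m$. To make the first equality rigorous one must show the contribution of atypical configurations is negligible: when $a+S(m)$ is much larger than $\sqrt m$ one uses that $W(a+S(m),b+m)$ has a bounded first moment on $\{T_{a,b}>m\}$ (this is built into the convergence defining $W$, and can be upgraded to uniform integrability), and when $a+S(m)$ is close to the boundary one uses the upper bound of Corollary~\ref{cor:UBound} together with the fact that $W$ is small there.

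The main obstacle I expect is establishing the required \emph{uniform} version of the Brownian-type asymptotics, i.e.\ that $\P_{x,t}(T>N)\sim\varkappa(c)V_{p(c)}(x,t)N^{-p(c)/2}$ holds uniformly over a growing range of starting points $x=x(N)$ and shifts $t=t(N)$ with $x/\sqrt N\to0$. Uchiyama's result \eqref{eq:uchiyama} is stated for fixed $a,b$, and passing to the regime where $x$ and $t$ grow with $N$ while staying $o(\sqrt N)$ and $o(N)$ requires either revisiting the proof of \eqref{eq:uchiyama} with explicit error control, or a separate coupling of the walk over $[m,n]$ to a Brownian motion with a quantitative KMT-type or Skorokhod-embedding bound, then showing the boundary-crossing probabilities are stable under the coupling error. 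A convenient workaround, which I would pursue if the fully uniform statement is awkward, is to iterate the splitting: introduce a second intermediate scale and bootstrap off the already-proven upper bound in Corollary~\ref{cor:UBound} to get matching lower bounds, reducing everything to the regime of genuinely small (bounded) $x/\sqrt t$ where the coupling with Brownian motion and Uchiyama's theorem apply most directly. The remaining steps---dominated convergence, uniform integrability of $W(a+S(m),b+m)$ on $\{T_{a,b}>m\}$, and the negligibility of the boundary layer---are routine given Corollary~\ref{cor:UBound} and the monotonicity and growth of $W$ from Theorem~\ref{thm:space-time-h}.
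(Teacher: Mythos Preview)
Your strategy matches the paper's: split at an intermediate time, invoke the functional CLT together with Uchiyama's Brownian asymptotics to get $\P_{y,k}(T>n-k)\approx\varkappa V_{p(c)}(y,k)/n^{p(c)/2}$ on a good set of $(y,k)$, replace $V_{p(c)}$ by $W$ via the equivalence in Theorem~\ref{thm:space-time-h}, and collapse the expectation using harmonicity~\eqref{eq:sth1}. The paper executes this not at a deterministic time $m$ but at the random time
\[
\nu_m=\inf\{k\ge1:S(k)-g_{a,b}(k)>\sqrt m\}\wedge m,
\]
and this is not merely cosmetic: at $\nu_m$ (when $\nu_m<m$) the overshoot above level $\sqrt m$ is controlled by the single increment $X_{\nu_m}$, so the ``far from boundary'' error term reduces to a moment bound on one step of the walk (Lemma~\ref{lem:W-tail}).

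Your handling of that same error term contains a genuine slip. The sequence $W(a+S(m),b+m)\ind\{T_{a,b}>m\}$ is a nonnegative martingale with constant mean $W(a,b)>0$; since $T_{a,b}<\infty$ a.s.\ it converges to $0$ a.s., hence it is \emph{not} uniformly integrable, contrary to what you assert. What is actually needed is the specific tail bound
\[
\E\bigl[W(a+S(m),b+m);\,T_{a,b}>m,\ a+S(m)>\mu_n\sqrt n\bigr]=o(1),
\]
which is true but requires an argument (e.g.\ a one-big-jump decomposition of $S(m)$) rather than an appeal to UI. The paper's $\nu_m$ device is precisely the trick that makes this estimate immediate, and is the one idea missing from your outline.
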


Comparing this with \eqref{eq:GP} we see that our result covers boundaries $g_{a,b}$ for all possible values of $c$. Furthermore, we show that a slowly varying function $L_g$ can be replaced by a 'constant' $W(a,b)$. A certain weakness of our result consists in the moment condition 
$\E|X|^{2+\delta}$, which is relevant in the case
$p(c)\le 2$. Greenwood and Perkins impose a milder assumption $\E X^2\log(1+|X|)<\infty$. 

We conclude the introduction by discussing the moment assumption 
$\E|X|^{p(c)}<\infty$. We assume that $c$ is such that $p(c)>2$. 
The following, rather standard, example shows that that moment condition is the minimal one for such values of $c$. Assume that 
\begin{equation}
\label{eq:heavy-tail}
\P(X>t)\ge \theta_0t^{-\beta},\quad t\ge1
\end{equation}
with some $\theta_0>0$ and some $\beta\in(2,p(c))$. Clearly, $\E|X|^{p(c)}$
is then infinite. It is clear that 
$$
\P(T_{a,b}>n)\ge \P(X_1>A\sqrt{n})
\P\left(\min_{k<n}S(k)>\max_{k\le n}g_{a,b}(k)-A\sqrt{n}\right)
$$
for every $A>0$. It follows from \eqref{eq:g-assump.a.b} that 
$$
\frac{1}{\sqrt{n}}\max_{k\le n}g_{a,b}(k)\to c.
$$
Therefore, by the functional central limit theorem,
$$
\P\left(\min_{k<n}S(k)>\max_{k\le n}g_{a,b}(k)-A\sqrt{n}\right)
\to\P\left(\min_{s\le1}B(s)>c-A\right)
$$
and the probability on the right hand side is positive for all $A>c$.
Taking $A=c+1$, we infer that there exists a constant $\theta_1$
such that 
$$
\P(T_{a,b}>n)\ge \theta_1 n^{-\beta/2}.
$$
Thus, the statement of Theorem~\ref{thm:tail} can not hold for walks satisfying \eqref{eq:heavy-tail}.
As we have mentioned before, this extra moment condition is not needed in the case of two-sided boundaries, which can be explained as follows. The second half of the boundary prevents appearance of big jumps as it was described above. Thus, one can think of a random walk with truncated increments.
\section{Properties of the functions \texorpdfstring{$V_p$}{Vp}}\label{sec:Vp} 

We start by collecting some simple but quite useful analytical properties of the functions $V_p(x,t)$. 

The function $ \psi_p(x)=e^{x^2/4}D_p(x)$ is defined for all  real $x$ and $p$.  
For $p<0$ this function admits  the following integral representation 
\begin{equation*}
\psi_p(x) = \frac{1}{\Gamma(-p)} \int\limits_0^{+\infty} \exp\left(-x s - \frac{s^2}{2} \right) s^{-p - 1} ds>0.
\end{equation*}
This implies immediately that $\psi_p$ has no real roots in the case $p<0$. 
It is also known that $\psi_p$ satisfies the following recurrence relation:
\begin{equation}
\label{eq:psi-recur}
\psi_p(x) = x\psi_{p-1}(x) + (1 -p) \psi_{p-2}(x).
\end{equation}
This relation allows one to get an integral representation for $\psi_p$
also in the case $p\ge 0$.

The derivative of $\psi_p$ satisfies
\begin{equation}
\label{eq:psi-prime}
\psi_p'(x)=p\psi_{p-1}(x).
\end{equation}
Recalling definitions \eqref{eq:Vdef.1} and \eqref{eq:Vdef.2} of functions $V_p$ and using \eqref{eq:psi-prime},
we obtain
\begin{equation}
\label{eq:V-deriv}
\frac{\partial}{\partial x}V_p(x,t)=pV_{p-1}(x,t).
\end{equation} 
Combining \eqref{eq:psi-recur} and \eqref{eq:psi-prime}, one gets easily the relation
\begin{equation}
\label{eq:V-2deriv}
\frac{\partial}{\partial t}V_p(x,t)
=-\frac{1}{2}\frac{\partial^2}{\partial x^2}V_p(x,t)
=-\frac{1}{2}p(p-1)V_{p-2}(x,t).
\end{equation}
It is also known that 
\begin{equation}
\label{p0}
\psi_p(x)\sim x^p\quad\text{as }x\to\infty.
\end{equation}
 All  properties presented above may be found in \cite{Atlas}, see also appendices in \cite{Novikov81} and  \cite{U80}.

The following properties of $V_p$ are proven in \cite{DHSW22}.
\begin{lemma}
\label{lem:V-prop}
It holds that
\begin{equation}
\label{p11}
V_p(x,t)\text{ is increasing in }x\in(c\sqrt{t},\infty)
\quad\text{for all}\quad  p\in[0,p(c)]
\end{equation}
and
\begin{align}                                                   \label{p12}
V_{p(c)}(c\sqrt t,t)=0<V_{p(c)}(x,t)\quad\text{for all}\quad 
x>c\sqrt t.
\end{align}
In addition, for all $p\in\R$,
\begin{equation}
\label{p9} 
\sup_{x>z\sqrt{t}}\left|V_p(x,t)/ x^p-1\right|\to0\quad\text{as}\quad z\to\infty. 
\end{equation}
Furthermore, for each $\gamma>0$ there exists a finite positive constant $C_0(\gamma)=C_0(\gamma, c)$ such that
\begin{equation}
\label{p8} 
C_0(\gamma)(x-c\sqrt{t})^{p(c)}\ge V_{p(c)}(x,t)\ge
(x-c\sqrt{t})^{p(c)}/C_0(\gamma) 
\end{equation}
for all $x,t,c$ and $\gamma$ satisfying conditions:
\begin{equation}
\label{p7} 
x-c\sqrt{t}>\gamma\sqrt{t}\quad\text{with}\quad t\ge0. 
\end{equation}
\end{lemma}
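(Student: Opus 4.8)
The plan is to reduce every assertion, via the scaling identity $V_p(x,t)=t^{p/2}\psi_p(x/\sqrt t)$ for $t>0$ and $V_p(x,0)=x^p$ for $x>0$, to a statement about the one-variable functions $\psi_p$ on the half-line $(c,\infty)$. The only analytic inputs are the derivative relation \eqref{eq:psi-prime}, the integral representation of $\psi_p$ valid for $p<0$ (which shows $\psi_p>0$ on all of $\R$ when $p<0$), the asymptotics \eqref{p0}, and the fact that $p\mapsto\psi_p(x)$ is continuous for each fixed $x$ (in fact entire in $p$; see \cite{Atlas}).

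For \eqref{p11} and \eqref{p12} I would prove by induction on $n=\lfloor p\rfloor$ the combined statement: $\psi_p$ is nondecreasing on $(c,\infty)$ for every $p\in[0,p(c)]$, and $\psi_p>0$ on $[c,\infty)$ for every $p\in[0,p(c))$. In the base band $p\in[0,1)\cap[0,p(c))$ one has $p-1<0$, so $\psi_{p-1}>0$ on $\R$ and hence $\psi_p'=p\psi_{p-1}\ge0$, giving monotonicity on all of $\R$; since $p\mapsto\psi_p(c)$ is continuous, equals $1$ at $p=0$ (because $\psi_0\equiv1$), and has no zero on $(0,p(c))$ by the minimality of $p(c)$, it is strictly positive on $[0,p(c))$, and combined with monotonicity this yields $\psi_p\ge\psi_p(c)>0$ on $[c,\infty)$. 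In the inductive step ($n\ge1$, $p\in[n,n+1)\cap[0,p(c))$) the exponent $p-1$ lies in $[n-1,n)\subset[0,p(c))$, so $\psi_{p-1}>0$ on $(c,\infty)$ by the induction hypothesis; hence $\psi_p'\ge0$ there, and again $\psi_p\ge\psi_p(c)>0$ on $[c,\infty)$. Finally, at $p=p(c)$ itself, $\psi_{p(c)}(c)=0$ by definition while $\psi_{p(c)}'=p(c)\psi_{p(c)-1}>0$ strictly on $(c,\infty)$ — the exponent $p(c)-1$ being either negative (hence $\psi_{p(c)-1}>0$ on $\R$) or in $[0,p(c))$ (hence $\psi_{p(c)-1}>0$ on $(c,\infty)$ by the induction) — so $\psi_{p(c)}$ is strictly increasing on $(c,\infty)$ and therefore $\psi_{p(c)}(y)>\psi_{p(c)}(c)=0$ for all $y>c$. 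This is exactly \eqref{p11}--\eqref{p12}.

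Claims \eqref{p9} and \eqref{p8} then follow quickly. For \eqref{p9}, writing $y=x/\sqrt t$ when $t>0$ gives $V_p(x,t)/x^p=\psi_p(y)/y^p$, while for $t=0$ this ratio is identically $1$; hence
\[
\sup_{x>z\sqrt t}\bigl|V_p(x,t)/x^p-1\bigr|=\sup_{y>z}\bigl|\psi_p(y)/y^p-1\bigr|\longrightarrow0\qquad\text{as }z\to\infty
\]
by \eqref{p0}. For \eqref{p8}, fix $\gamma>0$ and assume \eqref{p7}; the case $t=0$ reduces to $1\le C_0(\gamma)$, and for $t>0$ set $y=x/\sqrt t>c+\gamma$, so that $V_{p(c)}(x,t)=t^{p(c)/2}\psi_{p(c)}(y)$ and $(x-c\sqrt t)^{p(c)}=t^{p(c)/2}(y-c)^{p(c)}$. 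Thus \eqref{p8} is equivalent to bounding $h(y):=\psi_{p(c)}(y)/(y-c)^{p(c)}$ above and away from $0$ on $[c+\gamma,\infty)$. But $h$ is continuous and strictly positive there (numerator positive by \eqref{p12}, denominator $\ge\gamma^{p(c)}>0$) and $h(y)\to1$ as $y\to\infty$ by \eqref{p0} together with $(y-c)^{p(c)}\sim y^{p(c)}$; a continuous, strictly positive function on a closed half-line with a positive finite limit at infinity is bounded above and bounded away from $0$, so one may take $C_0(\gamma)=\max\{\sup_{y\ge c+\gamma}h(y),\ 1/\inf_{y\ge c+\gamma}h(y),\ 1\}$, which depends only on $\gamma$ and $c$.

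The one genuinely delicate point is the sign analysis in the induction for \eqref{p11}--\eqref{p12}: the derivative identity \eqref{eq:psi-prime} transports monotonicity and positivity downward through the integer bands of $p$ only once one knows that $\psi_p(c)>0$ for every $p\in(0,p(c))$ (equivalently, that $\psi_p$ acquires no zero in $[c,\infty)$ before $p$ reaches $p(c)$) and, at the top of the range, the precise behaviour of $\psi_{p(c)}$ immediately to the right of its root. This rests squarely on $p(c)$ being the \emph{minimal} positive root of $p\mapsto\psi_p(c)$ together with continuity of $\psi_p(c)$ in $p$; excluding an earlier sign change is the crux. Everything else is bookkeeping with the scaling relation and the asymptotics \eqref{p0}.
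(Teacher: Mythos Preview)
The paper does not actually prove this lemma --- it states just before the lemma that these properties ``are proven in \cite{DHSW22}'' and moves on without argument. Your self-contained proof is correct and supplies what the paper omits: the induction on $\lfloor p\rfloor$ via $\psi_p'=p\psi_{p-1}$, combined with continuity in $p$ and the minimality of $p(c)$ to guarantee $\psi_p(c)>0$ throughout $[0,p(c))$, cleanly yields \eqref{p11}--\eqref{p12}; and the scaling reduction together with \eqref{p0} handles \eqref{p9} and \eqref{p8} exactly as you describe. There is nothing in the present paper to compare against.
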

Since $c\mapsto p(c)$ is monotone increasing, we can invert this 
mapping. The inverse mapping will be denoted by $p\to c(p)$. It is then clear that $c(p)$ is the largest zero of the function $\psi_p(z)$.

\begin{lemma}
\label{lem:V-boundary}
For every $p>0$ there exists a constant $C=C(p)>0$ such that for $x>c(p)\sqrt{t}$ 
one has 
\begin{equation}\label{eq:derivative.and.v}
C^{-1} {V_{p-1}(x,t)}{(x-c(p)\sqrt{t})}
\le V_{p}(x,t)\le C {V_{p-1}(x,t)}{(x-c(p)\sqrt{t})}.
\end{equation}
If $p>1$ there exists a constant $A(p)$ such that 
$$
\frac{1}{A(p)}x^{p-1}(x-c(p)\sqrt{t})
\le V_p(x,t)\le 
A(p)x^{p-1} (x-c(p)\sqrt{t})
$$
for all $x\ge c(p)\sqrt{t}$.
\end{lemma}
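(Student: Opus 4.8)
The plan is to reduce the lemma to one-variable comparisons for the functions $\psi_p$, using the scaling in \eqref{eq:Vdef.2} and \eqref{eq:Vdef.1}. For $t>0$ put $z=x/\sqrt t$; then $V_p(x,t)=t^{p/2}\psi_p(z)$, $V_{p-1}(x,t)=t^{(p-1)/2}\psi_{p-1}(z)$ and $x-c(p)\sqrt t=t^{1/2}(z-c(p))$, so that
\[
\frac{V_p(x,t)}{V_{p-1}(x,t)\,\bigl(x-c(p)\sqrt t\bigr)}
=\frac{\psi_p(z)}{\psi_{p-1}(z)\,(z-c(p))}=:\phi(z),\qquad z>c(p),
\]
while for $t=0$ one has the exact identity $V_p(x,0)=x^p=x^{p-1}\cdot x$. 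Thus the first display of the lemma is equivalent to the assertion that $\phi$ is bounded away from $0$ and from $\infty$ on $(c(p),\infty)$.

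I would prove this by continuity and compactness. The function $\psi_p$ is smooth; $\psi_p(z)>0$ for $z>c(p)$, since $c(p)$ is the largest zero of $\psi_p$ and $\psi_p(z)\to+\infty$ by \eqref{p0}; and $\psi_{p-1}(z)>0$ for all $z\ge c(p)$, because when $p>1$ the largest zero $c(p-1)$ of $\psi_{p-1}$ is strictly smaller than $c(p)$ (monotonicity of $p\mapsto c(p)$), while for $p\le1$ the function $\psi_{p-1}$ is positive everywhere. Hence $\phi$ is continuous and strictly positive on $(c(p),\infty)$. A first-order Taylor expansion at $c(p)$, using $\psi_p(c(p))=0$, $\psi_p'=p\psi_{p-1}$ from \eqref{eq:psi-prime} and $\psi_{p-1}(c(p))>0$, gives $\phi(z)\to p$ as $z\downarrow c(p)$, and $\phi(z)\to1$ as $z\to\infty$ by \eqref{p0}. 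Consequently $\phi$ extends to a continuous, strictly positive function on the compactified interval $[c(p),+\infty]$, so that $0<\inf\phi\le\sup\phi<\infty$; one then takes $C=\max\{\sup\phi,(\inf\phi)^{-1},1\}$. (Alternatively one may start from $\psi_p(z)=p\int_{c(p)}^z\psi_{p-1}(s)\,ds$, which makes the upper bound transparent in the range $p\ge1$, where $\psi_{p-1}$ is increasing.)

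For the second display assume $p>1$; then $c(p)>0$, since $\psi_1(z)=z$ forces $c(1)=0$ and $c(\cdot)$ is increasing. By the part just proved it suffices to show that $V_{p-1}(x,t)$ is comparable, up to multiplicative constants, to $x^{p-1}$ whenever $x\ge c(p)\sqrt t$; after the same scaling this amounts to $\psi_{p-1}(z)/z^{p-1}$ being bounded away from $0$ and $\infty$ on $[c(p),\infty)$. That ratio is continuous and strictly positive there (here we use $c(p)>0$, so $z^{p-1}>0$, together with $\psi_{p-1}>0$) and tends to $1$ as $z\to\infty$ by \eqref{p0}; equivalently, \eqref{p9} controls the tail and continuity on a compact interval the remaining range. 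Multiplying the two comparisons gives the bound with some $A(p)$.

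The whole argument is a compactness-plus-asymptotics exercise, and the one genuinely delicate point is the behaviour at the left endpoint $z=c(p)$, where the numerator and denominator of $\phi$ both vanish; handling it cleanly requires the relation $\psi_p'=p\psi_{p-1}$ together with the non-degeneracy $\psi_{p-1}(c(p))\neq0$, which rests on the strict monotonicity of $p\mapsto c(p)$ --- equivalently, on the fact that $\psi_{p-1}$ and $\psi_p$ do not share their largest zero. Uniformity of the constants in $t$ is automatic, because $t$ cancels after the scaling reduction, and the case $t=0$ is dealt with directly as above.
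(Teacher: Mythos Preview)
Your argument is correct and takes essentially the same route as the paper: reduce to one variable via the scaling $z=x/\sqrt t$, use the asymptotics $\psi_q(z)\sim z^q$ at infinity, and the derivative relation $\psi_p'=p\psi_{p-1}$ (equivalently, the mean value theorem in the paper) together with $\psi_{p-1}(c(p))>0$ near the boundary $z=c(p)$. Your packaging via the single ratio $\phi(z)$ and compactification of $[c(p),\infty]$ is a bit cleaner than the paper's explicit two-region split, but the mathematical content is the same.
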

\begin{proof}
Since $\psi_p(x)\sim x^p,x\to \infty$ for all $p$, there exists sufficiently large  $A$ such that \eqref{eq:derivative.and.v} holds for $x>A\sqrt t$. 

Consider now the case $x\in (c(p)\sqrt{t}, A\sqrt{t})$. By the mean value theorem,
\begin{align*}
V_{p}(x,t)
&=V_{p}(x,t)-V_{p}(c(p)\sqrt{t},t)\\
&=\frac{\partial}{\partial \theta} V_{p}(\theta,t)
(x-c(p)\sqrt{t})
= p V_{p-1}(\theta,t) (x-c(p)\sqrt{t}) 
\end{align*}
for some $\theta \in (c(p)\sqrt{t}, x)$.

By the definition of $V_{p-1}$,
$$
V_{p-1}(y,t)=t^{(p-1)/2}\psi_{p-1}(y/\sqrt{t}).
$$
Noting that $1/C(p,A)<\psi_{p-1}(u)<C(p,A)$ for all $u\in[c(p),A]$,
we conclude that the ratio $V_{p-1}(y,t)/V_{p-1}(x,t)$
is bounded below and above by some positive constants, which depend 
on $A$ and $p$ only. This completes the proof of \eqref{eq:derivative.and.v}.

To prove the second claim we notice that $c(p-1)<c(p)$. This allows us to apply \eqref{p8} to the function $V_{p-1}$. 
This gives us the estimates 
$$
\frac{1}{A_1(p)}(x-c(p-1)\sqrt{t})^{p-1}
\le V_{p-1}(x,t)\le
A_1(p)(x-c(p-1)\sqrt{t})^{p-1}
$$
for $x>c(p)\sqrt t$. 
For all $x\ge c(p)\sqrt t $
one has
$$
x-\frac{c(p-1)^+}{c(p)}x\le x-c(p-1)\sqrt{t}\le x+\frac{c(p-1)^-}{c(p)}x,
$$
where as usual $x^+=\max(x,0)$ and $x^-=\max(-x,0)$. 
Using these estimates we obtain 
$$
\frac{1}{A_2(p)}x^{p-1}
\le V_{p-1}(x,t)\le
A_2(p)x^{p-1}.
$$


Thus, the proof is complete.
\end{proof}

\begin{lemma}\label{lem:V-boundary-negative}
	For every $q<0$ and every $T>0$ there exists a constant $A>0$ such that 
	\[V_q(x,t)\leq A(x+T\sqrt{t})^q\]
	for all $x>-T\sqrt{t}$.
\end{lemma}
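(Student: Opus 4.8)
The plan is to reduce the bound on $V_q$ with $q<0$ to the integral representation of $\psi_q$ available for negative indices, combined with a substitution that accounts for the time variable. Recall that for $q<0$ we have the representation
$$
\psi_q(u)=\frac{1}{\Gamma(-q)}\int_0^{+\infty}\exp\Bigl(-us-\frac{s^2}{2}\Bigr)s^{-q-1}\,ds,
$$
which is valid for every real $u$; in particular $\psi_q(u)>0$ everywhere and, by~\eqref{p0}, $\psi_q(u)\sim u^q$ as $u\to\infty$. So the content of the lemma is an estimate that remains uniform as $u=x/\sqrt{t}$ ranges over the \emph{whole} half-line $(-T,\infty)$, not merely over large $u$.

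First I would dispose of the region $x\ge\sqrt{t}$ (equivalently $u\ge1$). There, by~\eqref{p9}, the ratio $V_q(x,t)/x^q=\psi_q(u)/u^q$ is bounded above by a constant depending only on $q$, and since $x+T\sqrt t\le(1+T)x$ on this region, we get $V_q(x,t)\le A_1 x^q\le A_1(1+T)^{-q}(x+T\sqrt t)^q$ after also using $q<0$ so that $x^q\le(1+T)^{-q}(x+T\sqrt t)^q$; wait — one must be careful with the direction of the inequality when $q<0$. Since $x\mapsto x^q$ is decreasing for $q<0$ and $x\le x+T\sqrt t$, we have $x^q\ge(x+T\sqrt t)^q$, which is the wrong direction. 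The correct comparison on $u\ge1$ is instead $x+T\sqrt t\le(1+T)x$, hence $(x+T\sqrt t)^q\ge(1+T)^q x^q$, i.e. $x^q\le(1+T)^{-q}(x+T\sqrt t)^q$; so in fact $V_q(x,t)=t^{q/2}\psi_q(u)\le t^{q/2}A_1 u^q=A_1 x^q\le A_1(1+T)^{-q}(x+T\sqrt t)^q$, which is exactly what is needed. So the large-$u$ region is routine.

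The main work, and the main obstacle, is the bounded region $-T<u\le1$, equivalently $-T\sqrt t<x\le\sqrt t$. Here $x+T\sqrt t$ can be arbitrarily small (when $x$ is near $-T\sqrt t$), so the target bound $A(x+T\sqrt t)^q$ blows up, and we must show $V_q(x,t)=t^{q/2}\psi_q(u)$ blows up at least as fast. Write $v=u+T\ge0$, so $x+T\sqrt t=v\sqrt t$ and the claim on this region becomes $\psi_q(v-T)\le A v^q$ for $0\le v\le1+T$, i.e. $\psi_q$ with a shifted argument must be dominated by $v^q$ near $v=0$. From the integral representation, $\psi_q(v-T)=\frac{1}{\Gamma(-q)}\int_0^\infty e^{-(v-T)s-s^2/2}s^{-q-1}\,ds=\frac{e^{Ts}}{\dots}$ — more precisely, $\psi_q(v-T)=\frac{1}{\Gamma(-q)}\int_0^\infty e^{Ts}e^{-vs}e^{-s^2/2}s^{-q-1}\,ds$; since $e^{Ts}e^{-s^2/2}\le C_T$ uniformly in $s\ge0$ (for fixed $T$), this is at most $\frac{C_T}{\Gamma(-q)}\int_0^\infty e^{-vs}s^{-q-1}\,ds=\frac{C_T}{\Gamma(-q)}\,\Gamma(-q)\,v^{q}=C_T v^q$ by the standard Gamma integral $\int_0^\infty e^{-vs}s^{-q-1}\,ds=\Gamma(-q)v^q$ for $v>0$, $q<0$. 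This gives $\psi_q(v-T)\le C_T v^q$ for all $v>0$, hence in particular on the region of interest, and therefore $V_q(x,t)=t^{q/2}\psi_q(u)\le C_T t^{q/2}v^q=C_T(v\sqrt t)^q=C_T(x+T\sqrt t)^q$. Taking $A=\max\{A_1(1+T)^{-q},\,C_T\}$ completes the argument; the case $t=0$ is immediate from~\eqref{eq:Vdef.1}. The only delicate point is the uniform bound $e^{Ts-s^2/2}\le C_T$, which is obvious since the exponent is a downward parabola in $s$, maximized at $s=T$.
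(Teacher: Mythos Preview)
Your proof is correct. The one sentence of commentary ``we must show $V_q(x,t)$ blows up at least as fast'' is backwards --- since $(x+T\sqrt t)^q\to\infty$ as $x\downarrow -T\sqrt t$ and you need an \emph{upper} bound, nothing on the left need blow up at all --- but the inequality you actually prove, $\psi_q(v-T)\le C_T v^q$ via the Gamma integral, is in the right direction and the computation is clean.

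Your route differs from the paper's. The paper argues qualitatively: it notes that $\psi_q$ is decreasing (by differentiating the integral representation), hence bounded on $[-T,R]$; combined with $\psi_q(z)\sim z^q$ at infinity this yields $\psi_q(z)\le C_0\bigl(\ind_{[-T,R)}(z)+z^q\ind_{[R,\infty)}(z)\bigr)$, and then $t^{q/2}$ and $x^q$ are separately compared with $(x+T\sqrt t)^q$ on the two regions. You instead extract the uniform factor $e^{Ts-s^2/2}\le e^{T^2/2}$ from the integrand and reduce to the exact identity $\int_0^\infty e^{-vs}s^{-q-1}\,ds=\Gamma(-q)\,v^q$. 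This is more direct, and in fact the resulting bound $\psi_q(u)\le e^{T^2/2}(u+T)^q$ holds for \emph{every} $u>-T$, so your preliminary treatment of the region $u\ge1$ is not even needed --- the integral argument alone proves the lemma. The paper's proof genuinely requires its case split, since on the compact piece it only uses boundedness of $\psi_q$ rather than the precise power.
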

\begin{proof}
	By the definition,
	\[V_q(x,t)=t^{q/2}\psi_q\Big(\frac{x}{\sqrt{t}}\Big)\]
	and
	\[\psi_q(z)=\frac{1}{\Gamma(-q)}\int_0^\infty \exp\Big(-zs-\frac{s^2}{2}\Big)s^{-q-1}ds.\]
	It is then clear that
	\[\frac{d}{dz}\psi_q(z)=-\frac{1}{\Gamma(-q)}\int_0^\infty\exp\Big(-zs-\frac{s^2}{2}\Big)s^{-q}ds<0.\]
	Thus, $\psi_q(z)$ is decreasing.
	Recall that $\psi_q(z)\sim z^q$ as $z\to\infty$. Therefore, there are constants $R$ and $C_0$ such that
	\[\psi_q(z)\leq C_0\big(\ind_{[-T,R)}(z)+z^q\ind_{[R,\infty)}(z)\big)\]
	and consequently,
	\begin{equation}\label{eq:V-boundary-negative1}
		V_q(x,t)\leq C_0\left(t^{q/2}\ind_{[-T\sqrt{t},R\sqrt{t})}(x)+x^q\ind_{[R\sqrt{t},\infty)}(x)\right).
	\end{equation}
	If $x\leq R\sqrt{t}$ then $(x+T\sqrt{t})^q\geq (R+T)^q\cdot t^{q/2}$. Furthermore,
	for $t>R\sqrt{t}$ we have $(x+T\sqrt{t})^q\geq \left(1+\frac TR\right)^qx^q$. Combining this with \eqref{eq:V-boundary-negative1}, we get the desired estimate.
\end{proof}

\begin{lemma}\label{lem:V-boundary-01}
	For every $p\in(0,1)$ there exists $A(p)$ such that \[V_p(x,t)\leq A(p)(x-c(p)\sqrt{t})^p\]
	for all $x\geq c(p)\sqrt{t}$, $t>0$.
\end{lemma}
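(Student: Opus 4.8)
The plan is to reduce the statement to a one-variable estimate for $\psi_p$, in the spirit of the proof of Lemma~\ref{lem:V-boundary-negative}. Since $V_p(x,t)=t^{p/2}\psi_p(x/\sqrt t)$ and $x-c(p)\sqrt t=\sqrt t\,(x/\sqrt t-c(p))$, the asserted inequality for all $t>0$ and $x\ge c(p)\sqrt t$ is equivalent to
$$
\psi_p(z)\le A(p)\,(z-c(p))^p\qquad\text{for all }z\ge c(p).
$$
Here $c(p)$ is the largest zero of $\psi_p$, so $\psi_p(c(p))=0$, and $\psi_p(z)\sim z^p$ as $z\to\infty$ by \eqref{p0}; in particular $\psi_p>0$ on $(c(p),\infty)$, so the inequality is meaningful.

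Next I would establish a global linear bound for $\psi_p$ above the boundary. By \eqref{eq:psi-prime} one has $\psi_p'=p\psi_{p-1}$, and since $p-1\in(-1,0)$ the function $\psi_{p-1}$ admits the integral representation used in the proof of Lemma~\ref{lem:V-boundary-negative}, which shows that $\psi_{p-1}$ is positive and strictly decreasing on $\R$. Hence $0<\psi_p'(u)\le p\,\psi_{p-1}(c(p))$ for every $u\ge c(p)$, and integrating from $c(p)$ yields, with $K_1:=p\,\psi_{p-1}(c(p))$,
$$
\psi_p(z)\le K_1\,(z-c(p))\qquad\text{for all }z\ge c(p).
$$
For $z\in[c(p),c(p)+1]$ this already gives $\psi_p(z)\le K_1(z-c(p))^p$, because $z-c(p)\le(z-c(p))^p$ whenever $0\le z-c(p)\le1$ and $p<1$.

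It then remains to treat the range $z\ge c(p)+1$, where $(z-c(p))^p\ge1$. On this half-line the function $z\mapsto\psi_p(z)/(z-c(p))^p$ is continuous and tends to $1$ as $z\to\infty$, since both $\psi_p(z)$ and $(z-c(p))^p$ are asymptotically equivalent to $z^p$; consequently it is bounded, say by $K_2$, on $[c(p)+1,\infty)$. Taking $A(p):=\max(K_1,K_2)$ then gives the one-variable estimate, and undoing the scaling delivers the lemma. The computation is entirely elementary; the only point requiring attention is the behaviour near $z=c(p)$, where $\psi_p$ vanishes only to first order, so that its linear decay has to be compared with $(z-c(p))^p$ using $p<1$ — and for precisely this reason no lower bound of the same form can hold near the boundary, which is why the lemma states only an upper estimate.
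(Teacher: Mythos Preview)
Your proof is correct and follows essentially the same route as the paper: both arguments use that $\psi_{p-1}$ (equivalently $V_{p-1}$) is positive and decreasing to get a linear bound $\psi_p(z)\le K_1(z-c(p))$ near the boundary, convert this to the desired $p$-th power bound using $p<1$, and invoke the asymptotic $\psi_p(z)\sim z^p$ for the far range. The only cosmetic difference is that you scale to one variable at the outset, which lets you work directly with $\psi_p$ rather than routing through Lemmas~\ref{lem:V-boundary} and~\ref{lem:V-boundary-negative}; this makes the near/far split a bit more explicit than in the paper, but the underlying mechanism is identical.
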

\begin{proof}
 Using~\eqref{eq:derivative.and.v} and the fact that 
  $V_{p-1}(x,t)$ is decreasing in $x$ we obtain 
	\[V_p(x,t)\leq A(p) V_{p-1}(c(p)\sqrt{t},t)\cdot(x-c(p)\sqrt{t}).\]
	Applying Lemma \ref{lem:V-boundary-negative} to $V_{p-1}$, we get
	\[V_p(x,t)\leq A_0(p)t^{(p-1)/2}(x-c(p)\sqrt{t})=A_0(p)(x-c(p)\sqrt{t})^p\Big(\frac{x-c(p)\sqrt{t}}{\sqrt{t}}\Big)^{1-p}.\]
 Noting that $\Big(\frac{x-c(p)\sqrt{t}}{\sqrt{t}}\Big)^{1-p}\le 1$
 for $x\in[c(p)\sqrt{t},(c(p)+1)\sqrt{t}]$ we complete the proof.
\end{proof}
\section{Constructions of supermartingales} 
Our constructions of supermartingales for discrete time random walks will use functions $V_p$ which are martingales for the Brownian motion.  Set 
$$
\overline{V}_p(x,t)=\left\{ 
\begin{array}{rl}
V_p(x,t), &x\ge c(p)\sqrt{t},\\
0, &x< c(p)\sqrt{t}.
\end{array}
\right.
$$

In this section we will use an approach which is similar to the method suggested by McConnell~\cite{MC84}, who has constructed supermartingales and derived bounds for exit times from cones by multidimensional walks. 

We first bound the mean drift of the process $\overline{V}_p(a+S(n),b+n)$ for $p>1$ and then for $p\in (0,1)$.
\begin{lemma}[$p>1$]
\label{lem:remainder}

Assume that $\e|X|^{p}<\infty$ and $\e|X|^{2+\delta}<\infty$
for some $\delta>0$.
If $p> 2$ then there exists a constant $C$ such that, for all $x>c(p)\sqrt{t}$,
$$
\left|\e \overline{V}_p(x+X,t+1)-\overline{V}_p(x,t)\right|
\le C\bigg(\frac{(1+x)^{p-1}}{(1+x-c(p)\sqrt{t})^{p-1}}+(1+x)^{p-3}\bigg). 
$$
If $p\in (1,2]$ then there exists a constant $C$ such that, for all $x>c(p)\sqrt{t}$,
$$
\left|\e \overline{V}_p(x+X,t+1)-\overline{V}_p(x,t)\right|
\le C\bigg(\frac{(1+x)^{p-1}}{(1+x-c(p)\sqrt{t})^{1+\delta}}\bigg). 
$$
\end{lemma}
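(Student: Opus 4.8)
The plan is to Taylor-expand $\overline{V}_p(x+X,t+1)$ in both arguments around $(x,t)$, use the heat equation \eqref{eq:V-2deriv} to cancel the first-order-in-$t$ term against the second-order-in-$x$ term, and then control the remainder using the moment assumptions together with the size estimates for $V_p$ and its derivatives coming from Lemma~\ref{lem:V-boundary}. Concretely, write
$$
\overline{V}_p(x+X,t+1)-\overline{V}_p(x,t)
=\Big(V_p(x+X,t+1)-V_p(x,t)\Big)\ind\{x+X\ge c(p)\sqrt{t+1}\}
-V_p(x,t)\ind\{x+X<c(p)\sqrt{t+1}\},
$$
valid on $\{x\ge c(p)\sqrt{t}\}$ up to the harmless discrepancy between the thresholds $c(p)\sqrt{t}$ and $c(p)\sqrt{t+1}$, which contributes only on an event of small probability and will be absorbed into the error terms. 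On the main event, by a second-order Taylor expansion in $x$ and a first-order expansion in $t$,
$$
V_p(x+X,t+1)-V_p(x,t)
=X\,\partial_xV_p(x,t)+\tfrac12 X^2\,\partial_x^2V_p(x,t)+\partial_tV_p(x,t)+R,
$$
and taking expectations kills the $X$-term ($\E X=0$), while $\E[\tfrac12 X^2\partial_x^2V_p]+\partial_tV_p=\tfrac12(\E X^2-1)\partial_x^2V_p=0$ by $\E X^2=1$ and the heat equation \eqref{eq:V-2deriv}. So the drift equals $\E R$ plus the two boundary-correction terms, and everything reduces to estimating these.

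The remainder $R$ splits according to whether the jump $X$ is small or large relative to the distance $x-c(p)\sqrt{t}$ to the boundary. For $|X|\le \tfrac12(1+x-c(p)\sqrt{t})$ (the "small jump" regime), the point $x+X$ stays at distance $\gtrsim 1+x-c(p)\sqrt{t}$ from the boundary and at distance $\gtrsim 1+x$ from the origin, so one can use Taylor's formula with integral remainder and bound $\partial_x^3V_p$ and the mixed/second $t$-derivatives on the segment by quantities of order $V_{p-3}$-type, i.e. by $(1+x)^{p-3}$ up to constants via \eqref{eq:V-deriv}, \eqref{eq:V-2deriv} and the polynomial asymptotics $\psi_p(z)\sim z^p$; integrating $|X|^3$ against the law of $X$ uses $\E|X|^{2+\delta}<\infty$ when $p\le 2$ (keeping only a $|X|^{2+\delta}$ power and paying the deficit with a negative power of the boundary distance, which yields the stated bound with exponent $1+\delta$ in the denominator) and $\E|X|^3\le\E|X|^p<\infty$ type control when $p>2$ after interpolation, giving the $(1+x)^{p-3}$ term. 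For $|X|>\tfrac12(1+x-c(p)\sqrt{t})$ (the "large jump" regime), one does not expand: one simply bounds $V_p(x+X,t+1)\le C(1+x+X^+)^{p-1}(1+X^+)$ (or similar) using the product estimate $V_p\le A(p)x^{p-1}(x-c(p)\sqrt{t})$ from Lemma~\ref{lem:V-boundary}, bounds $V_p(x,t)\le C(1+x)^{p-1}(1+x-c(p)\sqrt{t})$, and multiplies by the probability of the large-jump event, which by Markov/Chebyshev is at most $C\,\E|X|^p/(1+x-c(p)\sqrt{t})^p$; after arithmetic this produces the $(1+x)^{p-1}/(1+x-c(p)\sqrt{t})^{p-1}$ term (and in the range $p\in(1,2]$ the weaker power $(1+x-c(p)\sqrt{t})^{1+\delta}$ suffices since we only have $\E|X|^{2+\delta}$ to spend when $p<2+\delta$). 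The boundary-correction term $V_p(x,t)\P(x+X<c(p)\sqrt{t+1})$ is handled the same way as the large-jump term, since the event forces $X<-(x-c(p)\sqrt{t})+O(1)$.

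The main obstacle I anticipate is bookkeeping the interplay between the two "small parameters" — the reciprocal distance to the boundary $1/(1+x-c(p)\sqrt{t})$ and the reciprocal distance to infinity $1/(1+x)$ — so that the estimates for $\partial_x^k V_p$ are uniform in the whole region $x>c(p)\sqrt{t}$, including the delicate zone where $x$ is large but $x-c(p)\sqrt{t}$ is of order $1$ (so $t$ is large). There one cannot use the crude bound $V_p\asymp x^p$; one must use the sharper factorization $V_p\asymp x^{p-1}(x-c(p)\sqrt{t})$ and the analogous statements for derivatives, which is exactly what Lemma~\ref{lem:V-boundary} (applied to $V_p$, $V_{p-1}$, and with \eqref{eq:V-deriv}–\eqref{eq:V-2deriv} to pass to $V_{p-2}$, $V_{p-3}$) is designed to supply. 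Getting the case split at threshold $\tfrac12(1+x-c(p)\sqrt{t})$ to mesh cleanly with these derivative bounds, and separating the two regimes $p>2$ and $p\in(1,2]$ according to whether $\E|X|^3$ or only $\E|X|^{2+\delta}$ is available, is the part that requires care rather than ingenuity.
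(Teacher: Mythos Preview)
Your proposal is correct and follows essentially the same approach as the paper: split the increment into small jumps $|X|\le\frac12(x-c(p)\sqrt{t+1})$ and large jumps, use the Taylor expansion \eqref{eq:Taylor-repr} with the heat-equation cancellation on the small-jump part, and on the large-jump part bound $\overline{V}_p$ directly via Lemma~\ref{lem:V-boundary} combined with Chebyshev-type tail estimates on $X$. The only point where the paper is more explicit than your sketch is the further case distinction $p\ge3$ versus $p\in(2,3)$ for the truncated third moment $\E[|X|^3;|X|\le\cdot\,]$ (which is bounded in the first case but only $O((x-c(p)\sqrt{t})^{3-p})$ in the second), but your phrase ``after interpolation'' shows you are aware of this.
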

\begin{proof}
Both statements are obvious for $x\in(c(p)\sqrt{t},c(p)\sqrt{t+1}]$. From now on we assume that 
$x>c(p)\sqrt{t+1}$.

First we notice that
\begin{align*}
 &V_p(x+z,t+1)-V_p(x,t)\\
 &\hspace{1cm}=\left[V_p(x+z,t+1)-V_p(x+z,t)\right]
 +\left[V_p(x+z,t)-V_p(x,t)\right].
\end{align*}
Applying now the Taylor formula, we get the equalities
\begin{align*}
&V_p(x+z,t+1)-V_p(x+z,t)\\
&\hspace{1cm}=\frac{\partial}{\partial t}V_p(x+z,t)
+\frac{1}{2}\frac{\partial^2}{\partial t^2}V_p(x+z,t+\phi)\\
&\hspace{1cm}=\frac{\partial}{\partial t}V_p(x,t)
+z\frac{\partial^2}{\partial x\partial t}V_p(x+\psi z,t)
+\frac{1}{2}\frac{\partial^2}{\partial t^2}V_p(x+z,t+\phi)
\end{align*}
for some $\phi,\psi\in(0,1)$.  Similarly,
\begin{align*}
&V_p(x+z,t)-V_p(x,t)\\
&\hspace{1cm}=z\frac{\partial}{\partial x}V_p(x,t)
+z^2\frac{1}{2}\frac{\partial^2}{\partial x^2}V_p(x,t)
+z^3\frac{1}{6}\frac{\partial^3}{\partial x^3}V_p(x+\theta z,t)
\end{align*}
for some $\theta\in(0,1)$.
Using now \eqref{eq:V-deriv} and \eqref{eq:V-2deriv}, we arrive at the equality
\begin{align}
\label{eq:Taylor-repr}
\nonumber
&V_p(x+z,t+1)-V_p(x,t)\\
&=z pV_{p-1}(x,t)+\frac{z^2}{2}p(p-1)V_{p-2}(x,t)
-\frac{1}{2}p(p-1)V_{p-2}(x,t)+R_p(x,t;z),
		\end{align}
where 
\begin{align*}
R_p(x,t;z)
&=\frac{z^3}{6}p(p-1)(p-2)V_{p-3}(x+\theta z,t)\\
&\hspace{5mm}-\frac{z}{2}p(p-1)(p-2)V_{p-3}(x+\psi z,t)\\
&\hspace{5mm}+\frac{1}{8}p(p-1)(p-2)(p-3)V_{p-4}(x,t+\phi), \text{for }  \phi,\psi,\theta\in(0,1). 
\end{align*}
Using the equality \eqref{eq:Taylor-repr}, we have
\begin{align*}
&\hspace{-10mm}\e\left[\overline{V}_p(x+X,t+1)-\overline{V}_p(x,t); |X|\le(x-c(p)\sqrt{t+1})/2\right]\\
&\hspace{-5mm}=\e\left[V_p(x+X,t+1)-V_p(x,t); |X|\le(x-c(p)\sqrt{t+1})/2\right]\\
&\hspace{-5mm}=-\frac{1}{2}p(p-1)V_{p-2}(x,t)\P(|X|\le (x-c(p)\sqrt{t+1})/2)\\
&+pV_{p-1}(x,t)\e[X;|X|\le(x-c(p)\sqrt{t+1})/2]\\
&+\frac{1}{2}p(p-1)V_{p-2}(x,t)\e[X^2;|X|\le(x-c(p)\sqrt{t+1})/2]\\
&+\e[R_p(x,t;X);|X|\le(x-c(p)\sqrt{t+1})/2].
\end{align*}
Recall that $p>1$ is equivalent to $c(p)>0$.  
It follows from \eqref{p8} that 
$V_{p-1}(x,t)=O(x^{p-1})$ and $V_{p-2}(x,t)=O(x^{p-2})$
uniformly in $x>c(p)\sqrt{t}$.

To bound $\e[R_p(x,t;X);|X|\le (x-c(p)\sqrt{t})/2]$, we notice that, by \eqref{p8},
$
V_{p-3}(x+\theta z,t)=O((1+x)^{p-3})
$ and 
$
V_{p-4}(x,t+\phi)=O((1+x)^{p-4})
$
for $x>c(p)\sqrt{t+1}$. 
These estimates imply that 
\begin{align*}
&|\e[R_p(x,t;X);|X|\le (x-c(p)\sqrt{t+1})/2]|\\
&\hspace{1cm}
\le C\left((1+x)^{p-4}+(1+x)^{p-3}\e[|X|^3;|X|\le (x-c(p)\sqrt{t+1})/2]\right).
\end{align*}
Recalling that $\e X=0$ and $\e X^2=1$ and using once again \eqref{eq:V-2deriv}, we infer that 
\begin{align}
\label{eq:repr1}
\nonumber
&\left|\e\left[\overline{V}_p(x+X,t+1)-\overline{V}_p(x,t); |X|\le(x-c(p)\sqrt{t+1})/2\right]\right|\\
\nonumber
&\hspace{5mm}\le C\Big(x^{p-2}\P(|X|>(x-c(p)\sqrt{t+1})/2)\\
\nonumber
&\hspace{1cm}+x^{p-1}\e[X;|X|>(x-c(p)\sqrt{t+1})/2]\\
\nonumber
&\hspace{1cm}+x^{p-2}\e[X^2;|X|>(x-c(p)\sqrt{t+1})/2]\\
&\hspace{1cm}+(1+x)^{p-4}+(1+x)^{p-3}\e[|X|^3;|X|\le (x-c(p)\sqrt{t+1})/2]\Big).
\end{align}

It is immediate from \eqref{eq:V-2deriv} that $\overline{V}_p(x,t)$ increases in $x$
and decreases in $t$. This observation implies that 
$\overline{V}_p(x+X,t+1)\leq \overline{V}_p(x,t+1)\leq \overline{V}_p(x,t)$ for all $X<-(x-c(p)\sqrt{t+1})/2$. 
This implies that 
\begin{align*}
&\left|\e\left[\overline{V}_p(x+X,t+1)-\overline{V}_p(x,t); |X|>(x-c(p)\sqrt{t+1})/2\right]\right|\\
&\hspace{3mm}\le \e\left[\overline{V}_p(x+X,t+1);X>(x-c(p)\sqrt{t+1})/2\right]\\
&\hspace{1cm}
+\overline{V}_p(x,t)\pr(|X|>(x-c(p)\sqrt{t+1})/2).
\end{align*}
Using Lemma~\ref{lem:V-boundary},  
\begin{align}
&\left|\e\left[\overline{V}_p(x+X,t+1)-\overline{V}_p(x,t); |X|>(x-c(p)\sqrt{t+1})/2\right]\right|\notag\\
&\le A(p)\e\left[(x+X-c(p)\sqrt t)\cdot (x+X)^{p-1};X>(x-c(p)\sqrt{t+1})/2\right]\notag\\
&\qquad+A(p)(x-c(p)\sqrt{t})x^{p-1}\pr(|X|>(x-c(p)\sqrt{t+1})/2)\notag\\ 
&\le 3\cdot2^{p-1}A(p)\e\left[|X|(x^{p-1}+|X|^{p-1});X>(x-c(p)\sqrt{t+1})/2\right]\notag\\
&\qquad+2A(p)(x-c(p)\sqrt{t})x^{p-1}\pr(|X|>(x-c(p)\sqrt{t+1})/2)\notag\\
&\le C\Big(\e\left[|X|^p;X>(x-c(p)\sqrt{t+1})/2\right]+
x^{p-1}\e\left[|X|;X>(x-c(p)\sqrt{t+1})/2\right]\notag\\
&\qquad+(x-c(p)\sqrt{t})x^{p-1}\pr(|X|>(x-c(p)\sqrt{t+1})/2)\Big).
\label{eq:repr2}
\end{align}

Assume that $p>2$. 
Due to our moment assumption $\e|X|^p<\infty$,
\begin{align}
	\P(|X|>(x-c(p)\sqrt{t+1})/2)&\le C(1+x-c(p)\sqrt{t})^{-p},\label{eq:markovestim>1}
\end{align}
\begin{align}
	\e[|X|;|X|>(x-c(p)\sqrt{t+1})/2]&\le 
    C(1+x-c(p)\sqrt{t})^{1-p},\label{eq:markovestim>2}
\end{align}
\begin{align} 
	\e[X^2;|X|>(x-c(p)\sqrt{t+1})/2]&\le (1+x-c(p)\sqrt{t})^{2-p},\label{eq:markovestim>3}
 \end{align}
 and
\begin{align}
	\e[|X|^p;X>(x-c(p)\sqrt{t+1})/2]&\le C.\label{eq:markovestim>4}
\end{align}
If $p\ge 3$ then $\e[|X|^3;|X|\le (x-c(p)\sqrt{t})/2]$ is bounded. Plugging in \eqref{eq:markovestim>1} -- \eqref{eq:markovestim>4} into \eqref{eq:repr1} and \eqref{eq:repr2}, we thus obtain
\begin{align*}
	&\left|\e \overline{V}_p(x+X,t+1)-\overline{V}_p(x,t)\right|\\
	&\quad\leq\left|\e\left[\overline{V}_p(x+X,t+1)-\overline{V}_p(x,t); |X|\leq(x-c(p)\sqrt{t+1})/2\right]\right|\\
	&\qquad+\left|\e\left[\overline{V}_p(x+X,t+1)-\overline{V}_p(x,t); |X|>(x-c(p)\sqrt{t+1})/2\right]\right|\\
	&\quad\le C\Big(\frac{x^{p-2}}{(1+x-c(p)\sqrt{t})^{p}}
    +\frac{x^{p-1}}{(1+x-c(p)\sqrt{t})^{p-1}}+
    \frac{x^{p-2}}{(1+x-c(p)\sqrt{t})^{p-2}}\\
	&\hspace{2cm}+(1+x)^{p-4}+(1+x)^{p-3}+1
     +\frac{(x-c(p)\sqrt{t})x^{p-1}}{(1+x-c(p)\sqrt{t})^p}\Big)\\
	&\quad\le  C\left(\frac{(1+x)^{p-1}}{(1+x-c(p)\sqrt{t})^{p-1}}
 +(1+x)^{p-3}\right), 
\end{align*}
which is the desired bound for $p\ge3$.

If $p\in(2,3)$ then 
\begin{align*}
\e[|X|^3;|X|\le (x-c(p)\sqrt{t+1})/2]&\le 
(x-c(p)\sqrt{t+1})^{3-p}\e|X|^p\\
&\le C(1+x-c(p)\sqrt{t})^{3-p}.
\end{align*} 
Applying this together with \eqref{eq:markovestim>1} -- \eqref{eq:markovestim>4} to \eqref{eq:repr1} and \eqref{eq:repr2}, we find for $p\in (2,3)$
\begin{align*}
&\left|\e \overline{V}_p(x+X,t+1)-\overline{V}_p(x,t)\right|\\
&\quad\leq\left|\e\left[\overline{V}_p(x+X,t+1)-\overline{V}_p(x,t); |X|\leq(x-c(p)\sqrt{t+1})/2\right]\right|\\
&\qquad+\left|\e\left[\overline{V}_p(x+X,t+1)-\overline{V}_p(x,t); |X|>(x-c(p)\sqrt{t+1})/2\right]\right|\\
&\quad\le C\frac{x^{p-1}}{(1+x-c(p)\sqrt{t})^{p-1}}.
\end{align*}
This completes the proof of the first claim.

If $1<p\leq 2$ then we assume that $\e|X|^{2+\delta}<\infty$
for some $\delta>0$. In this case, instead of \eqref{eq:markovestim>1} -- \eqref{eq:markovestim>3}, we have
\begin{align}
\pr(|X|>(x-c(p)\sqrt{t+1})/2)
\le C(1+x-c(p)\sqrt{t})^{-(2+\delta)},\label{eq:markovestim<1}
\end{align}
\begin{align}
\e[|X|^{r};X>(x-c(p)\sqrt{t+1})/2]&\le C(1+x-c(p)\sqrt{t})^{r-2-\delta}
\quad\text{for all $r\leq 2$}
\label{eq:markovestim<2}
\end{align}
and
\begin{align}
\nonumber
\e[|X|^{3};X\leq(x-c(p)\sqrt{t+1})/2]
&\leq(x-c(p)\sqrt{t})^{1-\delta}\EW{|X|^{2+\delta}}\\
&\le C(1+x-c(p)\sqrt{t})^{1-\delta}.\label{eq:markovestim<3}
\end{align}
Entering \eqref{eq:markovestim<1} -- \eqref{eq:markovestim<3} into \eqref{eq:repr1} and \eqref{eq:repr2}, we thus obtain for $p\in(1,2]$
\begin{align*}
&\left|\e \overline{V}_p(x+X,t+1)-\overline{V}_p(x,t)\right|\\
&\quad\leq\left|\e\left[\overline{V}_p(x+X,t+1)-\overline{V}_p(x,t); |X|\leq(x-c(p)\sqrt{t+1})/2\right]\right|\\
&\qquad+\left|\e\left[\overline{V}_p(x+X,t+1)-\overline{V}_p(x,t); |X|>(x-c(p)\sqrt{t+1})/2\right]\right|\\
&\quad \le C\frac{(1+x)^{p-1}}{(1+x-c(p)\sqrt{t})^{1+\delta}}, 
\end{align*}
which is the second claim.
\end{proof}

\begin{lemma}[$p\in(0,1)$]\label{lem:remainder01}
	Assume that $\E |X|^{2+\delta}<\infty$ for some $\delta>0$.	
	If $p\in (0,1)$ then, as $x-c(p)\sqrt{t}\to\infty$,
	$$
	\left|\e \overline{V}_p(x+X,t+1)-\overline{V}_p(x,t)\right|
	= o\big({(1+x-c(p)\sqrt{t})^{p-2-\delta}}\big). 
	$$
\end{lemma}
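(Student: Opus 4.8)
The plan is to follow the proof of Lemma~\ref{lem:remainder}, decomposing the increment $X$ into a small part $\{|X|\le u/2\}$ and a large part $\{|X|>u/2\}$ with $u:=x-c(p)\sqrt{t+1}$; the point that makes the range $p\in(0,1)$ accessible is that then $c(p)<0$, so that all the functions $V_q$ with negative index $q$ which occur in the Taylor expansion are controlled by Lemma~\ref{lem:V-boundary-negative}. Throughout we may and do assume $\delta\in(0,1)$. In the regime under consideration $x>c(p)\sqrt t$, hence (since $c(p)<0$) also $x>c(p)\sqrt{t+1}$, and $u$ and $x-c(p)\sqrt t$ differ by at most $|c(p)|$, so $u\to\infty$ and it is equivalent to establish the bound $o(u^{p-2-\delta})$. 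Fix any $T>|c(p)|$. Applying Lemma~\ref{lem:V-boundary-negative} to $V_{p-1},V_{p-2},V_{p-3},V_{p-4}$ and checking that $y+T\sqrt s\ge u/4$ for $u$ large and $|y-x|\le u/2$, $|s-t|\le1$, one obtains the uniform bounds $V_{p-1}(x,t)=O(u^{p-1})$, $V_{p-2}(x,t)=O(u^{p-2})$, $V_{p-3}(y,s)=O(u^{p-3})$ and $V_{p-4}(y,s)=O(u^{p-4})$ in that range. Moreover, Lemma~\ref{lem:V-boundary-01} gives $\overline{V}_p(x,t)=V_p(x,t)\le A(p)u^p$, and for $|X|\le u/2$ we have $x+X\ge c(p)\sqrt{t+1}$, hence $\overline{V}_p(x+X,t+1)=V_p(x+X,t+1)$.

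For the large part, by the inequality $|a-b|\le a+b$ for $a,b\ge0$ the contribution of $\{|X|>u/2\}$ is bounded by $\E[\overline{V}_p(x+X,t+1);|X|>u/2]+\overline{V}_p(x,t)\P(|X|>u/2)$. On $\{X>u/2\}$ we have, by Lemma~\ref{lem:V-boundary-01}, $\overline{V}_p(x+X,t+1)=V_p(x+X,t+1)\le A(p)(u+X)^p\le3^pA(p)|X|^p$; on $\{X<-u/2\}$ the term vanishes for $X\le-u$ and is at most $A(p)(u/2)^p$ otherwise. Combining these with $\overline{V}_p(x,t)\le A(p)u^p$, with the pointwise bound $|X|^p\le(u/2)^{p-2-\delta}|X|^{2+\delta}$ valid on $\{|X|>u/2\}$, with $\P(|X|>u/2)\le(u/2)^{-2-\delta}\,\E[|X|^{2+\delta};|X|>u/2]$, and with $\E[|X|^{2+\delta};|X|>u/2]\to0$, every term here is $o(u^{p-2-\delta})$.

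For the small part I would insert the Taylor representation~\eqref{eq:Taylor-repr}, which on $\{|X|\le u/2\}$ reads $V_p(x+X,t+1)-V_p(x,t)=XpV_{p-1}(x,t)+\tfrac{X^2-1}{2}p(p-1)V_{p-2}(x,t)+R_p(x,t;X)$, and then take the expectation restricted to $\{|X|\le u/2\}$. Since $\E X=0$ and $\E X^2=1$, the first two terms become $-pV_{p-1}(x,t)\,\E[X;|X|>u/2]$ and $-\tfrac{p(p-1)}{2}V_{p-2}(x,t)\bigl(\E[X^2;|X|>u/2]-\P(|X|>u/2)\bigr)$; as $\E|X|^{2+\delta}<\infty$ gives $\E[|X|;|X|>u/2]=o(u^{-1-\delta})$ and $\E[X^2;|X|>u/2]=o(u^{-\delta})$, the size estimates on $V_{p-1},V_{p-2}$ make both of these $o(u^{p-2-\delta})$. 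In $\E[R_p(x,t;X);|X|\le u/2]$ the $V_{p-4}$–term is $O(u^{p-4})$, the term of $R_p$ linear in $X$ is at most $O(u^{p-3})\,\E|X|=O(u^{p-3})$, and the cubic term of $R_p$ is at most $O(u^{p-3})\,\E[|X|^3;|X|\le u/2]$. Here $O(u^{p-4})$ and $O(u^{p-3})$ are $o(u^{p-2-\delta})$ because $\delta<1$, and for the cubic term I would establish the refinement $\E[|X|^3;|X|\le u/2]=o(u^{1-\delta})$ — splitting at a fixed level $M$, $\E[|X|^3;|X|\le u/2]\le\E[|X|^3;|X|\le M]+(u/2)^{1-\delta}\E[|X|^{2+\delta};|X|>M]$, so that $\limsup_{u\to\infty}u^{\delta-1}\,\E[|X|^3;|X|\le u/2]\le2^{1-\delta}\E[|X|^{2+\delta};|X|>M]$ for every $M$ — whence the cubic term is $O(u^{p-3})\cdot o(u^{1-\delta})=o(u^{p-2-\delta})$. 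Adding up the two parts yields the claim.

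The step I expect to require the most care is the cubic term of $R_p$: a crude Hölder/Markov bound only gives $\E[|X|^3;|X|\le u/2]=O(u^{1-\delta})$, hence $O(u^{p-2-\delta})$, which is not sufficient, so one must extract the genuine little-oh from the finiteness of $\E|X|^{2+\delta}$ (and not from mere boundedness of tails). This is also exactly where the normalization $\delta<1$ is used in an essential way: $u^{p-3}=o(u^{p-2-\delta})$ precisely when $\delta<1$, and indeed for $\delta\ge1$ the cubic term carries a genuine contribution $\tfrac{1}{6}p(p-1)(p-2)\,\E[X^3]\,V_{p-3}(x,t)$ of exact order $u^{p-3}$. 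The remaining verifications — the uniform $O(u^q)$ bounds for the negative-index $V_q$ at the shifted arguments via Lemma~\ref{lem:V-boundary-negative}, and the comparison between $u$ and $x-c(p)\sqrt t$ — are routine.
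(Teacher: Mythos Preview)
Your proof is correct and follows the same decomposition as the paper's --- split at $|X|\lessgtr u/2$, control the large part via Lemma~\ref{lem:V-boundary-01} and the small part via the Taylor representation~\eqref{eq:Taylor-repr} together with Lemma~\ref{lem:V-boundary-negative}. You are in fact more explicit than the paper on the one delicate point: the paper dispatches the remainder with a bare ``Similarly'', whereas you spell out the refinement $\E[|X|^3;|X|\le u/2]=o(u^{1-\delta})$ that is genuinely needed for the little-oh, and you correctly note that this step (and hence both arguments) requires $\delta<1$.
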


\begin{proof}
	We shall use the same decomposition as in the proof of Lemma \ref{lem:remainder}.  Recalling that $\overline{V}_p(x,t)$ is decreasing in $t$ and increasing in $x$, we obtain
	\begin{align*}
		E_1&:= \left|\E\left[\overline{V}_p(x+X,t+1)-\overline{V}_p(x,t){;}\,X>(x\!-\!c(p)\sqrt{t})/2\right]\right|\\
		&\leq\e\left[\overline{V}_p(x+X,t);X>(x\!-\!c(p)\sqrt{t})/2\right]
		+\overline{V}_p(x,t)\pr(X>(x\!-\!c(p)\sqrt{t})/2)\\
        &\leq2 \e\left[\overline{V}_p(x+X,t);X>(x\!-\!c(p)\sqrt{t})/2\right]\\
		&\leq 2A(p)\e\left[(x\!+\!X\!-\!c(p)\sqrt{t})^p;X>(x\!-\!c(p)\sqrt{t})/2\right]\\
		&\leq 6 A(p)\e\left[|X|^p;X>(x\!-\!c(p)\sqrt{t})/2\right],
	\end{align*}
	where the third inequality follows from Lemma~\ref{lem:V-boundary-01}. Applying now the Chebyshev inequality, we conclude that
	\begin{equation}
		\label{eq:remainder01-1}
		E_1=o\left((1+x-c(p)\sqrt{t})^{p-2-\delta}\right).
	\end{equation}
	On the event $X<-(x\!-\!c(p)\sqrt{t})/2$ we have, due to Lemma~\ref{lem:V-boundary-01}, $\overline{V}_p(x+X,t+1)\le  C(x\!-\!c(p)\sqrt{t})^p$ and $\overline{V}_p(x,t)\le  C(x\!-\!c(p)\sqrt{t})^p$. Therefore,
	\begin{multline}\label{eq:remainder01-2}
		\E \left[\overline{V}_p(x+X,t+1)-\overline{V}_p(x,t){;}\,X<-(x\!-\!c(p)\sqrt{t})/2\right]
        \\=o\left((1+x\!-\!c(p)\sqrt{t})^{p-2-\delta}\right).
	\end{multline}
	In the event $|X|\leq(x-c(p)\sqrt{t})/2$ we shall use the decomposition in \eqref{eq:repr1}. Combining Lemma \ref{lem:V-boundary-negative} with the Chebyshev inequality, we have
	\begin{align*}
		V_{p-2}(x,t)\P(|X|>(x\!-\!c(p)\sqrt{t})/2)
        &\leq C(1+x\!-\!c(p)\sqrt{t})^{p-2}\P(|X|>(x\!-\!c(p)\sqrt{t})/2)\\
		&=o\left((1+x\!-\!c(p)\sqrt{t})^{p-4-\delta}\right),
  \end{align*}
  \begin{align*}
		V_{p-1}(x,t)\E\left[X{;}\,|X|>(x\!-\!c(p)\sqrt{t})/2\right]	=o\left((1+x\!-\!c(p)\sqrt{t})^{p-2-\delta}\right)
  \end{align*}
  and
  \begin{align*}
		V_{p-2}(x,t)\E\left[X^2{;}\,|X|>(x\!-\!c(p)\sqrt{t})/2\right]	&=o\left((1+x\!-\!c(p)\sqrt{t})^{p-2-\delta}\right).
	\end{align*}
	Similarly,
	\[\E\left[R_p(x,t,X){;}\,|X|\leq (x\!-\!c(p)\sqrt{t})/2\right]=o\left((1+x\!-\!c(p)\sqrt{t})^{p-2-\delta}\right).\]
	This completes the proof.
\end{proof}

We have bounded the absolute value of the mean drift. 
To construct a supermartingale 
we will correct $\overline{V}_p$ with a term of order
$x^{p-\delta}$ with some sufficiently small $\delta$.
In the next lemma we estimate the drift of this term.

\begin{lemma}[$p>1$]
	Assume that $\e|X|^{p}<\infty$ and $\e|X|^{2+\delta}<\infty$
	for some $\delta>0$. 
    \begin{enumerate}
    \item If $p>3$, then for all $x>c(p)\sqrt{t}$ there is constant $C>0$ depending only on $p$ and $\delta$ such that
	\begin{multline*}
	\E\left[|x+X|^{p-\delta}-x^{p-\delta}\right]\\
    \geq \frac{(p-\delta)(p-\delta-1)}{2}x^{p-\delta-2}-C\left(\frac{x^{p-\delta}}{(x-c(p)\sqrt{t})^{p}}
	+x^{p-\delta-3}\right).
	\end{multline*}
	\item If $p\in(2,3]$, then for all $x>c(p)\sqrt{t}$ there is a constant $C>0$ depending only on $p$ and $\delta$ such that	
	\begin{align*}		
	\E\left[|x+X|^{p-\delta}-x^{p-\delta}\right]\geq
    & \frac{(p-\delta)(p-\delta-1)}{2}x^{p-\delta-2}\\
    &\hspace{1cm}-C\left(\frac{x^{p-\delta}}{(x-c(p)\sqrt{t})^{p}}
	+(x-c(p)\sqrt t)^{p-2\delta-2}\right).
	\end{align*}
	\item If $p\in(1,2]$, then for all $x>c(p)\sqrt{t}$ there is a constant $C>0$ depending only on $p$ and $\delta$ such that
	\begin{align*}
	\E\left[|x+X|^{p-\delta}-x^{p-\delta}\right]
    &\geq \frac{(p-\delta)(p-\delta-1)}{2}x^{p-\delta-2}\\
    &\hspace{1cm}-C\left(\frac{x^{p-\delta}}{(x-c(p)\sqrt t)^{2+\delta}}
	+(x-c(p)\sqrt t)^{p-2\delta-2}\right).
	\end{align*}
    \end{enumerate}
	\label{lem:correctdiff}
\end{lemma}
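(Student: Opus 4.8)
The plan is to Taylor-expand $|x+X|^{p-\delta}$ after truncating the large values of $X$, exactly as in the proof of Lemma~\ref{lem:remainder}, and then to gather the error terms; the three cases will differ only in which moment is inserted into Markov's inequality. Write $q:=p-\delta$, $y:=x-c(p)\sqrt t$ (so $y>0$, and since $c(p)>0$ for $p>1$ also $y\le x$), and $M:=y/2$. As $\E|X|^{2+\delta}<\infty$ for some $\delta$ entails the same for all smaller $\delta>0$, I fix $\delta$ small enough that $q>1$ in all three cases; then $q<3$ automatically in cases (2),(3), while in case (1) $\E|X|^{3}<\infty$ because $p>3$. First I would split
\[
\E\left[|x+X|^{q}-x^{q}\right]=\E\left[(x+X)^{q}-x^{q};\,|X|\le M\right]+\E\left[|x+X|^{q};\,|X|>M\right]-x^{q}\P(|X|>M),
\]
which is legitimate since $x+X\ge (x+c(p)\sqrt t)/2\ge x/2>0$ on $\{|X|\le M\}$, and then discard the nonnegative middle term, keeping only
\[
\E\left[|x+X|^{q}-x^{q}\right]\ge\E\left[(x+X)^{q}-x^{q};\,|X|\le M\right]-x^{q}\P(|X|>M).
\]

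Next I would use Taylor's formula with Lagrange remainder on $\{|X|\le M\}$,
\[
(x+X)^{q}-x^{q}=q x^{q-1}X+\frac{q(q-1)}{2}x^{q-2}X^{2}+\frac{q(q-1)(q-2)}{6}(x+\theta X)^{q-3}X^{3},
\]
with $\theta=\theta(X)\in(0,1)$. Taking expectations over $\{|X|\le M\}$ and using $\E X=0$ and $\E X^{2}=1$ produces the leading term $\frac{q(q-1)}{2}x^{q-2}=\frac{(p-\delta)(p-\delta-1)}{2}x^{p-\delta-2}$ (positive, as $q>1$), plus error contributions bounded in absolute value by $q x^{q-1}\E[|X|;|X|>M]$ and $\frac{q(q-1)}{2}x^{q-2}\E[X^{2};|X|>M]$, and the remainder $R:=\frac{q(q-1)(q-2)}{6}\E\left[(x+\theta X)^{q-3}X^{3};\,|X|\le M\right]$. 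For these tail contributions, together with $x^{q}\P(|X|>M)$, I would apply $\E[|X|^{r};|X|>M]\le M^{r-s}\E|X|^{s}$ with $s=p$ in cases (1),(2) and $s=2+\delta$ in case (3); since $M=y/2$ and $y\le x$, each of these three quantities is then at most a constant times $x^{q}y^{-p}=x^{p-\delta}(x-c(p)\sqrt t)^{-p}$ in cases (1),(2), and at most a constant times $x^{q}y^{-(2+\delta)}=x^{p-\delta}(x-c(p)\sqrt t)^{-(2+\delta)}$ in case (3).

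It then remains to estimate $R$. On $\{|X|\le M\}$ one has $x/2\le x+\theta X\le 3x/2$, hence $(x+\theta X)^{q-3}\le C x^{q-3}$ regardless of the sign of $q-3$, so $|R|\le C x^{q-3}\E[|X|^{3};|X|\le M]$. In case (1) this is at most $C x^{q-3}\E|X|^{3}=C x^{p-\delta-3}$; in cases (2),(3) one has $\E[|X|^{3};|X|\le M]\le M^{1-\delta}\E|X|^{2+\delta}\le C y^{1-\delta}$, and since $q<3$ and $y\le x$ allow $x^{q-3}$ to be replaced by $y^{q-3}$, it follows that $|R|\le C y^{q-2-\delta}=C(x-c(p)\sqrt t)^{p-2\delta-2}$. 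Combining the leading term with these estimates yields the three asserted inequalities. I do not expect a genuine obstacle; the only delicate point is matching $R$ to the error terms claimed in cases (2),(3), which uses both the truncated-moment bound $\E[|X|^{3};|X|\le M]\le M^{1-\delta}\E|X|^{2+\delta}$ and the elementary inequality $x^{q-3}\le(x-c(p)\sqrt t)^{q-3}$, valid precisely because $q<3$ and $x-c(p)\sqrt t\le x$.
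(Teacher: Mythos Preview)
Your proof is correct and follows essentially the same route as the paper's: the same truncation at $M=(x-c(p)\sqrt t)/2$, the same third-order Taylor expansion on $\{|X|\le M\}$, the same use of $\E X=0$, $\E X^2=1$ to isolate the leading term, and the same Markov-type bounds (with exponent $p$ in cases (1),(2) and $2+\delta$ in case (3)) for the tail contributions and the same truncated-moment bound $\E[|X|^3;|X|\le M]\le M^{1-\delta}\E|X|^{2+\delta}$ for the remainder in cases (2),(3). The only cosmetic difference is that the paper splits $\{|X|>M\}$ into $\{X>M\}$ and $\{X<-M\}$ separately (its $A_2,A_3$), whereas you discard $\E[|x+X|^{q};|X|>M]\ge0$ in one stroke; this yields the slightly cruder $\P(|X|>M)$ in place of $\P(X<-M)$, which makes no difference to the final estimate.
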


\begin{proof}
	We split the expectation into three parts 
	\begin{align}
	\E&\left[|x+X|^{p-\delta}-x^{p-\delta}\right]\notag\\
	&=\E\left[|x+X|^{p-\delta}-x^{p-\delta}{;}\, |X|\leq(x-c(p)\sqrt{t})/2\right]\notag\\
	&\quad+\E\left[|x+X|^{p-\delta}-x^{p-\delta}{;}\, X>(x-c(p)\sqrt{t})/2\right]\notag\\
	&\quad+\E\left[|x+X|^{p-\delta}-x^{p-\delta}{;}\, X<-(x-c(p)\sqrt{t})/2\right]\notag\\
	&=:A_1+A_2+A_3.\label{eq:correct}
	\end{align}
	We find immediately that $A_2\geq 0$. For $A_3$, notice that  $|x+X|^p\geq0$
	and thus due to \eqref{eq:markovestim>1} (if $p>2$) and \eqref{eq:markovestim<1} (if $1<p\leq 2$), there is a constant $C>0$ 
    such that
	\begin{align}
    \label{eq:A3} 
		A_3&\geq -x^{p-\delta}\P(X<-(x-c(p)\sqrt{t})/2)\geq -C\frac{x^{p-\delta}}{(x-c(p)\sqrt{t})^p}.
	\end{align}
	Finally, for $A_1$, notice that due to $X\geq -(x-c(p)\sqrt{t})/2$, $x+X\geq0$ so we can remove the absolutes. By Taylor's formula, there is a $\theta=\theta(x,X)\in[0,1]$ such that
	\begin{align}
	A_1&=\EW{|x+X|^{p-\delta}-x^{p-\delta}{;}\, |X|\leq(x-c(p)\sqrt{t})/2}\notag\\
	\begin{split}
	&=\E\Big[{(p-\delta)}x^{p-\delta-1}\cdot X+\frac{(p-\delta)(p-\delta-1)}{2}x^{p-\delta-2}\cdot X^2\\
	&+\frac{(p-\delta)(p-\delta\!-\!1)(p-\delta\!-\!2)}{6}(x\!+\!\theta X)^{p-\delta-3}\cdot\! X^3{;}\,|X|\leq(x-c(p)\sqrt{t})/2\Big].
	\end{split}\label{eq:correct_B1a}
	\end{align}
	Consider the terms in this expression separately.
	Firstly, due to $\E X=0$ and \eqref{eq:markovestim>2} and \eqref{eq:markovestim<2}, there is a constant $C>0$ 
    such that
	\begin{align}
	\E&\left[(p-\delta)x^{p-\delta-1}\cdot X{;}\,|X|\leq(x-c(p)\sqrt{t})/2\right]\notag\\
	&=-(p-\delta)x^{p-\delta-1}\EW{X{;}\,|X|>(x-c(p)\sqrt{t})/2}\notag\\
    &\geq-C\frac{x^{p-\delta-1}}{(x-c(p)\sqrt{t})^{p-1}}
    \geq-C\frac{x^{p-\delta}}{(x-c(p)\sqrt{t})^{p}}
    \label{eq:correct_B1b}.
	\end{align}
	Secondly, due to $\E X^2=1$ and \eqref{eq:markovestim>3}, there is a constant $C>0$ 
    such that in the case $p>2$
	\begin{align}
	\E&\left[\frac{(p-\delta)(p-\delta-1)}{2}x^{p-\delta-2}\cdot X^2{;}\,|X|\leq (x-c(p)\sqrt{t})/2\right]\notag\\
	&=\frac{(p-\delta)(p-\delta-1)}{2}x^{p-\delta-2}\left(1-\EW{X^2{;}\,|X|>(x-c(p)\sqrt{t})/2}\right)\notag\\
	&\geq\frac{(p-\delta)(p-\delta-1)}{2}x^{p-\delta-2}-C\frac{x^{p-\delta-2}}{(x-c(p)\sqrt{t})^{p-2}}\notag\\
    &\geq\frac{(p-\delta)(p-\delta-1)}{2}x^{p-\delta-2}-C\frac{x^{p-\delta}}{(x-c(p)\sqrt{t})^{p}},\label{eq:correct_B1c}
	\end{align}
	while for $1<p\leq 2$ using \eqref{eq:markovestim<2},  there is a constant $C>0$ 
    such that
	\begin{align}
	\E&\left[\frac{(p-\delta)(p-\delta-1)}{2}x^{p-\delta-2}\cdot X^2{;}\,|X|\leq (x-c(p)\sqrt{t})/2\right]\notag\\
	&=\frac{(p-\delta)(p-\delta-1)}{2}x^{p-\delta-2}\left(1-\EW{X^2{;}\,|X|>(x-c(p)\sqrt{t})/2}\right)\notag\\
	&\geq\frac{(p-\delta)(p-\delta-1)}{2}x^{p-\delta-2}-C\frac{x^{p-\delta-2}}{(x-c(p)\sqrt t)^{\delta}}\notag\\
    &\geq\frac{(p-\delta)(p-\delta-1)}{2}x^{p-\delta-2}-C\frac{x^{p-\delta}}{(x-c(p)\sqrt t)^{2+\delta}}.\label{eq:correct_B1c2}
	\end{align}
	Thirdly, we notice that if $|X|\leq(x-c(p)\sqrt{t})/2$, then for every $\theta\in[0,1]$
	\[\frac32 x\geq \frac32 x-\frac12c(p)\sqrt t \geq x+\theta X\geq \frac12(x+c(p)\sqrt t)\geq 0.\] For $p>3$, the third moment of $X$ is finite, so there is a constant $C>0$ depending only on $p$ and $\delta$ such that
	\begin{align}
	\E&\left[\frac{(p-\delta)(p-\delta-1)(p-\delta-2)}{6}(x+\theta X)^{p-\delta-3}\cdot X^3{;}\,|X|\leq (x-c(p)\sqrt{t})/2\right]\notag\\
	&\geq-\frac{(p-\delta)(p-\delta-1)(p-\delta-2)}{6}\left(\frac32x\right)^{p-\delta-3}\EW{|X|^3{;}\,|X|\leq (x-c(p)\sqrt{t})/2}\notag\\
	&\geq-Cx^{p-\delta-3}\label{eq:correct_B1d}.
	\end{align}
	Combining all results \eqref{eq:correct} -- \eqref{eq:correct_B1c} and \eqref{eq:correct_B1d}, we obtain that for $p>3$ there is a constant $C>0$ depending only on $p$ and $\delta$ such that
	\begin{align*}
	\E\left[|x+X|^{p}-x^{p}\right]
    &\geq \frac{(p-\delta)(p-\delta-1)}{2}x^{p-\delta-2}-C\left(\frac{x^{p-\delta}}{(x-c(p)\sqrt{t})^{p}}
	+x^{p-\delta-3}\right),
	\end{align*}
	If $p\in (2,3]$, then we notice that $p-\delta-3<0$. So if $|X|\leq(x-c(p)\sqrt{t})/2$, then 
	\[0\leq (x+\theta X)^{p-\delta-3}\leq\Big(x-(x-c(p)\sqrt{t})/2\Big)^{p-\delta-3}\leq C\Big(x-c(p)\sqrt{t}\Big)^{p-\delta-3}\]
	and using that $\E |X|^{2+\delta}<\infty$, we obtain
	\begin{align}
	\E&\left[\frac{(p\!-\!\delta)(p\!-\!\delta\!-\!1)(p\!-\!\delta\!-\!2)}{6}(x+\theta X)^{p-\delta-3}\cdot X^3{;}\,|X|\leq (x-c(p)\sqrt{t})/2\right]\notag\\
	&\geq-C\left((x-c(p)\sqrt{t})/2\right)^{p-\delta-3}\EW{|X|^3{;}\,|X|\leq (x-c(p)\sqrt{t})/2}\notag\\
	&\geq-C\left(x-c(p)\sqrt t\right)^{p-\delta-3}\EW{|X|^3\Big(\frac{(x-c(p)\sqrt{t})/2}{|X|}\Big)^{1-\delta}{;}\,|X|\leq (x-c(p)\sqrt{t})/2}\notag\\
	&\geq-C\left(x-c(p)\sqrt t\right)^{p-2\delta-2}\EW{|X|^{2+\delta}{;}\,|X|\leq (x-c(p)\sqrt{t})/2}\notag\\
	&\geq-C\left(x-c(p)\sqrt t\right)^{p-2\delta-2}\label{eq:correct_B1e}.
	\end{align}
	Combining all results \eqref{eq:correct} -- \eqref{eq:correct_B1c} and \eqref{eq:correct_B1e}, for $p\in(2,3]$ we obtain that there is a constant $C>0$ depending only on $p$ and $\delta$ such that
	\begin{align*}
	\E\left[|x+X|^{p}-x^{p}\right]
    &\geq \frac{(p-\delta)(p-\delta-1)}{2}x^{p-\delta-2}\\
    &\hspace{1cm}-C\left(\frac{x^{p-\delta}}{(1+x-c(p)\sqrt{t})^{p}}
	+(x-c(p)\sqrt t)^{p-2\delta-2}\right).
	\end{align*}
	If $p\in (1,2]$, then we notice that $p-\delta-3<p-\delta-2<0$. So analogous to the case $p\in(2,3]$, there is some constant $C>0$ such depending only on $p$ and $\delta$ such that
	\begin{align}
	\E&\left[\frac{(p\!-\!\delta)(p\!-\!\delta\!-\!1)(p\!-\!\delta\!-\!2)}{6}(x+\theta X)^{p-\delta-3}\cdot X^3{;}\,|X|\leq (x-c(p)\sqrt{t})/2\right]\notag\\
	&\geq-C\left(x-c(p)\sqrt t\right)^{p-2\delta-2}\label{eq:correct_B1f}.
	\end{align}
	Combining all results \eqref{eq:correct} -- \eqref{eq:correct_B1b}, \eqref{eq:correct_B1c2} and \eqref{eq:correct_B1f}, for $p\in(1,2]$ we obtain that there is a constant $C>0$ depending only on $p$ and $\delta$ such that
	\begin{align*}
	\E\left[|x+X|^{p}-x^{p}\right]
    &\geq \frac{(p-\delta)(p-\delta-1)}{2}x^{p-\delta-2}\\
    &\hspace{1cm}-C\left(\frac{x^{p-\delta}}{(x-c(p)\sqrt t)^{2+\delta}}
	+(x-c(p)\sqrt t)^{p-2\delta-2}\right),
	\end{align*}
	thereby completing the proof.	
\end{proof}

\begin{remark}
	If $t\to\infty$ and $x\ge(c(p)+\gamma)\sqrt{t}$ 
    for some $\gamma>0$, then we have
	\begin{align*}
	\E&\left[|x+X|^{p-\delta}-x^{p-\delta}\right]=\frac{(p-\delta)(p-\delta-1)}{2}x^{p-\delta-2}(1+o(1)),
	\end{align*}
	as the dominating term was obtained only through equalities.
 \hfill$\diamond$
 \label{rem:asymptexact}
\end{remark}

By Lemma \ref{lem:V-prop}, we notice that if $x\geq (c(p)+\gamma)\sqrt t$ for some $\gamma>0$, then 
\[V_p(x,t)-C\cdot x^{p-\delta}\geq \frac{1}{A(p,\gamma)}x^p-Cx^{p-\delta}.\]
For every constant $C$ we can therefore find a constant $R(C)$ such that
for every $R\ge R(C)$ one has
\begin{equation}
V_p(x+R,t)-C\cdot |x+R|^{p-\delta}>0
\label{eq:Lower_for_f}
\end{equation}
for all $x\geq (c(p)+\gamma)\sqrt t$.

With this in mind, we define for $k\in\N$ and $x\in\R$ 
\[h(x,t):= \left(\overline{V}_p(x+R,t)-C\cdot |x+R|^{p-\delta}\right)^+.\]

\begin{lemma}
	Let $p>1$. For every $\gamma>0$ there exist constants $C$ and $R$ such that if $x\geq (c(p)+\gamma)\sqrt{t}$ then 
	\[\E\left[h(x+X,t+1)-h(x,t)\right]\leq 0.\]
	\label{lem:expectf}
\end{lemma}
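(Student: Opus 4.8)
The plan is to show that on the region $\mathcal R:=\{(x,t):x\ge(c(p)+\gamma)\sqrt t\}$ the positive part in the definition of $h$ is inactive, so that there $h(x,t)=\overline V_p(x+R,t)-C|x+R|^{p-\delta}$, and the one–step drift of the right–hand side can be analysed term by term via Lemmas~\ref{lem:remainder} and~\ref{lem:correctdiff}. First I would fix an auxiliary $\gamma_0\in(0,\gamma)$ and apply~\eqref{eq:Lower_for_f} with $\gamma_0$ in place of $\gamma$ to choose $R=R(C)$; this guarantees both that $h$ agrees with $\overline V_p(x+R,t)-C|x+R|^{p-\delta}$ on $\{x\ge(c(p)+\gamma_0)\sqrt t\}\supseteq\mathcal R$ and that the set $G^c:=\{\overline V_p(x+R+X,t+1)<C|x+R+X|^{p-\delta}\}$, on which $h(x+X,t+1)$ vanishes, is contained in $\{x+X<(c(p)+\gamma_0)\sqrt{t+1}\}$. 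Since only the existence of \emph{some} suitable $\delta$ is asserted, I also assume throughout that $\delta<1$ and, when $p>2$, that $\delta<p-2$ (so $p-\delta-2>0$ whenever $p>2$).

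Writing $\phi^+=\phi+\phi^-$ for $\phi=\overline V_p(x+R+X,t+1)-C|x+R+X|^{p-\delta}$ and using $0\le\phi^-\le C|x+R+X|^{p-\delta}\ind_{G^c}$ (because $\overline V_p\ge0$), one gets the pointwise bound $h(x+X,t+1)\le\phi+C|x+R+X|^{p-\delta}\ind_{G^c}$; taking expectations and subtracting $h(x,t)$ yields, for $(x,t)\in\mathcal R$,
\[
\E[h(x+X,t+1)]-h(x,t)\ \le\ \mathrm{(I)}-C\,\mathrm{(II)}+C\,\mathrm{(III)},
\]
with $\mathrm{(I)}=\E\overline V_p(x+R+X,t+1)-\overline V_p(x+R,t)$, $\mathrm{(II)}=\E|x+R+X|^{p-\delta}-|x+R|^{p-\delta}$ and $\mathrm{(III)}=\E[|x+R+X|^{p-\delta};G^c]$. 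Since $x+R-c(p)\sqrt t\ge\tfrac{\gamma}{c(p)+\gamma}(x+R)$ on $\mathcal R$, Lemma~\ref{lem:remainder} applied at the point $x+R$ gives $\mathrm{(I)}=O\bigl((1+x+R)^{p-\delta-2}\bigr)$ with a constant independent of $C$ (and $\mathrm{(I)}=o\bigl((1+x+R)^{p-\delta-2}\bigr)$ when $p>2$), while Lemma~\ref{lem:correctdiff} gives $\mathrm{(II)}\ge\kappa(x+R)^{p-\delta-2}-e(x,t)$ with $\kappa:=\tfrac{(p-\delta)(p-\delta-1)}{2}>0$ and $e(x,t)=o\bigl((1+x+R)^{p-\delta-2}\bigr)$ on $\mathcal R$. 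Thus $-C\,\mathrm{(II)}$ supplies a genuinely negative term $-C\kappa(x+R)^{p-\delta-2}$, and everything hinges on estimating $\mathrm{(III)}$.

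The estimate of $\mathrm{(III)}$ is the main obstacle: near the lower boundary $x\approx(c(p)+\gamma)\sqrt t$ the set $\{x+X<(c(p)+\gamma_0)\sqrt{t+1}\}$ is not rare, so the weaker inclusion $G^c\subseteq\{x+X<(c(p)+\gamma_0)\sqrt{t+1}\}$ is not enough, and one must use the full information encoded in $G^c$. By Lemma~\ref{lem:V-boundary}, $\overline V_p(z,s)\ge C^{-1}V_{p-1}(z,s)\,(z-c(p)\sqrt s)$ for $z>c(p)\sqrt s$; since $\inf_{u\ge c(p)}\psi_{p-1}(u)>0$ (so $V_{p-1}(z,s)\gtrsim s^{(p-1)/2}$ there) and $V_p(z,s)/z^p\to1$ as $z/\sqrt s\to\infty$ uniformly in $s$ by~\eqref{p9}, the inequality $\overline V_p(z,s)<Cz^{p-\delta}$ forces $z-c(p)\sqrt s=O\bigl(s^{(1-\delta)/2}\bigr)$ when $s$ is large, and $z=O(C^{1/\delta})\le R/2$ when $s$ is bounded. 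Combined with $x\ge(c(p)+\gamma)\sqrt t$, this forces the increment to be atypically negative — roughly $X\lesssim-(\sqrt t\vee x)$ once $t$ exceeds a fixed constant, and $X<-x-R/2$ when $t$ is bounded. Estimating $\P(G^c)$ and the relevant truncated moments of $|X|^{p-\delta}$ exactly as in Lemma~\ref{lem:remainder} (using $\E|X|^p<\infty$ when $p>2$ and $\E|X|^{2+\delta}<\infty$ when $p\le2$), and splitting $|x+R+X|^{p-\delta}$ on $G^c$ according to the sign of $x+R+X$ (its negative part being at most $|X|^{p-\delta}$), one obtains $\mathrm{(III)}=o\bigl((1+x+R)^{p-\delta-2}\bigr)$ as $x\to\infty$, uniformly over $\mathcal R$ and with constants that stay controlled as $C,R\to\infty$; these estimates are routine but lengthy, and they are the technical heart of the argument.

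Finally I would assemble the pieces. Fix $N_0$ large, depending only on $p,\gamma,\gamma_0,\delta$ and the law of $X$, so that for $x\ge N_0$ one has $e(x,t)\le\tfrac{\kappa}{4}(x+R)^{p-\delta-2}$, $\mathrm{(III)}\le\tfrac{\kappa}{4}(x+R)^{p-\delta-2}$ and $\mathrm{(I)}\le C_1(x+R)^{p-\delta-2}$ for some $C_1$ not depending on $C$; then for $C$ large enough the displayed bound is $\le\bigl(C_1-C\kappa+\tfrac{C\kappa}{2}\bigr)(x+R)^{p-\delta-2}\le-\tfrac{C\kappa}{4}(x+R)^{p-\delta-2}<0$ on $\{x\ge N_0\}$. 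On the complementary bounded set $\{0\le x<N_0\}$ (hence $t<(N_0/(c(p)+\gamma))^2$) one has $\mathrm{(II)}\ge\tfrac{\kappa}{2}(N_0+R)^{p-\delta-2}$ up to a lower–order error, while $\mathrm{(I)}=O\bigl((1+R)^{p-\delta-2}\bigr)$ carries no factor $C$ and $\mathrm{(III)}=o\bigl(R^{p-\delta-2}\bigr)$ by the previous paragraph, so that, enlarging $C$ further (and hence $R=R(C)$), the term $-C\kappa(x+R)^{p-\delta-2}$ dominates and the bound is again $\le0$ there. Choosing $C$ accordingly and then $R=R(C)$ completes the proof.
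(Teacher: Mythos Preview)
Your decomposition $\mathrm{(I)}-C\,\mathrm{(II)}+C\,\mathrm{(III)}$ and the use of Lemmas~\ref{lem:remainder} and~\ref{lem:correctdiff} for $\mathrm{(I)}$ and $\mathrm{(II)}$ are exactly what the paper does. The only substantive divergence is in the treatment of $\mathrm{(III)}$, and there you overcomplicate matters. You assert that the inclusion $G^c\subseteq\{x+X<(c(p)+\gamma_0)\sqrt{t+1}\}$ ``is not enough'' and then invoke Lemma~\ref{lem:V-boundary}, properties of $\psi_{p-1}$, and a case analysis in $s$ to squeeze more out of $G^c$. In fact the simpler inclusion \emph{is} enough: if you take $\gamma_0<\gamma/2$ and use the hypothesis $x\ge(c(p)+\gamma)\sqrt t$, a short calculation shows (for $t$ bounded away from~$0$) that $(c(p)+\gamma_0)\sqrt{t+1}\le(x+c(p)\sqrt t)/2$, so that $G^c\subseteq\{X<-(x-c(p)\sqrt t)/2\}$. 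This is precisely the inclusion the paper records immediately after~\eqref{eq:expectf-rem1}, and from it one bounds $\mathrm{(III)}$ by the elementary moment estimate~\eqref{eq:expectf-rem1}, namely $\mathrm{(III)}\le C_3\,(x+R)^{p-\delta}/(x-c(p)\sqrt t)^{p}$ with $C_3$ independent of $C$ and $R$. This is considerably shorter than your route, and it sidesteps a potential circularity in your argument: your refined bound $z-c(p)\sqrt s=O(s^{(1-\delta)/2})$ carries a hidden constant proportional to $C$ (the $C$ in $h$), so your claim that the constants in $\mathrm{(III)}$ ``stay controlled as $C,R\to\infty$'' and that $N_0$ can be chosen independent of $C$ is not obviously justified by what you wrote.

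On the positive side, your final assembly is more explicit than the paper's. The paper concludes by saying the negative term dominates ``for sufficiently large $x$'' without spelling out how the bounded range of $(x,t)$ is absorbed into the choice of $R$; you at least attempt to treat $\{x<N_0\}$ separately by enlarging $R$. So the two arguments trade carefulness in different places, but the paper's handling of $\mathrm{(III)}$ is the cleaner one and is what you should adopt.
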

\begin{proof}
	There exists a constant $C>0$ depending only on $p$ and $\delta$ such that
	\begin{align}
		&\E\left[|x+R+X|^{p-\delta}{;}\,X<-(x-c(p)\sqrt{t})/2\right]\notag\\
		&\leq 2^{p}\left((x+R)^{p-\delta}\P\left(|X|>(x-c(p)\sqrt{t})/2\right)+\E\left[|X|^{p-\delta}{;}\,|X|>(x-c(p)\sqrt{t})/2\right]\right)\notag\\
		&\leq 2^{2p}\left((x+R)^{p-\delta}\frac{\E[|X|^p]}{(x-c(p)\sqrt t)^p}+\frac{\E\left[|X|^{p}\right]}{(x-c(p)\sqrt t)^\delta}\right)\notag\\
		&\leq C\left(\frac{(x+R)^{p-\delta}}{(x-c(p)\sqrt t)^p}
        +\frac{1}{(x-c(p)\sqrt t)^\delta}\right)\notag\\
        &\leq C\frac{(x+R)^{p-\delta}}{(x-c(p)\sqrt t)^p}.
        \label{eq:expectf-rem1}
	\end{align}
	Using \eqref{eq:Lower_for_f} we infer that
 $$
 \{\overline{V}_p(x+R+X,t+1)<C|x+R+X|^{p-\delta}\}\subseteq\{X<-(x-c(p)\sqrt{t})/2\}
 $$
 for all $R\ge R(C)$, so
	\begin{align*}
	&\E\left[h(x+X,t+1)-h(x,t)\right]\\
	&=\E\left[ \overline{V}_p(x+R+X,t+1)-\overline{V}_p(x+R,t)-C\big(|x+R+X|^{p-\delta}-|x+R|^{p-\delta}\big)\right]\\
	&\quad-\E\Big[ \overline{V}_p(x+R+X,t+1)
    \\&\quad \quad \quad 
    -C|x+R+X|^{p-\delta}{;}\,\overline{V}_p(x+R+X,t+1)
     <C|x+R+X|^{p-\delta}\Big]\\
	&\leq \EW{\overline{V}_p(x+R+X,t+1)-\overline{V}_p(x+R,t)}
    \\
    &\quad -C\cdot\EW{ \left(|x+R+X|^{p-\delta}-|x+R|^{p-\delta}\right)}\\
	&\quad+\E\left[C|x+R+X|^{p-\delta}{;}\,X<-(x-c(p)\sqrt{t})/2\right].
	\end{align*}
	Using  Lemmas \ref{lem:remainder} and \ref{lem:correctdiff} together with \eqref{eq:expectf-rem1}, we find that for $p> 3$,
	\begin{align*}
	\E\left[h(x+X,t+1)-h(x,t)\right]&\leq-\frac{(p-\delta)(p-\delta-1)}{2}(x+R)^{p-\delta-2}\\
	&\quad+C\left((x+R)^{p-3}+ \frac{(x+R)^{p-\delta}}{(x-c(p)\sqrt t)^p}\right).
	\end{align*}
	For $p\in(2,3]$,
	\begin{align*}
	\E\left[h(x+X,t+1)-h(x,t)\right]&\leq-\frac{(p-\delta)(p-\delta-1)}{2}(x+R)^{p-\delta-2}\\
	&+C\bigg(\frac{(x+R)^{p-1}}{(x-c(p)\sqrt{t})^{p-1}}
 +(x+R-c(p)\sqrt t)^{p-2\delta-2}\bigg).
	\end{align*}
	And if $p\le2$, 
	\begin{align*}
	\E\left[h(x+X,t+1)-h(x,t)\right]
    &\leq- \frac{(p-\delta)(p-\delta-1)}{2}(x+R)^{p-\delta-2}\\
    &\quad+C\Bigg(\frac{(x+R)^{p-\delta}}{(x-c(p)\sqrt{t})^{2+\delta}}
	+(x+R-c(p)\sqrt t)^{p-2\delta-2}\Bigg).
	\end{align*}
	
	In all three cases, the entire term is dominated by $-\frac{(p-\delta)(p-\delta-1)}{2}(x+R)^{p-\delta-2}$ for sufficiently large $x$ (all further terms are of lower order; indeed by Remark \ref{rem:asymptexact}, we obtain an asymptotic equality). Due to the negative sign, this term is non-positive, hence
	\[
	\E\left[h(x+X,t+1)-h(x,t)\right]\leq 0.\qedhere\]	
\end{proof}

\begin{lemma}
	For every $c>0$ and every $1<p_1<p(c)$, there exist constants $C$ and $R$ such that 
	\begin{align*}
	M_n&:= h(a+S(T_{a,b}\wedge n),b+T_{a,b}\wedge n)\\
	&=\Big(\overline{V}_{p_1}(R+a+S(T_{a,b}\wedge n),b+T_{a,b}\wedge n)-C\cdot|a+R+S(n)|^{p_1-\delta}\Big)^+
	\end{align*} is a supermartingale.
	\label{lem:supermartingale}
\end{lemma}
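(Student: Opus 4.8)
The plan is to observe that the analytic content of this lemma is already supplied by Lemma~\ref{lem:expectf}, so that what remains is to choose the parameter $\gamma$ correctly and to run the standard stopped‑process argument. Since $p\mapsto c(p)$ is monotone increasing and $c(p(c))=c$, the hypothesis $1<p_1<p(c)$ gives $c(p_1)<c$, and I would set
\[
\gamma:=c-c(p_1)>0 .
\]
Fixing this $\gamma$, let $C=C(\gamma)$ and $R=R(\gamma)$ be the constants furnished by Lemma~\ref{lem:expectf}; these are the constants appearing in the statement, and $h$ is the function $h(x,t)=(\overline{V}_{p_1}(x+R,t)-C|x+R|^{p_1-\delta})^+$ built with exactly these $C,R$. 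In particular $h\ge0$, so $M_n\ge0$, and $M_n$ is integrable: from $h(x,t)\le\overline{V}_{p_1}(x+R,t)$ and the second estimate in Lemma~\ref{lem:V-boundary} (valid since $p_1>1$) one gets $h(x,t)\le A(p_1)(|x|+R)^{p_1}$, while $\E|X|^{p_1}<\infty$ (because $p_1<p(c)$ and $\E|X|^{p(c)}<\infty$) together with Doob's inequality gives $\E\max_{k\le n}|S(k)|^{p_1}<\infty$, hence $\E M_n<\infty$.

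For the supermartingale property, with respect to $\mathcal F_n:=\sigma(X_1,\dots,X_n)$, I would condition on $\mathcal F_n$ and split on the $\mathcal F_n$‑measurable events $\{T_{a,b}\le n\}$ and $\{T_{a,b}>n\}$. On $\{T_{a,b}\le n\}$ the stopped process is frozen, $M_{n+1}=M_n$, so the conditional drift vanishes. On $\{T_{a,b}>n\}$ one has $T_{a,b}\wedge n=n$ and $T_{a,b}\wedge(n+1)=n+1$, so $M_n=h(a+S(n),b+n)$ and $M_{n+1}=h(a+S(n)+X_{n+1},b+n+1)$; since $a+S(n),b+n$ are $\mathcal F_n$‑measurable and $X_{n+1}$ is independent of $\mathcal F_n$, the conditional drift $\E[M_{n+1}-M_n\mid\mathcal F_n]$ equals $F(a+S(n),b+n)$ on this event, where $F(x,t):=\E[h(x+X,t+1)-h(x,t)]$. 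Now the defining property of $\{T_{a,b}>n\}$ is that $a+S(n)>c\sqrt{b+n}$; as $c=c(p_1)+\gamma$, this says precisely that the pair $(x,t)=(a+S(n),b+n)$ satisfies $x\ge(c(p_1)+\gamma)\sqrt t$, so Lemma~\ref{lem:expectf} gives $F(a+S(n),b+n)\le0$ there. Combining the two events yields $\E[M_{n+1}\mid\mathcal F_n]\le M_n$ almost surely. The same computation handles the step $n=0$: $T_{a,b}\ge1$ always, and the standing assumption $a>c\sqrt b=(c(p_1)+\gamma)\sqrt b$ places $(a,b)$ in the region where Lemma~\ref{lem:expectf} applies.

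I do not expect a genuine obstacle here, since the hard estimates were carried out in Lemmas~\ref{lem:remainder}--\ref{lem:expectf}. The one point that must be got right is conceptual rather than computational: Lemma~\ref{lem:expectf} only controls the drift of $h$ on the half‑plane $\{x\ge(c(p_1)+\gamma)\sqrt t\}$, i.e.\ for points separated from the curve $c(p_1)\sqrt t$ by a fixed multiple of $\sqrt t$, whereas on the survival event $\{T_{a,b}>n\}$ the walk sits above the \emph{higher} curve $c\sqrt t=c(p(c))\sqrt t$. It is exactly the choice of an auxiliary exponent $p_1$ strictly below $p(c)$ that makes the gap $\gamma=c-c(p_1)$ strictly positive and thereby guarantees that survival forces the walk into the domain of validity of Lemma~\ref{lem:expectf}; with $p_1=p(c)$ this mechanism would break down.
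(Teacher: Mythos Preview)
Your proposal is correct and follows essentially the same approach as the paper: the paper likewise sets $\gamma=c(p)-c(p_1)=c-c(p_1)>0$, splits on $\{T_{a,b}\le n\}$ versus $\{T_{a,b}>n\}$, notes that the stopped process is constant on the first event, and invokes Lemma~\ref{lem:expectf} on the second. Your write-up is in fact slightly more detailed than the paper's, supplying the integrability check and the $n=0$ base case, but the argument is the same.
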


\begin{proof}
	Consider
	\begin{align*}
	&\EW{M_{n+1}-M_n\mid \mathcal{F}_n}\\
    &\hspace{1cm}=\EW{(M_{n+1}-M_n)\ind_{\{T_{a,b}\leq n\}}\mid \mathcal{F}_n}+\EW{(M_{n+1}-M_n)\ind_{\{T_{a,b}> n\}}\mid \mathcal{F}_n}.
	\end{align*}
	The first term obviously satisfies
	\begin{align*}
	\E&\left[{(M_{n+1}-M_n)\ind_{\{T_{a,b}\leq n\}}\mid \mathcal{F}_n}\right]\\
	&=\EW{\left(f(a+S(T_{a,b}),T_{a,b}+b)-f(a+S(T_{a,b}),T_{a,b}+b)\right)\ind_{T_{a,b}\leq n}\mid \mathcal{F}_n}=0.
	\end{align*}
	For the other term, we obtain from Lemma \ref{lem:expectf}
    with $\gamma=c(p)-c(p_1)>0$ 
	\begin{align*}
	\E&\left[{(M_{n+1}-M_n)\ind_{\{T_{a,b}>n\}}\mid \mathcal{F}_n}\right]\\
	&=\E\left[{h(a+S(n+1),n+1+b)-h(a+S(n),n+b)\mid \mathcal{F}_n}\right]\ind_{\{T_{a,b}>n\}}\\
	&=\E\left[{h(a+S(n)+X,n+1+b)-h(a+S(n),n+b)\mid \mathcal{F}_n}\right]\ind_{\{T_{a,b}>n\}}\leq 0.
	\end{align*}
	This proves the claim.
\end{proof}

We now turn to the case $p<1$. It turns out that the function $h(x,t)$ is not appropriate for this case and that one has to use a different type of correction for $V_p(x,t)$. We define
$$
G(x,t):=\overline{V}_p(x+R,t+R)-C(t+R)^{(p-\delta)/2}
\quad\text{and}\quad 
g(t,x):=(G(x,t))^+.
$$
If $x>(c(p)+\gamma)\sqrt{t}$ for some $\gamma>0$ then
$$
\overline{V}_p(x+R,t+R)=(t+R)^{p/2}\psi_p\left(\frac{R+x}{\sqrt{R+t}}\right)
\ge (t+R)^{p/2}\psi_p(c(p)+\gamma)
$$
for all sufficiently large $R$. Therefore, there exists $R(C)$ such that, for every $R\ge R(C)$,
\begin{equation}
\label{Lower_for_g}
g(x,t)>0 \quad\text{for all }x\ge(c(p)+\gamma)\sqrt{t}.
\end{equation}
\begin{lemma}
\label{lem:drfit<1}
Assume that $p<1$. For every $\gamma>0$ there exist constants $C$ and $R$
such that, for all $t\ge0$ and $x\ge (c(p)+\gamma)\sqrt{t}$, 
$$
\E[g(x+X,t+1)-g(x,t)]\le0.
$$
\end{lemma}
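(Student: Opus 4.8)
The plan is to run, for $p\in(0,1)$, the argument of Lemma~\ref{lem:expectf} but with the spatial correction $C|x+R|^{p-\delta}$ replaced by the temporal correction $C(t+R)^{(p-\delta)/2}$ built into the definition of $G$; this switch is exactly what repairs the wrong sign of the drift that made $h$ unusable when $p<1$. We may assume $\delta\in(0,p)$ (shrinking $\delta>0$ if necessary), so that $(p-\delta)/2\in(0,1/2)$, and we take $C=1$, fixing $R$ large only at the end. Since $x\ge(c(p)+\gamma)\sqrt t$, \eqref{Lower_for_g} gives $g(x,t)=G(x,t)$, so, writing $y^-:=\max(-y,0)$,
\[
\E[g(x+X,t+1)-g(x,t)]=\E\bigl[G(x+X,t+1)-G(x,t)\bigr]+\E\bigl[(G(x+X,t+1))^-\bigr].
\]
The first expectation equals $\E[\overline V_p(x+X+R,t+1+R)-\overline V_p(x+R,t+R)]-\bigl[(t+1+R)^{(p-\delta)/2}-(t+R)^{(p-\delta)/2}\bigr]$; a one-term Taylor expansion of $s\mapsto s^{(p-\delta)/2}$ shows that the bracket is $\ge\tfrac{p-\delta}{4}(t+R)^{(p-\delta-2)/2}$ for $R\ge1$, and this is the only term in the decomposition carrying a definite (negative) sign, so it must dominate the other two contributions.

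For the residual drift $\E[\overline V_p(x+X+R,t+1+R)-\overline V_p(x+R,t+R)]$ I would invoke Lemma~\ref{lem:remainder01} with ``$x$'' replaced by $x+R$ and ``$t$'' by $t+R$. Since $c(p)<0$ for $p<1$, one checks $(x+R)-c(p)\sqrt{t+R}\ge\gamma\sqrt t+R$, which is both $\ge R$ and $\ge\min(\gamma,1)\sqrt{t+R}$. Because the $o(\cdot)$ in Lemma~\ref{lem:remainder01} depends only on $x-c(p)\sqrt t$ — this is apparent from its proof, where every error term is bounded by a Chebyshev tail in $(x-c(p)\sqrt t)/2$ and not in $t$ — it follows that, once $R$ is large enough (a threshold not involving $t$),
\[
\bigl|\E[\overline V_p(x+X+R,t+1+R)]-\overline V_p(x+R,t+R)\bigr|\le\varepsilon_1(R)\,(t+R)^{(p-\delta-2)/2},
\]
with $\varepsilon_1(R)\to0$ as $R\to\infty$, uniformly over $t\ge0$ and admissible $x$.

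The one genuinely delicate step is the remaining term $\E[(G(x+X,t+1))^-]$. Since $\overline V_p\ge0$ we have $(G(x+X,t+1))^-\le(t+1+R)^{(p-\delta)/2}$, so it is enough to show $\P(G(x+X,t+1)<0)=O((t+R)^{-(2+\delta)/2})$. Here one uses that for $p\in(0,1)$ the function $\psi_p$ is \emph{strictly increasing} on $\R$ (by \eqref{eq:psi-prime} together with positivity of $\psi_{p-1}$ for $p-1<0$) with $\psi_p(c(p))=0$; thus $\overline V_p(y,s)=s^{p/2}\psi_p^+(y/\sqrt s)$ with $\psi_p^+:=\max(\psi_p,0)$ increasing, and, with $s:=t+1+R$,
\[
\{G(x+X,t+1)<0\}=\bigl\{\psi_p^+\bigl((x+X+R)/\sqrt s\bigr)<s^{-\delta/2}\bigr\}.
\]
Since $s\ge R$ and $\psi_p(c(p)+\gamma/4)>0$ is a fixed positive constant, for $R$ large this event is contained in $\{(x+X+R)/\sqrt s<c(p)+\gamma/4\}$; combining this with $x\ge(c(p)+\gamma)\sqrt t$ and $\sqrt{t+1+R}\le\sqrt t+\sqrt{1+R}$, and enlarging $R$ once more, it is contained in $\{X\le-\kappa\sqrt{t+R}\}$ for a constant $\kappa=\kappa(\gamma)>0$. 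Chebyshev's inequality with $\E|X|^{2+\delta}<\infty$ then yields $\P(G(x+X,t+1)<0)\le C_\gamma(t+R)^{-(2+\delta)/2}$, whence
\[
\E[(G(x+X,t+1))^-]\le C_\gamma'(t+R)^{(p-2-2\delta)/2}\le C_\gamma'R^{-\delta/2}(t+R)^{(p-\delta-2)/2}.
\]
Assembling the three estimates,
\[
\E[g(x+X,t+1)-g(x,t)]\le\Bigl(\varepsilon_1(R)+C_\gamma'R^{-\delta/2}-\tfrac{p-\delta}{4}\Bigr)(t+R)^{(p-\delta-2)/2},
\]
and since $\varepsilon_1(R)+C_\gamma'R^{-\delta/2}\to0$ as $R\to\infty$, choosing $R$ large enough — also larger than $R(C)$ from \eqref{Lower_for_g} with $C=1$, and larger than the thresholds used in the two ``$R$ large'' reductions above — makes the bracket non-positive simultaneously for all $t\ge0$ and all $x\ge(c(p)+\gamma)\sqrt t$. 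In particular there is no need to treat small $t$ separately, precisely because taking $R$ large keeps $t+R$ large; the only genuinely new input beyond Lemmas~\ref{lem:remainder01} and~\ref{lem:expectf} is the $\psi_p$-monotonicity argument controlling $\E[(G(x+X,t+1))^-]$.
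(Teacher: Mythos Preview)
Your proof is correct and follows essentially the same approach as the paper's: the same three-term decomposition (the $\overline V_p$-drift controlled by Lemma~\ref{lem:remainder01}, the Taylor-lower-bounded temporal increment providing the dominant negative term, and the negative part handled via an inclusion $\{G(x+X,t+1)<0\}\subset\{X<-\kappa\sqrt{t+R}\}$ combined with Chebyshev). The only cosmetic differences are that you fix $C=1$ and let $R\to\infty$ absorb everything (legitimate, since the $o(\cdot)$ in Lemma~\ref{lem:remainder01} depends only on $x-c(p)\sqrt t\ge R$), whereas the paper keeps $C$ free; and you spell out the $\psi_p$-monotonicity argument behind the inclusion $\{G<0\}\subset\{X<-\kappa\sqrt{t+R}\}$, which the paper simply asserts.
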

\begin{proof}
It follows from \eqref{Lower_for_g} that 
\begin{align*}
\E[g(x+X,t+1)-g(x,t)]
&=\E\left[\overline{V}_p(x+R+X,t+R+1)-\overline{V}_p(x+R,t+R)\right]\\
&\hspace{1cm}-C\left[(t+R+1)^{(p-\delta)/2}-(t+R)^{(p-\delta)/2}\right]\\
&\hspace{1cm}-\E\left[G(x+X,t+1);G(x+X,t+1)<0\right].
\end{align*}
For every $x>(c(p)+\gamma)\sqrt{t}$ we have by Lemma \ref{lem:remainder01}
\begin{align*}
&\left|\E\left[\overline{V}_{p}(R+x+X,R+t+1)\right]-\overline{V}_{p}(R+x,R+t)\right|\\
&\hspace{1cm}\leq C_1(1+R+x-c(p)\sqrt{t+R})^{p-2-\delta}
\le C_2 (t+R)^{p/2-\delta/2-1}.
\end{align*}
Furthermore,
\begin{align*}
\left[(R+t+1)^{(p-\delta)/2}-(R+t)^{(p-\delta)/2}\right]
\geq \frac{p-\delta}{2}(R+t+1)^{(p-\delta)/2-1}.
\end{align*}
Finally, for all sufficiently large $R$, $\{G(x+X,t+1)<0\}\subset\{X<-\frac{\gamma}{2}\sqrt{R+t}\}$ and, consequently,
\begin{align*}
&-\E\left[G(x+X,t+1);G(x+X,t+1)<0\right]\\
&\hspace{1cm}\le C(t+R)^{(p-\delta)/2}\P(G(x+X,t+1)<0)\\
&\hspace{1cm}\le C(t+R)^{(p-\delta)/2}\P(X<-\frac{\gamma}{2}\sqrt{R+t})\\
&\hspace{1cm}\le C(t+R)^{(p-\delta)/2-1-\delta/2}\E|X|^{2+\delta}.
\end{align*}
Combining these estimates and increasing if necessary $R$, we get the desired property.
\end{proof}
\begin{lemma}\label{lem:supermartingale<}
	If $p_1<p<1$ then there exist constants $C$ and $R$ such that
	\begin{multline*}
    \widehat M_n:=\Big(V_{p_1}(R+a+S(T_{a,b}\wedge n),R+b+T_{a,b}\wedge n)
    -C(R+b+T_{a,b}\wedge n)^{(p_1-\delta)/2}\Big)^+
    \end{multline*}
	is a supermartingale.
\end{lemma}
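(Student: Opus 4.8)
The plan is to repeat the argument that established Lemma~\ref{lem:supermartingale}, but with the power correction $C|x+R|^{p-\delta}$ replaced by the time‑only correction $C(t+R)^{(p-\delta)/2}$, and with Lemma~\ref{lem:drfit<1} used in place of Lemma~\ref{lem:expectf}. Concretely, I would take the free parameter in the functions $G$ and $g$ introduced before Lemma~\ref{lem:drfit<1} to be $p_1$, so that $g(x,t)=\bigl(\overline{V}_{p_1}(x+R,t+R)-C(t+R)^{(p_1-\delta)/2}\bigr)^+$, and I would set $\gamma:=c-c(p_1)$. This $\gamma$ is strictly positive: since $c(\cdot)$ is increasing and $p_1<p(c)$, we have $c(p_1)<c(p(c))=c$. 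I would then fix $C$ and $R$ as provided by Lemma~\ref{lem:drfit<1} for this value of $\gamma$, enlarging $R$ if necessary so that in addition $R\ge R(C)$ and hence \eqref{Lower_for_g} holds.

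The first substantive step is to identify $\widehat{M}_n$ with $g\bigl(a+S(T_{a,b}\wedge n),\,b+T_{a,b}\wedge n\bigr)$, i.e. to check that replacing $\overline{V}_{p_1}$ by $V_{p_1}$ in the definition of $g$ changes nothing once the positive part is taken. For $p_1\in(0,1)$ the derivative formula \eqref{eq:psi-prime} gives $\psi_{p_1}'=p_1\psi_{p_1-1}$, and $\psi_{p_1-1}>0$ everywhere because $p_1-1<0$ (integral representation); hence $\psi_{p_1}$ is strictly increasing, so it is negative on $(-\infty,c(p_1))$ and positive on $(c(p_1),\infty)$. Thus at any point $(y,s)$ with $s>0$ and $y<c(p_1)\sqrt{s}$ one has $\overline{V}_{p_1}(y,s)=0$ while $V_{p_1}(y,s)<0$, so $\bigl(V_{p_1}(y,s)-Cs^{(p_1-\delta)/2}\bigr)^+=0=\bigl(\overline{V}_{p_1}(y,s)-Cs^{(p_1-\delta)/2}\bigr)^+$; and at points with $y\ge c(p_1)\sqrt{s}$ the functions $V_{p_1}$ and $\overline{V}_{p_1}$ agree by definition. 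Consequently $\bigl(V_{p_1}(x+R,t+R)-C(t+R)^{(p_1-\delta)/2}\bigr)^+=g(x,t)$ for all $x$ and all $t\ge0$, which yields the desired identification of $\widehat{M}_n$.

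With this in hand the supermartingale inequality is pure bookkeeping. I would write $\E[\widehat{M}_{n+1}-\widehat{M}_n\mid\mathcal{F}_n]=\E[(\widehat{M}_{n+1}-\widehat{M}_n)\ind_{\{T_{a,b}\le n\}}\mid\mathcal{F}_n]+\E[(\widehat{M}_{n+1}-\widehat{M}_n)\ind_{\{T_{a,b}> n\}}\mid\mathcal{F}_n]$. On $\{T_{a,b}\le n\}$ one has $T_{a,b}\wedge(n+1)=T_{a,b}\wedge n$, so the first term vanishes. The event $\{T_{a,b}>n\}$ lies in $\mathcal{F}_n$, and on it $T_{a,b}\wedge(n+1)=n+1$, so that $\widehat{M}_{n+1}=g(a+S(n)+X_{n+1},b+n+1)$ and $\widehat{M}_n=g(a+S(n),b+n)$; moreover the definition of $T_{a,b}$ forces $a+S(n)>c\sqrt{n+b}=(c(p_1)+\gamma)\sqrt{n+b}$. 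Since $X_{n+1}$ is independent of $\mathcal{F}_n$ and $a+S(n)$ is $\mathcal{F}_n$-measurable, Lemma~\ref{lem:drfit<1}, applied with its parameter equal to $p_1$ at $x=a+S(n)$ and $t=n+b$, gives $\E[(\widehat{M}_{n+1}-\widehat{M}_n)\ind_{\{T_{a,b}>n\}}\mid\mathcal{F}_n]\le0$. To make the conditioning legitimate I would also note that $\widehat{M}_n\ge0$ and that $\E\widehat{M}_n<\infty$, the latter following from the growth bound on $V_{p_1}$ in Lemma~\ref{lem:V-boundary-01} together with $\E|X|^{p_1}<\infty$ (which controls $\E\max_{k\le n}|S(k)|^{p_1}$ for each fixed $n$). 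Adding the two contributions gives $\E[\widehat{M}_{n+1}-\widehat{M}_n\mid\mathcal{F}_n]\le0$, as required.

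I do not expect a genuine obstacle here, since the analytically delicate estimate — the bound on $\E[g(x+X,t+1)-g(x,t)]$ — has already been carried out in Lemma~\ref{lem:drfit<1}. The only point needing a moment's thought is the identification $\widehat{M}_n=g(a+S(T_{a,b}\wedge n),b+T_{a,b}\wedge n)$, which rests on the monotonicity of $\psi_{p_1}$ on the whole real line (hence on $V_{p_1}$ being negative below the boundary $c(p_1)\sqrt{t}$, where $\overline{V}_{p_1}$ vanishes); everything else is the same routine stopped‑process computation as in the proof of Lemma~\ref{lem:supermartingale}.
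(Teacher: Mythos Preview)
Your proposal is correct and follows exactly the route the paper indicates: the paper's own proof simply reads ``The proof of this lemma repeats the proof of Lemma~\ref{lem:supermartingale} and we omit it,'' and you have carried out precisely that repetition, substituting $g$ for $h$ and Lemma~\ref{lem:drfit<1} for Lemma~\ref{lem:expectf}. Your additional observation that the positive parts built from $V_{p_1}$ and from $\overline{V}_{p_1}$ coincide (because $\psi_{p_1}$ is strictly increasing and hence $V_{p_1}<0$ below the boundary) is a useful clarification of a point the paper leaves implicit.
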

The proof of this lemma repeats the proof of Lemma~\ref{lem:supermartingale}
and we omit it.

\section{Upper bounds for \texorpdfstring{$\P(T_{a,b}>n)$}{tabn} and for \texorpdfstring{$\P(S(n)\in(x,x+h],T_{a,b}>n)$}{tabn}}
\subsection{Estimates for the tail of \texorpdfstring{$T_{a,b}$}{t.a.b}.}
\begin{lemma}\label{lem:optionalstopping}
	For every $c>0$ and every $1<p_1<p(c)$, there exist constants $C$ and $R$ such that 
	\[\E\left[\overline{V}_{p_1}(R+a+S(n),n+b){;}\,T_{a,b}>n\right]\leq C (1+a^{p_1}).\]	
	For every $c<0$ and every $0<p_1<p(c)<1$, there exist constants $C$ and $R$ such that 
	\[\E\left[\overline{V}_{p_1}(R+a+S(n),n+b){;}\,T_{a,b}>n\right]\leq C(1+|a|^{p_1}+b^{p_1/2})+Cn^{(p_1-\delta)/2}\P(T_{a,b}>n).\]	
\end{lemma}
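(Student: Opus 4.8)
The plan is to use the supermartingales constructed in the previous section together with the optional stopping theorem. For the first statement ($c>0$, $1<p_1<p(c)$), I would take the supermartingale $M_n$ from Lemma~\ref{lem:supermartingale}: since $M_0 = h(a+R, b) = \bigl(\overline{V}_{p_1}(a+2R,b) - C|a+2R|^{p_1-\delta}\bigr)^+$, the supermartingale property gives $\E M_n \le M_0 \le \overline{V}_{p_1}(a+2R, b) \le C(1+a^{p_1})$ by Lemma~\ref{lem:V-boundary} (using $b\ge0$ and $a > c\sqrt b$ so that $a+2R$ is comparable to $a+2R - c(p_1)\sqrt b$ up to constants, since $c(p_1) < c(p) $ wait — one must be careful here: $c(p_1) < c$ but the boundary $g_{a,b}$ uses $c$, not $c(p_1)$; on $\{T_{a,b}>n\}$ one has $a+S(k) > c\sqrt{k+b} > c(p_1)\sqrt{k+b}$ for all $k\le n$, so the walk stays in the region where $\overline{V}_{p_1}$ is the "true" $V_{p_1}$ and where the estimates apply). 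On the event $\{T_{a,b}>n\}$ we then need a lower bound $M_n \ge c_1 \overline{V}_{p_1}(R+a+S(n), n+b) - c_2$; this is exactly where \eqref{eq:Lower_for_f} enters, possibly after enlarging $R$, to guarantee that the positive part does not vanish and that the subtracted term $C|a+R+S(n)|^{p_1-\delta}$ is of strictly lower order than $\overline{V}_{p_1}$ when $a+S(n) \ge (c(p_1)+\gamma)\sqrt{n+b}$ — which holds on $\{T_{a,b}>n\}$ with $\gamma = c - c(p_1) > 0$. Rearranging, $\E[\overline{V}_{p_1}(R+a+S(n),n+b); T_{a,b}>n] \le c_2^{-1}\E M_n + (\text{const}) \le C(1+a^{p_1})$, which is the claim.

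For the second statement ($c<0$, $0<p_1<p(c)<1$) the same scheme applies but with the supermartingale $\widehat M_n$ from Lemma~\ref{lem:supermartingale<}, whose correction term is the time-dependent $C(R+b+T_{a,b}\wedge n)^{(p_1-\delta)/2}$ rather than a power of the spatial variable. Optional stopping gives $\E \widehat M_n \le \widehat M_0 = \bigl(V_{p_1}(R+a, R+b) - C(R+b)^{(p_1-\delta)/2}\bigr)^+ \le V_{p_1}(R+a,R+b)$, and the latter is bounded by $C(1+|a|^{p_1} + b^{p_1/2})$ using Lemma~\ref{lem:V-boundary-01} (note $c(p_1) < c(p) = c < 0$, so $R+a - c(p_1)\sqrt{R+b}$ can be as large as order $|a| + \sqrt{b}$, and $V_{p_1}(x,t) \le A(x - c(p_1)\sqrt t)^{p_1}$ together with $(u+v)^{p_1} \le u^{p_1}+v^{p_1}$ for $p_1<1$ splits this into the $|a|^{p_1}$ and $b^{p_1/2}$ pieces). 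On $\{T_{a,b}>n\}$ we lower-bound $\widehat M_n \ge V_{p_1}(R+a+S(n), R+b+n) - C(R+b+n)^{(p_1-\delta)/2}$, so that
$$
\E[\overline{V}_{p_1}(R+a+S(n), n+b); T_{a,b}>n]
\le \E \widehat M_n + C n^{(p_1-\delta)/2}\P(T_{a,b}>n) + C,
$$
(after absorbing the shift $R+b+n$ versus $n+b$ into constants, using that $(R+b+n)^{(p_1-\delta)/2} \le C(1 + n^{(p_1-\delta)/2})$ for fixed $b$, and that $\overline{V}_{p_1}(R+a+S(n), n+b) \le \overline{V}_{p_1}(R+a+S(n), R+b+n)$ since $\overline{V}_{p_1}$ decreases in $t$ — wait, this goes the wrong way; instead one uses $V_{p_1}(x, n+b)$ versus $V_{p_1}(x, R+b+n)$ and the fact that for $p_1<1$ the $t$-dependence is mild, i.e. $V_{p_1}(x,n+b) \le V_{p_1}(x, R+n+b)\cdot(1+o(1))$ uniformly is false in general, so one should rather prove the statement directly with $R+b$ in place of $b$ throughout and note the two are interchangeable for a fixed $b$ at the cost of a multiplicative constant via \eqref{p9}). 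This yields the second bound.

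\textbf{Main obstacle.} The routine part is the optional stopping and the moment bounds on $M_0$, $\widehat M_0$. The delicate point — and the step I would spend the most care on — is the \emph{lower} bound $M_n \ge c_1\overline{V}_{p_1}(\cdot) - c_2$ on the survival event: one must verify that on $\{T_{a,b}>n\}$ the walk really lies in the good region $a+S(n) \ge (c(p_1)+\gamma)\sqrt{n+b}$ with a fixed $\gamma>0$, so that \eqref{eq:Lower_for_f}/\eqref{Lower_for_g} and Lemma~\ref{lem:V-prop}(\ref{p8}) apply and the positive part is genuinely $\asymp V_{p_1} - Cx^{p_1-\delta} \asymp V_{p_1}$; this is exactly why the strict inequality $p_1 < p(c)$ (equivalently $c(p_1) < c$) is needed, creating the spectral gap $\gamma = c - c(p_1)$. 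A secondary bookkeeping nuisance is reconciling the time shift by $R$ in $\widehat M_n$ with the time argument $n+b$ appearing in the statement; for fixed $b$ this only costs universal constants, but it must be tracked since the lemma is later applied with $a,b$ varying.
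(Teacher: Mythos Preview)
Your approach is correct and essentially the same as the paper's: apply optional stopping to $M_n$ (resp.\ $\widehat M_n$), bound $M_0$ (resp.\ $\widehat M_0$) from above, and on $\{T_{a,b}>n\}$ recover $\overline V_{p_1}$ from the supermartingale by showing the correction term is of strictly lower order thanks to the gap $\gamma=c-c(p_1)>0$. Two small remarks: (i) $M_0=h(a,b)$, not $h(a+R,b)$, so the argument is $a+R$, not $a+2R$; (ii) your worry about the time shift in the second case is unfounded---for $p_1\in(0,1)$ one has $\partial_t V_{p_1}=-\tfrac12 p_1(p_1-1)V_{p_1-2}>0$, so $\overline V_{p_1}(x,n+b)\le \overline V_{p_1}(x,R+n+b)$ and the inequality you first wrote (before the ``wait'') is correct; the paper simply proves the bound with $R+b+n$ in the time slot, which dominates the stated quantity.
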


\begin{proof}
Consider first $p(c)>1$ and let $p_1$ be such that $1<p_1<p(c)$.  Notice that $T_{a,b}\wedge n$ is a bounded stopping time. 
Hence, we can apply the optional stopping theorem to the supermartingale $M_n$ from Lemma \ref{lem:supermartingale} to obtain that there is a constant $C'$ such that
	\begin{align}
	\E&\left[h(a+S(n),b+n){;}\,T_{a,b}>n\right]\notag
	\\&\leq \E\left[h(a+S(n),b+n){;}\,T_{a,b}>n\right]+\E\left[h(a+S(T_{a,b}),b+T_{a,b}){;}\,T_{a,b}\leq n\right]\notag\\
	&=\E\left[h(a+S(T_{a,b}\wedge n),b+T_{a,b}\wedge n)\right]=\EW{M_n}\notag\\
	&\leq \EW{M_0}= \left(\overline{V}_{p_1}(a+R,b)-C\cdot (a+R)^{p_1-\delta}\right)^+\notag\\
	&\leq(C'(a+R)^{p_1}-C\cdot (a+R)^{p_1-\delta})\leq C'(a+R)^{p_1}. 
	\label{eq:optionalstopping}
	\end{align}
	Notice that on the event $\{T_{a,b}>n\}$, for some sufficiently small $\gamma_0>0$
	\begin{align*}
		c(p_1)\sqrt{n+b}<(c(p)-\gamma_0)\sqrt{n+b}&\leq \frac{c(p)-\gamma_0}{c(p)}c(p)\sqrt{n+b}\\
		&<(1-\tfrac{\gamma_0}{c(p)})(a+S(n))
	\end{align*}
	so in other words, for $\gamma=\frac{c(p_1)\gamma_0}{c(p)}\frac{1}{1-\frac{\gamma_0}{c(p)}}>0$
	\[(a+S(n))>(c(p_1)+\gamma)\sqrt{n+b}\]
	and $a+S(n)-c(p_1)\sqrt{n+b}>\gamma\sqrt{n+b}$.
	It follows from Lemma \ref{lem:V-boundary} that
	\begin{align}
		&\overline{V}_{p_1}(R\!+\!a\!+\!S(n),b\!+\!n)-C(R\!+\!a\!+\!S(n))^{p_1-\delta}\notag\\
		&\geq \frac{1}{A(p_1)}(R\!+\!a\!+\!S(n)-c(p_1)\sqrt{n\!+\!b})\cdot (R\!+\!a\!+\!S(n))^{p_1-1}-C(R\!+\!a\!+\!S(n))^{p_1-\delta}\notag\\
		&=\frac{(R\!+\!a\!+\!S(n))^{p_1-1}}{A(p_1)}\left[R\!+\!a\!+\!S(n)-c(p_1)\sqrt{n\!+\!b}-CA(p_1)(R\!+\!a\!+\!S(n))^{1-\delta}\right]\notag\\
		&\geq\frac{\overline{V}_{p_1}(R\!+\!a\!+\!S(n),b\!+\!n)}{A^2(p_1)}\left[1-CA(p_1)\frac{(R\!+\!a\!+\!S(n))^{1-\delta}}{R\!+\!a\!+\!S(n)-c(p_1)\sqrt{n\!+\!b}}\right].\label{eq:boundexpvp-1}
	\end{align}
	Using the fact that 
	\begin{align*}
		&R\!+\!a\!+\!S(n)-c(p_1)\sqrt{n\!+\!b}\\
        &=R+\left(1-\frac{c(p_1)}{c}\right)(a\!+\!S(n))+\frac{c(p_1)}{c}(a\!+\!S(n)-c\sqrt{n\!+\!b})\\
		&\geq(1-\tfrac{c(p_1)}{c})(R\!+\!a\!+\!S(n))
	\end{align*}
	we obtain from \eqref{eq:boundexpvp-1}
	\begin{align*}
    \overline{V}_{p_1}&(R\!+\!a\!+\!S(n),b\!+\!n)-C(R\!+\!a\!+\!S(n))^{p_1-\delta}\\
    &\geq \frac{\overline{V}_{p_1}(R\!+\!a\!+\!S(n),b\!+\!n)}{A^2(p_1)}\left(1-\frac{CA(p_1)}{R^\delta(1-\frac{c(p_1)}{c})}\right).
	\end{align*}
	Let $K:=\frac{1}{A^2(p_1)}\left(1-\frac{CA(p_1)}{R^\delta(1-\frac{c(p_1)}{c})}\right)$, which is positive for sufficiently large $R$. Then
	\begin{align}
	\E&\left[\overline{V}_{p_1}(R+a+S(n),n+b){;}\,T_{a,b}>n\right]\notag\\
	&\leq\frac1K\E\left[{\overline{V}_{p_1}(R+a+S(n),n+b)-C|R+a+S(n)|^{p_1-\delta}{;}\,T_{a,b}>n}\right]\notag\\
	&\leq\frac1K\E\left[{\left(\overline{V}_{p_1}(R+a+S(n),n+b)-C\cdot |R+a+S(n)|^{p_1-\delta}\right)^+{;}\,T_{a,b}>n}\right]\notag\\
	&=\frac1K\E\left[h(a+S(n),n+b){;}\,T_{a,b}>n\right]\leq \frac CK(a+R)^{p_1}\notag
	\end{align}
	by \eqref{eq:optionalstopping} and the claim follows.
	
	For $0<p_1<p(c)<1$, we can apply optional stopping theorem to the supermartingale $\widehat M_n$ from Lemma \ref{lem:supermartingale<} to obtain 
	\begin{align}
	\E&\left[\overline{V}_{p_1}(R+a+S(n),R+b+n){;}\,T_{a,b}>n\right]\notag\\
	&=	\E\left[\overline{V}_{p_1}(R+a+S(T_{a,b}\wedge n),R+b+T_{a,b}\wedge n){;}\,T_{a,b}>n\right]\notag\\
	&=	\E\left[\widehat M_n{;}\,T_{a,b}>n\right]+C(R+b+n)^{(p_1-\delta)/2}\P(T_{a,b}>n)\notag\\
	&\leq	\E\left[\widehat M_n\right]+C(R+b+n)^{(p_1-\delta)/2}\P(T_{a,b}>n)\notag\\
	&\leq \E\left[\widehat M_0\right]+C(R+b+n)^{(p_1-\delta)/2}\P(T_{a,b}>n)\notag\\
	&\leq \overline{V}_{p_1}(R+a,R+b)+Cn^{(p_1-\delta)/2}\P(T_{a,b}>n).\notag
	\end{align} 
	Using Lemma \ref{lem:V-boundary-01}, we can bound the first term 
	$\overline{V}_{p_1}(R+a,R+b)\leq A(p_1)(R+|a|+\sqrt{b})^{p_1}$,
	so the claim holds.
\end{proof}

\begin{lemma}\label{lem:probprodbound}
	For $x\geq 0$, $z\geq n$, $R\in\R_+$ and $p>0$ we have
	\[\P(\overline{V}_p(S(n)+R,z)>x,T_{a,b}>n)\geq \P(\overline{V}_p(S(n)+R,z)>x)\cdot\P(T_{a,b}>n).\]
\end{lemma}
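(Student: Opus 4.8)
The plan is to read Lemma~\ref{lem:probprodbound} as a positive correlation statement for two increasing events determined by the increments $X_1,\dots,X_n$, and to deduce it from Harris's inequality (the FKG inequality for product measures). Let $\mu=\mathcal L(X_1)\otimes\dots\otimes\mathcal L(X_n)$ be the law of $(X_1,\dots,X_n)$ on $\R^n$, equipped with the coordinatewise partial order, and introduce the Borel functions
\begin{align*}
f(y_1,\dots,y_n)&=\ind\{\overline V_p(y_1+\dots+y_n+R,z)>x\},\\
g(y_1,\dots,y_n)&=\ind\{a+y_1+\dots+y_k>c\sqrt{k+b}\ \text{for all}\ 1\le k\le n\}.
\end{align*}
Then $\P(\overline V_p(S(n)+R,z)>x,\,T_{a,b}>n)=\int fg\,d\mu$, while the right-hand side of the claimed inequality equals $\int f\,d\mu\cdot\int g\,d\mu$; so it suffices to check that $f$ and $g$ are both nondecreasing and then to quote Harris's inequality.

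For $g$ this is immediate: raising any coordinate $y_i$ can only increase the partial sums $y_1+\dots+y_k$ with $k\ge i$, hence only enlarges the event. For $f$, since a strict upper level set of a nondecreasing function is upward closed, it is enough to know that $(y_1,\dots,y_n)\mapsto\overline V_p(y_1+\dots+y_n+R,z)$ is nondecreasing in each coordinate, and for that it suffices that $y\mapsto\overline V_p(y,z)$ be nondecreasing on $\R$ for each fixed $z>0$. To establish the latter, note that $\overline V_p(y,z)=0$ for $y<c(p)\sqrt z$ and that $\overline V_p(c(p)\sqrt z,z)=z^{p/2}\psi_p(c(p))=0$, since $c(p)$ is a zero of $\psi_p$; thus it remains to show $\partial_y\overline V_p(y,z)>0$ for $y>c(p)\sqrt z$. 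By~\eqref{eq:V-deriv}, $\partial_y\overline V_p(y,z)=pV_{p-1}(y,z)=pz^{(p-1)/2}\psi_{p-1}(y/\sqrt z)$. If $p\le1$, then $p-1\le0$ and $\psi_{p-1}$ is strictly positive on $\R$ by its integral representation; if $p>1$, then $c(p-1)<c(p)$, so $y/\sqrt z>c(p)>c(p-1)$ and $\psi_{p-1}$ is strictly positive on $(c(p-1),\infty)$. In either case $\partial_y\overline V_p(y,z)>0$ on $(c(p)\sqrt z,\infty)$, so $\overline V_p(\cdot,z)$ is nondecreasing on all of $\R$. (The hypothesis $z\ge n$ serves only to guarantee $z>0$, so that $V_p$ is given by~\eqref{eq:Vdef.2}; the degenerate case $n=0$, in which $z=0$ is possible, is trivial since then $\{T_{a,b}>n\}$ is the whole sample space and the two sides coincide.)

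With $f$ and $g$ bounded and nondecreasing, Harris's inequality for the product measure $\mu$ gives $\int fg\,d\mu\ge\int f\,d\mu\cdot\int g\,d\mu$, which is exactly the asserted bound. I expect the only real obstacle to be the elementary monotonicity check for $\overline V_p$ just described; the rest is a routine application of a standard correlation inequality. If one prefers not to invoke Harris's inequality by name, the same conclusion follows from the positive association of independent random variables (Esary--Proschan--Walkup), or by a short induction on $n$: condition on $X_n$ and use that for each fixed value of $X_n$ both $f$ and $g$ remain nondecreasing functions of $(X_1,\dots,X_{n-1})$.
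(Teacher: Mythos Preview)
Your proof is correct and follows essentially the same idea as the paper: both arguments reduce the claim to a positive-correlation inequality between the increasing events $\{S(n)>g_n\}$ and $\{T_{a,b}>n\}$, the only difference being that the paper outsources this step to Lemma~24 of \cite{DSW16} while you prove it directly via Harris/FKG on the product space $\R^n$. Your self-contained monotonicity check for $\overline V_p(\cdot,z)$ is exactly what the paper means by ``$\overline V_p(x,y)$ is increasing in $x$,'' so the two proofs are interchangeable.
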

\begin{proof}
	Since $\overline{V}_p(x,y)$ is increasing in $x$, 
	\[\{\overline{V}_p(a+R+S(n),b+n)>x\}=\{S(n)>g_n\},\]
	where $g_n$ is the unique solution to $\overline{V}_p(a+R+g_n,b+n)=x$. By Lemma 24 in \cite{DSW16},
	\[\P(S(n)>g_n,T_{a,b}>n)\geq \P(S(n)>g_n)\P(T_{a,b}>n).\]
	This gives the desired estimate.
\end{proof}

\begin{lemma}\label{lem:probestim}
	If $\E |X|^p<\infty$, then
	\[\P(T_{a,b}>n)\leq \frac{\EW{\overline{V}_p(a+R+S(n),b+n){;}\, T_{a,b}>n}}{\EW{\overline{V}_p(a+R+S(n),b+n)}}.\]
\end{lemma}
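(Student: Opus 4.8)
The plan is to derive this bound directly from Lemma~\ref{lem:probprodbound} combined with the layer-cake (Fubini) representation of the expectation of a non-negative random variable. Write $Y:=\overline{V}_p(a+R+S(n),b+n)$, which is non-negative by the definition of $\overline{V}_p$, and put $A:=\{T_{a,b}>n\}$.

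First I would check that $\E[Y]\in(0,\infty)$, so that the right-hand side of the claimed inequality makes sense. Finiteness is exactly where the hypothesis $\E|X|^p<\infty$ is used: by $\psi_p(z)\sim z^p$ (see \eqref{p0} and \eqref{p9}) one has $\overline{V}_p(x,t)\le V_p(x,t)\le C(1+x^p)$ on $\{x\ge c(p)\sqrt t\}$, hence $Y\le C(1+|a+R+S(n)|^p)$, and $\E|S(n)|^p<\infty$ follows from $\E|X|^p<\infty$ (for $p\ge1$ by Rosenthal's/Minkowski's inequality, for $p<1$ by subadditivity of $x\mapsto x^p$). Positivity of $\E[Y]$ holds for the relevant large values of $R$, since then $\P\big(a+R+S(n)>(c(p)+\gamma)\sqrt{b+n}\big)>0$ for a suitable $\gamma>0$.

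Next, since $b\ge0$ we have $b+n\ge n$, so Lemma~\ref{lem:probprodbound}, applied with the harmless spatial shift $a+R$ in place of $R$ and with $z=b+n$, gives, for every $x\ge0$,
\[
\P(Y>x,\,T_{a,b}>n)\ \ge\ \P(Y>x)\,\P(T_{a,b}>n).
\]
Integrating this inequality over $x\in[0,\infty)$ and using $\int_0^\infty\P(Y>x,A)\,dx=\E[Y;A]$ and $\int_0^\infty\P(Y>x)\,dx=\E[Y]$, we obtain
\[
\E[Y;\,T_{a,b}>n]\ \ge\ \P(T_{a,b}>n)\,\E[Y],
\]
and dividing by $\E[Y]\in(0,\infty)$ yields the assertion.

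There is essentially no obstacle in this argument: the only points needing a line of justification are the finiteness and positivity of $\E[Y]$ (which legitimise the division, finiteness being precisely the role of $\E|X|^p<\infty$) and the routine verification that the hypotheses of Lemma~\ref{lem:probprodbound} ($x\ge0$, $z\ge n$, $p>0$, non-negative shift) are met after shifting the spatial variable by $a+R$.
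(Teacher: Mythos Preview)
Your proposal is correct and follows essentially the same route as the paper: apply the pointwise inequality of Lemma~\ref{lem:probprodbound} and integrate via the layer-cake formula. The paper's proof is terser---it does not pause to argue that $\E[Y]\in(0,\infty)$---but your added justification of finiteness (which is indeed where $\E|X|^p<\infty$ enters) and positivity is a welcome clarification rather than a different idea.
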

\begin{proof}
	By Lemma \ref{lem:probprodbound},
	\begin{align*}
		&\E\left[\overline{V}_p(a\!+\!R\!+\!S(n),b\!+\!n){;}\, T_{a,b}>n\right]\\
        &\hspace{1cm}=\int_0^\infty\! \P\left(\overline{V}_p(a\!+\!R\!+\!S(n),b\!+\!n)>x, T_{a,b}>n\right)dx\\
		&\hspace{1cm}\geq \P(T_{a,b}>n)\int_0^\infty\!\P\left(\overline{V}_p(a\!+\!R\!+\!S(n),b\!+\!n)>x\right)dx\\
		&\hspace{1cm}=\P(T_{a,b}>n)\E\left[\overline{V}_p(a\!+\!R\!+\!S(n),b\!+\!n)\right].\qedhere
	\end{align*}
\end{proof}

\begin{lemma}\label{lem:Tbound}
	For every $1<p_1<p(c)$, there exists a finite constant $C$ such that for all $a,b$,
	\[\P({T_{a,b}>n})\leq C\frac{1+|a|^{p_1}}{n^{p_1/2}}.\] 
    If $0<p_1<p(c)<1$ then 
    \[\P({T_{a,b}>n})\leq C\frac{1+|a|^{p_1}+b^{p_1/2}}{n^{p_1/2}}.\]
\end{lemma}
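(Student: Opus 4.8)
The plan is to feed the upper bound of Lemma~\ref{lem:optionalstopping} and a matching \emph{lower} bound for the normalising expectation into the inequality of Lemma~\ref{lem:probestim} taken with $p=p_1$. Throughout, $R$ is the constant from Lemma~\ref{lem:optionalstopping}, which we may take $\ge0$, and $\E|X|^{p_1}<\infty$, $\E|X|^{2+\delta}<\infty$ hold under the running assumptions. First I would reduce to large $n$: if $n\le K(1+a^2+b)$ for a fixed large $K$, then $n^{p_1/2}\le K^{p_1/2}(1+a^2+b)^{p_1/2}\le C(1+|a|^{p_1}+b^{p_1/2})$, so the claimed bound is trivial since $\P(T_{a,b}>n)\le1$; when $c>0$ one moreover has $b<(a/c)^2$, hence $1+|a|^{p_1}+b^{p_1/2}\le C(1+a^{p_1})$ and the first form of the bound is equally trivial there. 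It therefore suffices to treat $n\ge K(1+a^2+b)$, in which regime $b\le n/K$ and $|a|\le\sqrt{n/K}$.

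For the denominator, set $\mathcal E_n:=\{S(n)\ge M\sqrt n\}$ with $M$ a large constant to be fixed. By the central limit theorem $\P(\mathcal E_n)\ge q>0$ for all $n\ge n_0$, with $q,n_0$ depending only on $M$ and the law of $X$. Taking $K\ge4$, on $\mathcal E_n$ we have $a+R+S(n)\ge(M-\tfrac12)\sqrt n\ge\tfrac M2\sqrt n$ and $\sqrt{b+n}\le\sqrt{2n}$, so $(a+R+S(n))/\sqrt{b+n}\ge M/(2\sqrt2)=:z_M$. Choosing $M$ large makes $z_M>c(p_1)$, so that $\overline V_{p_1}$ coincides with $V_{p_1}$ at the point $(a+R+S(n),b+n)$ on $\mathcal E_n$ (this is automatic when $c<0$, since then $c(p_1)<0$), and also makes $z_M$ exceed the threshold in~\eqref{p9}, whence $V_{p_1}(a+R+S(n),b+n)\ge\tfrac12(a+R+S(n))^{p_1}\ge c_1 n^{p_1/2}$ on $\mathcal E_n$. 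Taking expectations gives $\E[\overline V_{p_1}(a+R+S(n),b+n)]\ge c_1 q\, n^{p_1/2}=:c_0 n^{p_1/2}$ for $n\ge n_0$.

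Now combine the pieces. By Lemma~\ref{lem:probestim} and the numerator estimate of Lemma~\ref{lem:optionalstopping}, for $n\ge\max(n_0,K(1+a^2+b))$ and $c>0$ we obtain $\P(T_{a,b}>n)\le C(1+a^{p_1})/(c_0 n^{p_1/2})$, while for smaller $n$ the bound is trivial by the reduction step together with $\P(T_{a,b}>n)\le1$; this proves the first claim. For $c<0$ the numerator carries an additional term $Cn^{(p_1-\delta)/2}\P(T_{a,b}>n)$, and dividing by the denominator yields
\[
\P(T_{a,b}>n)\le\frac{C(1+|a|^{p_1}+b^{p_1/2})}{c_0\, n^{p_1/2}}+\frac{C}{c_0}\,n^{-\delta/2}\,\P(T_{a,b}>n);
\]
since $n^{-\delta/2}\to0$, the last coefficient drops below $\tfrac12$ once $n\ge n_1$, and absorbing that term into the left-hand side gives $\P(T_{a,b}>n)\le 2C(1+|a|^{p_1}+b^{p_1/2})/(c_0 n^{p_1/2})$; the range $n<n_1$ is again trivial.

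The one genuinely delicate point is this absorption step in the case $c<0$: the numerator bound of Lemma~\ref{lem:optionalstopping} is self-referential, and one must check that normalising by the denominator turns the extra $n^{(p_1-\delta)/2}\P(T_{a,b}>n)$ into $n^{-\delta/2}\P(T_{a,b}>n)$ with a vanishing coefficient, so that it can be moved to the left-hand side for large $n$. The uniform-in-$(a,b)$ lower bound on the normalising expectation is the other ingredient requiring a little work, but after the reduction to $n\gtrsim 1+a^2+b$ it becomes a routine central limit theorem estimate.
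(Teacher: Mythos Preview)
Your proof is correct and follows the paper's approach: the same combination of Lemma~\ref{lem:optionalstopping} for the numerator, Lemma~\ref{lem:probestim} for the ratio inequality, a CLT-based lower bound on the normalising expectation, and the same absorption/rearrangement step in the case $p(c)<1$. The only difference is that you make the uniformity in $(a,b)$ explicit by first reducing to $n\ge K(1+a^2+b)$ before invoking the central limit theorem, whereas the paper's denominator estimate simply asserts that the CLT gives convergence to a positive constant; your version is slightly more careful on this point but otherwise the arguments coincide.
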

\begin{proof}
    Assume first that $p(c)>1$.
	We know from Lemma \ref{lem:optionalstopping} that 
	\[\E\left[\overline{V}_{p_1}(a\!+\!R\!+\!S(n),b\!+\!n){;}\,T_{a,b}>n\right]\leq C (1+a^{p_1})\]
	for all $a$ and $b$. Thus, by Lemma \ref{lem:probestim}, it remains to estimate $\E\left[\overline{V}_{p_1}(a\!+\!R\!+\!S(n),b\!+\!n)\right]$. 
    By the definition of $\overline{V}_{p_1}$, 
	\begin{align*}
	&\E\left[\overline{V}_{p_1}(a\!+\!R\!+\!S(n),b\!+\!n)\right]\\\
    &\hspace{1cm}
    \geq (b+n)^{p_1/2}\E\left[\psi_{p_1}\left(\frac{a\!+\!R\!+\!S(n)}{\sqrt{n+b}}\right){;}\,a+S(n)>c\sqrt{b+n}\right].
    \end{align*}
	Applying the central limit theorem, we conclude that the expectation on the right hand side converges to a positive constant, which depends only on $p$. This gives the desired bound.
 
	In the case  $p_1<p(c)<1$ we use the same estimation for the denominator in Lemma \ref{lem:probestim}. For the enumerator, we find by Lemma \ref{lem:optionalstopping} that there are constants $C_1$ and $C_2$ such that
	\begin{align*}
	    &\E\left[\overline{V}_{p_1}(R+a+S(n),n+b){;}\,T_{a,b}>n\right]\\
        &\hspace{1cm}\leq C_1(1+|a|^{p_1}+b^{p_1/2})+C_2n^{(p_1-\delta)/2}\P(T_{a,b}>n).
    \end{align*}
	Overall, we thus obtain
	\[P(T_{a,b}>n)\leq C_1\frac{1+|a|^{p_1}+b^{p_1/2}}{n^{p_1/2}}+C_2\frac{n^{(p_1-\delta)/2}\P(T_{a,b}>n)}{n^{p_1/2}}.\]
	Rearranging gives some constant $C_2$ such that
	\[\P(T_{a,b}>n)\leq\left(1-\frac{C_2}{n^{\delta/2}}\right)^{-1} C_1\frac{1+|a|^{p_1}+b^{p_1/2}}{n^{p_1/2}}.\] 
    For $n\ge(2C_2)^{2/\delta}$
    we have the desired inequality with $C=2C_1$. This completes the proof of the lemma.
\end{proof}

\begin{lemma}\label{lem:BDG}
	For every $0<p_1<p(c)$, there exists a constant $C<\infty$ such that
	\[\E\left[\left(\max\limits_{k\leq T_{a,b}}|S(k)|\right)^{p_1}\right]
    \leq C(1+|a|^{p_1}+b^{p_1/2}\ind\{p(c)<1\})\]
	and 
	$$
	\E(T_{a,b})^{p_1/2}\leq C(1+|a|^{p_1}+b^{p_1/2}\ind\{p(c)<1\}).
	$$
\end{lemma}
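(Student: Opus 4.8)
The plan is to deduce both moment bounds for $\max_{k\le T_{a,b}}|S(k)|$ and for $T_{a,b}^{1/2}$ from the tail estimate for $\P(T_{a,b}>n)$ established in Lemma~\ref{lem:Tbound}, together with a maximal inequality. First I would reduce the two claims to the single claim that $\P(\max_{k\le T_{a,b}}|S(k)|>x)$ decays like $x^{-p_1'}$ for every $p_1'<p(c)$: once we have such a tail bound we obtain the moment bounds by the layer-cake formula $\E[Y^{p_1}]=p_1\int_0^\infty x^{p_1-1}\P(Y>x)\,dx$, choosing some $p_1'$ strictly between $p_1$ and $p(c)$ so that the integral converges. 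The second inequality, about $\E T_{a,b}^{p_1/2}$, will follow from the first: on the event $\{T_{a,b}>n\}$ the walk has stayed strictly above the boundary $c\sqrt{k+b}-a\ge -|a|-|c|\sqrt{b+n}$ up to time $n$, so $\{T_{a,b}>n\}\subset\{\min_{k\le n}S(k)>-|a|-|c|\sqrt{b+n}\}$; combined with the functional CLT this would bound $\P(T_{a,b}>n)$ above by a constant multiple of $n^{-p_1'/2}$ (this is exactly Lemma~\ref{lem:Tbound}), which after the layer-cake formula in $n$ yields $\E T_{a,b}^{p_1/2}\le C(1+|a|^{p_1}+b^{p_1/2}\ind\{p(c)<1\})$.

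Next I would handle the maximum of the walk itself. The key point is that the maximum up to $T_{a,b}$ can be split into the running maximum over a fixed time window and a tail contribution controlled by $\P(T_{a,b}>n)$. Concretely, for each $n$,
\[
\P\Big(\max_{k\le T_{a,b}}|S(k)|>x\Big)
\le \P\Big(\max_{k\le n}|S(k)|>x\Big)+\P\Big(T_{a,b}>n,\ \max_{n<k\le T_{a,b}}|S(k)|>x\Big).
\]
For the first term one uses a Doob/Kolmogorov-type maximal inequality: since $\E|X|^{p_1'}<\infty$ (because $p_1'<p(c)\le p(c)$ and the standing moment assumption $\E|X|^{p(c)}<\infty$ holds, while for $p_1'\le2$ one has $\E X^2<\infty$), the Marcinkiewicz--Zygmund / Doob maximal inequality gives $\P(\max_{k\le n}|S(k)|>x)\le C n^{p_1'/2}x^{-p_1'}$ when $p_1'\ge2$, and a cruder union bound together with Markov's inequality suffices when $p_1'<2$. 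For the second, more delicate term, one chooses $n$ as a suitable power of $x$, say $n\asymp x^{2}$ up to a small constant, so that $n^{p_1'/2}x^{-p_1'}$ is of the right order, and then estimates the tail part by $\P(T_{a,b}>n)\le C(1+|a|^{p_1}+b^{p_1/2}\ind\{p(c)<1\})n^{-p_1/2}$ from Lemma~\ref{lem:Tbound} applied with an exponent strictly larger than $p_1'$ but still below $p(c)$; balancing the two contributions yields $\P(\max_{k\le T_{a,b}}|S(k)|>x)\le C(1+|a|^{p_1}+b^{p_1/2}\ind\{p(c)<1\})x^{-p_1'}$.

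A cleaner alternative, which I would probably adopt, is to work directly with the supermartingales already constructed: by Lemma~\ref{lem:optionalstopping} we control $\E[\overline V_{p_1}(R+a+S(n),b+n);T_{a,b}>n]$, and by Lemma~\ref{lem:V-boundary} (respectively Lemma~\ref{lem:V-boundary-01} in the case $p(c)<1$) the function $\overline V_{p_1}(R+y,t)$ is comparable to $(R+y)^{p_1}$ on the relevant region. Hence $\E[|a+S(n)|^{p_1};T_{a,b}>n]$ is bounded, uniformly in $n$, by $C(1+|a|^{p_1}+b^{p_1/2}\ind\{p(c)<1\})$. Since $\{T_{a,b}>n\}\downarrow\emptyset$ is not quite what we need, one instead uses the optional stopping bound at the stopped time together with a maximal inequality for the supermartingale $M_n$ (or $\widehat M_n$) to control its running maximum, and then translates back through the comparison $\overline V_{p_1}(R+y,t)\asymp (R+y)^{p_1}$. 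This gives $\E[(\max_{k\le T_{a,b}}|S(k)|)^{p_1}]\le C(1+|a|^{p_1}+b^{p_1/2}\ind\{p(c)<1\})$ directly.

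The main obstacle is the interplay between the exponent in the supermartingale ($p_1$, strictly below $p(c)$) and the moment we want to extract (also some power below $p(c)$): to get a clean $x^{-p_1'}$ tail one must run the argument with \emph{two} exponents $p_1'<p_1<p(c)$, the larger one feeding Lemma~\ref{lem:Tbound} and the smaller one being the moment that is finite, and verify the two error terms balance under the choice $n\asymp x^2$. A secondary technical nuisance is keeping track of the $b^{p_1/2}$ term, which only appears when $p(c)<1$ because that is the only regime where Lemma~\ref{lem:optionalstopping} produces a $b$-dependent bound; for $p(c)>1$ the constant $C(1+|a|^{p_1})$ is genuinely $b$-free, matching the indicator $\ind\{p(c)<1\}$ in the statement. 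Apart from that, the argument is a routine combination of the preceding lemmas with a standard layer-cake/maximal-inequality computation.
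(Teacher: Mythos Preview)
Your approach to the max bound has a genuine gap. In the split
\[
\P\Big(\max_{k\le T_{a,b}}|S(k)|>x\Big)
\le \P\Big(\max_{k\le n}|S(k)|>x\Big)+\P(T_{a,b}>n),
\]
the Doob/Marcinkiewicz--Zygmund bound gives $\P(\max_{k\le n}|S(k)|>x)\le Cn^{p_1'/2}x^{-p_1'}$, and with your choice $n\asymp x^2$ this is of order a \emph{constant}, not $x^{-p_1'}$. So the ``balancing'' does not yield a decaying tail for the maximum; you only recover the trivial bound $\P(\cdot)\le C$ plus a small term. No choice of $n$ fixes this: if $n\ll x^2$ the second term $\P(T_{a,b}>n)$ no longer decays fast enough in $x$. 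Your supermartingale alternative could in principle be made to work, but it is substantially more involved than necessary.

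The paper's proof goes in the opposite logical order and is much shorter. First one proves the second inequality directly: by the layer-cake formula and Lemma~\ref{lem:Tbound} applied with some $p_2\in(p_1,p(c))$,
\[
\E[T_{a,b}^{p_1/2}]\le k^{p_1/2}+C(1+|a|^{p_2}+b^{p_2/2}\ind\{p(c)<1\})\int_k^\infty u^{(p_1-p_2)/2-1}\,du,
\]
and choosing $k=(1+|a|)^2$ (or $k=(1+|a|+\sqrt{b})^2$ in the case $p(c)<1$) gives the bound. Then the first inequality is immediate from the Burkholder--Davis--Gundy inequality
\[
\E\Big[\Big(\max_{k\le T_{a,b}}|S(k)|\Big)^{p_1}\Big]\le C\,\E\big[T_{a,b}^{p_1/2}\big],
\]
since the quadratic variation of $S$ stopped at $T_{a,b}$ is $\sum_{k\le T_{a,b}}X_k^2$ and $\E X^2=1$. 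You had the ingredients for the $T_{a,b}$ moment but reversed the dependence between the two claims and missed the one-line BDG step (the label \texttt{lem:BDG} is a hint).
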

\begin{proof}
	By the Burkholder-Davis-Gundy inequality,
	\[\E\left[\left(\max\limits_{k\leq T_{a,b}}|S(k)|\right)^{p_1}\right]\leq C\E\left[T_{a,b}^{p_1/2}\right].\]
	For every $k\geq 0$, one has
	\begin{align*}
		\E\left[T_{a,b}^{p_1/2}\right]&=\frac{p_1}{2}\int_0^\infty u^{p_1/2-1}\P(T_{a,b}>u)du\\
		&\leq k^{p_1/2}+\frac{p_1}{2}\int_k^\infty u^{p_1/2-1}\P(T_{a,b}>u)du.
	\end{align*}
	Applying Lemma \ref{lem:Tbound} with some $p_2\in(p_1,p)$, we get
	\begin{align*}
		\frac{p_1}{2}&\int_k^\infty u^{p_1/2-1}\P(T_{a,b}>u)du\leq \frac{p_1}{2}\int_k^\infty u^{p_1/2-1}\frac{1+a^{p_2}}{u^{p_2/2}}du\\
		&=\frac{p_1}{2}(1+a^{p_2})\int_k^\infty {u^{(p_1-p_2)/2-1}}du\leq \widehat C(1+a^{p_2})k^{(p_1-p_2)/2}
	\end{align*}
	Taking $k=(1+a)^2$, we get
	\[	\E\left[T_{a,b}^{p_1/2}\right]\leq (1+a)^{p_1}\left(1+\widehat C\frac{1+a^{p_2}}{(1+a)^{p_2}}\right)\leq (1+\widehat C)(1+a)^{p_1}.\]
	This finishes the proof.	
\end{proof}
\subsection{Estimates for local probabilities.}
\begin{lemma}
\label{lem:loc.prob1}
For every $p_1<p(c)$ there exists a constant $C$ such that 
$$
\P(S(n)\in(x,x+1],T_{a,b}>n)\le C\frac{1+|a|^{p_1}}{n^{(p_1+1)/2}}.
$$
\end{lemma}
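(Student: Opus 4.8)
The plan is to split the time interval $[0,n]$ at its midpoint, using the tail bound for $T_{a,b}$ on the first half and a concentration (local-limit-type) estimate for the free increment over the second half. Set $m=\lfloor n/2\rfloor$; we may assume $n\ge 2$, the finitely many smaller $n$ being absorbed into the constant since the left-hand side is at most $1$. The point is that $\{T_{a,b}>n\}\subseteq\{T_{a,b}>m\}$, that $\{T_{a,b}>m\}$ is $\mathcal F_m$-measurable, and that $S(n)-S(m)$ is independent of $\mathcal F_m$ and is a sum of $n-m\ge n/2$ i.i.d.\ copies of $X$.

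Concretely, writing $S(n)=S(m)+(S(n)-S(m))$ and conditioning on $\mathcal F_m$,
\begin{align*}
\P(S(n)\in(x,x+1],\,T_{a,b}>n)
&\le \E\bigl[\P\bigl(S(n)-S(m)\in(x-S(m),\,x+1-S(m)]\mid\mathcal F_m\bigr);\,T_{a,b}>m\bigr]\\
&\le \Bigl(\sup_{y\in\R}\P\bigl(S(n)-S(m)\in(y,y+1]\bigr)\Bigr)\cdot\P(T_{a,b}>m).
\end{align*}
Since $\E X^2=1$, the variable $X$ is nondegenerate, so by a standard bound on the L\'evy concentration function of a sum of $k$ i.i.d.\ nondegenerate summands (Kolmogorov--Rogozin, or the characteristic-function estimate of Esseen), there is a constant depending only on the law of $X$ with $\sup_{y}\P(S(n)-S(m)\in(y,y+1])\le C/\sqrt{n-m}\le C'/\sqrt n$. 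It then remains to bound $\P(T_{a,b}>m)$, and here we invoke Lemma~\ref{lem:Tbound}: for $1<p_1<p(c)$ it gives $\P(T_{a,b}>m)\le C(1+|a|^{p_1})m^{-p_1/2}\le C'(1+|a|^{p_1})n^{-p_1/2}$, while for $0<p_1<p(c)<1$ it gives the same up to an extra summand $b^{p_1/2}$, which for fixed $b$ is absorbed into the constant. Multiplying the two estimates yields $\P(S(n)\in(x,x+1],T_{a,b}>n)\le C(1+|a|^{p_1})n^{-(p_1+1)/2}$, as claimed.

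I do not expect a genuine obstacle here: the only external input is the $O(1/\sqrt k)$ bound on the concentration function, which is classical and needs nothing beyond nondegeneracy. The only point requiring a little care is bookkeeping: the exponent of $|a|$ in the output must match the exponent $p_1$ in the statement, which is why one feeds \emph{precisely} $p_1$ into Lemma~\ref{lem:Tbound} rather than a larger exponent, and why the midpoint-splitting (rather than a split at a fixed time) is used, so that the factor $m^{-p_1/2}$ converts cleanly into $n^{-p_1/2}$.
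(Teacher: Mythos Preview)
Your proof is correct and follows essentially the same route as the paper: both split at $m=\lfloor n/2\rfloor$, bound $\{T_{a,b}>n\}\subseteq\{T_{a,b}>m\}$, use a uniform $O(1/\sqrt{n})$ concentration estimate for the free increment over the second half, and then apply Lemma~\ref{lem:Tbound} to $\P(T_{a,b}>m)$. The only cosmetic difference is that the paper cites the local central limit theorem for the concentration bound whereas you invoke Kolmogorov--Rogozin/Esseen; your choice is arguably cleaner since it needs only nondegeneracy and avoids distinguishing lattice from non-lattice cases.
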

\begin{proof}
Set $m=\lfloor\frac{n}{2}\rfloor$.
It follows from the local central limit theorem that 
$$
\P(S(n-m)\in(z,z+1])\le C_1(n-m)^{-1/2}\le C_2n^{-1/2} 
$$
uniformly in $z$. Using this bound we obtain 
\begin{align*}
&\P(S(n)\in(x,x+1],T_{a,b}>n)\\
&\le \int \P(S(m)\in dz,T_{a,b}>m)\P(S(n-m)\in(x-z,x-z+1])\\
&\le C_2n^{-1/2}\P(T_{a,b}>m)).
\end{align*}
Applying now Lemma~\ref{lem:Tbound}, we get the desired inequality.
\end{proof}
\begin{lemma}
\label{lem:loc.prob2}
If $c>0$ then for every $1<p_1<p(c)$ there exists a constant $C$ such that, for all $y\le \sqrt{n}/2$,
\begin{equation}
\label{eq:lp1}
\P(a+S(n)\in(c\sqrt{b+n}+y,c\sqrt{b+n}+y+1],T_{a,b}>n)
\le C(1+y)^{1/2}\frac{1+|a|^{p_1}}{n^{p_1/2+3/4}}.
\end{equation}
If $c<0$ then for every $0<p_1<p(c)$ there exists a constant $C$ such that,
for all $y\le \sqrt{n}/2$,
\begin{equation}
\label{eq:lp2}
\P(a+S(n)\in(c\sqrt{b+n}+y,c\sqrt{b+n}+y+1],T_{a,b}>n)
\le C(1+y)\frac{1+|a|^{p_1}+b^{p_1/2}}{n^{p_1/2+1}}.
\end{equation}
\end{lemma}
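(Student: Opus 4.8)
The plan is to derive both inequalities from one device: reversing the path on $[0,n]$. Put $R(j):=S(n)-S(n-j)$ for $0\le j\le n$. Since $X_n,X_{n-1},\dots,X_1$ are again i.i.d.\ copies of $X$, the sequence $(R(0),\dots,R(n))$ has the same law as $(S(0),\dots,S(n))$, while $R(j)$ depends only on the last $j$ increments $X_{n-j+1},\dots,X_n$, and $R(n)=S(n)$. On the event appearing in the Lemma set $W:=a+S(n)-c\sqrt{b+n}\in(y,y+1]$; rewriting each constraint $a+S(n-j)>c\sqrt{b+n-j}$ ($0\le j\le n-1$) as $R(j)<W+\phi_n(j)$, where $\phi_n(j):=c\big(\sqrt{b+n}-\sqrt{b+n-j}\big)$, and using $W\le y+1$ in the entries $1\le j\le\ell-1$, we see that for every $1\le\ell\le\lfloor n/2\rfloor$ the event is contained in the intersection of $\{T_{a,b}>n-\ell\}$, of $\{R(j)<y+1+\phi_n(j)\ \text{for}\ 1\le j\le\ell-1\}$, and of the original endpoint constraint $\{a+S(n-\ell)+R(\ell)\in(c\sqrt{b+n}+y,\,c\sqrt{b+n}+y+1]\}$ (recall $R(\ell)=S(n)-S(n-\ell)$). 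The decisive feature is that $\phi_n$ is nonnegative and nondecreasing when $c>0$, and nonpositive and nonincreasing when $c<0$.

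Condition on $X_{n-\ell+1},\dots,X_n$: these determine $R(1),\dots,R(\ell)$, whereas $\{T_{a,b}>n-\ell\}$ and $S(n-\ell)$ are measurable with respect to the independent increments $X_1,\dots,X_{n-\ell}$. Since $n-\ell\ge n/2$, Lemma~\ref{lem:loc.prob1} — in the form produced by its proof, i.e.\ via Lemma~\ref{lem:Tbound}, so that the summand $b^{p_1/2}$ is present when $c<0$ — bounds $\P\big(a+S(n-\ell)\in(u,u+1],\,T_{a,b}>n-\ell\big)$ by $C\big(1+|a|^{p_1}+b^{p_1/2}\ind\{c<0\}\big)n^{-(p_1+1)/2}$ uniformly in $u$. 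Consequently the probability in the Lemma is at most
\[
C\,\frac{1+|a|^{p_1}+b^{p_1/2}\ind\{c<0\}}{n^{(p_1+1)/2}}\;\P\big(S(j)<y+1+\phi_n(j)\ \text{for}\ 1\le j\le\ell-1\big),
\]
and it remains to control the last factor by the classical running-maximum estimate $\P\big(\max_{k\le J}S(k)<h\big)\le C(1+h)\,J^{-1/2}$.

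For $c<0$ I would take $\ell=\lfloor n/2\rfloor$: since $\phi_n\le0$ the last factor is at most $\P(\max_{j\le\ell-1}S(j)<y+1)\le C(1+y)\ell^{-1/2}$, of order $(1+y)n^{-1/2}$, and the product has the asserted order $n^{-p_1/2-1}$. For $c>0$ the window must be shortened, because the gain grows more slowly: if $1+y\le\sqrt n/2$ I would take $\ell=\lfloor(1+y)\sqrt n\rfloor$ (so $\ell\le n/2$) and use the monotonicity of $\phi_n$ together with $\phi_n(\ell)\le c\ell/\sqrt n\le c(1+y)$ to bound the last factor by $\P\big(\max_{j\le\ell-1}S(j)<(1+c)(1+y)\big)\le C(1+y)(\ell-1)^{-1/2}$, which is of order $(1+y)^{1/2}n^{-1/4}$; the product then has the asserted order $n^{-p_1/2-3/4}$. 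In the remaining range $\sqrt n/2<1+y\le\sqrt n/2+1$ the crude Lemma~\ref{lem:loc.prob1} already suffices, since there $(1+y)^{1/2}n^{-1/4}$ is bounded below by an absolute constant. Finitely many small values of $n$ are absorbed into $C$.

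The step I expect to be the main obstacle is the reduction in the first paragraph: seeing that time reversal converts ``survive the boundary and land within distance $y$ of it'' into ``stay below a barrier that is essentially flat at height $O(y)$ near the starting point and then grows like $c\sqrt{\cdot}$'', and then recognising that the length of the flat window one may exploit — and hence the extra power of $n$ one gains over Lemma~\ref{lem:loc.prob1} — is governed by the sign and size of $\phi_n$: of order $(1+y)\sqrt n$ when $c>0$ but of order $n$ when $c<0$. Once this is in place, the decoupling into an independent product (by replacing $W$ with $y+1$), the use of Lemma~\ref{lem:loc.prob1}, and the running-maximum bound are all routine.
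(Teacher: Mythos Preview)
Your argument is correct and is essentially the paper's proof: the paper applies the Markov property at time $n-m$, invokes the duality lemma for random walks on the last $m$ steps, bounds the first factor by Lemma~\ref{lem:loc.prob1}, and controls the second factor by the running-maximum bound (Lemma~3 of \cite{DW16}), choosing $m=\lfloor (y+1)\sqrt{n}+1\rfloor$ for $c>0$ and $m=\lfloor n/2\rfloor$ for $c\le0$. Your explicit time reversal $R(j)=S(n)-S(n-j)$ is precisely the duality lemma, your $\ell$ plays the role of the paper's $m$, your $\phi_n$ matches the paper's $c(\sqrt{b+n}-\sqrt{b+n-m+j})$, and your observation that Lemma~\ref{lem:loc.prob1} should carry the extra $b^{p_1/2}$ term when $c<0$ (via Lemma~\ref{lem:Tbound}) is exactly what the paper uses implicitly to obtain the numerator in \eqref{eq:lp2}.
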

\begin{proof}
Set 
$$
\Delta(y)=(c\sqrt{b+n}+y,c\sqrt{b+n}+y+1].
$$
Then, by the Markov property at time $n-m<n$,
\begin{align*}
&\P(a+S(n)\in\Delta(y),T_{a,b}>n)\\
&=\int_{c\sqrt{b+n-m}}^\infty \P(a+S(n-m)\in dz, T_{a,b}>n-m)
\P(z+S(m)\in \Delta(y),T_{z,b+n-m}>m).
\end{align*}
Using the duality lemma for random walks, we get 
\begin{align*}
&\P(z+S(m)\in \Delta(y),T_{z,b+n-m}>m)\\
&\le\P\left(z+S(m)\in \Delta(y),\min_{j\le m}(z+S(j))>\min_{j\le m}c\sqrt{b+n-m+j}\right)\\
&\le \P\left(y+1-S(m)\in[z-c\sqrt{b+n},z+1-c\sqrt{b+n}),\right.\\
&\hspace{3cm}\left.\min_{j\le m}(y+1-S(j))>-c\sqrt{b+n}+\min_{j\le m}c\sqrt{b+n-m+j}\right).
\end{align*}
It follows from Lemma~\ref{lem:loc.prob1} that, for $m\le n/2$,
\begin{align*}
&\P(a+S(n-m)\in(c\sqrt{b+n-m}+k,c\sqrt{b+n-m}+k+1],T_{a,b}>n-m)\\
&\hspace{1cm}\le C\frac{1+|a_1|^{p_1}}{n^{(p_1+1)/2}}
\end{align*}
uniformly in $k\ge0$. Therefore, 
\begin{align*}
&\P(a+S(n)\in\Delta(y),T_{a,b}>n)\\
&\le C\frac{1+|a_1|^{p_1}}{n^{(p_1+1)/2}}
\sum_{k=0}^\infty\P\Big(y+1-S(m)\in \Delta'(k),\\
&\hspace{3cm}\min_{j\le m}(y+1-S(j))>-c\sqrt{b+n}+\min_{j\le m}c\sqrt{b+n-m+j}\Big)\\
&\le C\frac{1+|a_1|^{p_1}}{n^{(p_1+1)/2}}
\P\left(\min_{j\le m}(y+1-S(j))>-c\sqrt{b+n}+\min_{j\le m}c\sqrt{b+n-m+j}\right),
\end{align*}
where 
$$
\Delta'(k)=[c\sqrt{b+n-m}-c\sqrt{b+n}+k,c\sqrt{b+n-m}-c\sqrt{b+n}+k+1].
$$
We next notice that 
$$
c\sqrt{b+n}-\min_{j\le m}c\sqrt{b+n-m+j}
=c^+(\sqrt{b+n}-\sqrt{b+n-m}).
$$

Therefore,
\begin{align}
\nonumber
\label{eq:lp3}
&\P(a+S(n)\in\Delta(y),T_{a,b}>n)\\
&\le C\frac{1+|a_1|^{p_1}}{n^{(p_1+1)/2}}
\P\left(\min_{j\le m}(y+1-S(j))>-c^+(\sqrt{b+n}-\sqrt{b+n-m})\right).
\end{align}
If $c>0$ ($c^+=c$) then we take $m=\lfloor (y+1)\sqrt{n}+1\rfloor$.
In this case 
$$
\sqrt{b+n}-\sqrt{b+n-m}=\frac{m}{\sqrt{b+n}+\sqrt{b+n-m}}
\ge\frac{(y+1)\sqrt{n}}{\sqrt{n}}=y+1.
$$
Then, using Lemma 3 from \cite{DW16}, we get 
\begin{align}
\label{eq:lp4}
\nonumber
&\P(\min_{j\le m}(y+1-S(j))>-c^+(\sqrt{b+n}-\sqrt{b+n-m})\\
&\le \P(\min_{j\le m}(-S(j))>-(1+c)(1+y))
\le C\frac{1+y}{\sqrt{m}}\le C\frac{(1+y)^{1/2}}{n^{1/4}}.
\end{align}
Combining \eqref{eq:lp3} and \eqref{eq:lp4}, we get \eqref{eq:lp1}.

If $c^+=0$, i.e. $c\le0$, then we take $m=\lfloor n/2\rfloor$. Using once again Lemma 3 from \cite{DW16}, we have 
\begin{align}
\label{eq:lp5}
\nonumber
&\P(\min_{j\le m}(y+1-S(j))>-c^+(\sqrt{b+n}-\sqrt{b+n-m})\\
&\le \P(\min_{j\le m}(-S(j))>-(1+y))
\le C\frac{1+y}{\sqrt{m}}\le C\frac{(1+y)}{n^{1/2}}.
\end{align}
Combining \eqref{eq:lp3} and \eqref{eq:lp5}, we get \eqref{eq:lp2}.
\end{proof}
\section{Construction of the harmonic function: proof of Theorem~\ref{thm:space-time-h}}
Now let 
$$
f(x,y):=\EW{\overline{V}_{p(c)}(x+X,y+1)}-\overline{V}_{p(c)}(x,y).
$$
Then the sequence
\begin{align*}
M_n:=\overline{V}_{p(c)}(a+S(n),b+n)-\sum\limits_{k=0}^{n-1}f(a+S(k),b+k)    
\end{align*}
is a martingale. 
Therefore, using optional stopping, we obtain
\begin{align*}
	\overline{V}_{p(c)}(a,b)&=M_0=\EW{M_{T_{a,b}\wedge n}}=\EW{M_n{;}\,{T_{a,b}> n}}+\EW{M_{T_{a,b}}{;}\,{T_{a,b}\leq  n}}\\
	&=\EW{\overline{V}_{p(c)}(a+S(n),b+n){;}\,{T_{a,b}> n}}+\EW{\sum\limits_{k=0}^{n-1}f_k{;}\,{T_{a,b}> n}}\\
	&\qquad+\EW{\overline{V}_{p(c)}(a+S(T_{a,b}),b+T_{a,b}){;}\,{T_{a,b}\leq n}}-\EW{\sum\limits_{k=0}^{T_{a,b}-1}f_k{;}\,{T_{a,b}\leq n}},
\end{align*}
where  $f_k:=f(a+S(k),b+k)$.
The third expectation vanishes since $V_{p(c)}(a+S(T_{a,b}),b+T_{a,b})=0$.
Rearranging this, we obtain
\begin{align}
\label{eq:main_decomp}
\nonumber
&\E\left[{\overline{V}_{p(c)}(a+S(n),b+n){;}\,{T_{a,b}> n}}\right]\\
\nonumber
&\hspace{1cm}=\overline{V}_{p(c)}(a,b)-\EW{\sum\limits_{k=0}^{n-1}f_k{;}\,{T_{a,b}> n}}-\EW{\sum\limits_{k=0}^{T_{a,b}-1}f_k{;}\,{T_{a,b}\leq n}}\\
	&\hspace{1cm}=\overline{V}_{p(c)}(a,b)-\EW{\sum\limits_{k=0}^{T_{a,b}-1}f_k}+\EW{\sum\limits_{k=n}^{T_{a,b}-1}f_k{;}\,{T_{a,b}> n}}.
\end{align}
A priori, existence of the limit of the expectation on the left hand side is not clear:

\begin{proposition}\label{lem:existence}
	The limit  $\lim_{n\to\infty}\E\left[{V_{p(c)}(a+S(n),b+n){;}\,{T_{a,b}> n}}\right]$ is finite.
\end{proposition}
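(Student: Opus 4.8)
The plan is to read off everything from the decomposition~\eqref{eq:main_decomp}. Set $D:=\E\big[\sum_{k=0}^{T_{a,b}-1}|f_k|\big]=\sum_{k\ge 0}\E[|f_k|;T_{a,b}>k]$. I claim the whole proposition reduces to showing $D<\infty$. Indeed, if $D<\infty$ then $\sum_{k}f_k\ind\{T_{a,b}>k\}$ converges absolutely a.s.\ and in $L^1$, so $\E\big[\sum_{k=0}^{T_{a,b}-1}f_k\big]$ is a finite constant; moreover, since $\P(T_{a,b}>n)\to0$ by Lemma~\ref{lem:Tbound}, we have $\ind\{T_{a,b}>n\}\sum_{k=n}^{T_{a,b}-1}f_k\to0$ a.s., and this quantity is dominated by $\sum_{k\ge0}|f_k|\ind\{T_{a,b}>k\}\in L^1$, so by dominated convergence the remainder term in~\eqref{eq:main_decomp} tends to $0$. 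Hence the left-hand side of~\eqref{eq:main_decomp} converges to the finite constant $\overline V_{p(c)}(a,b)-\E\big[\sum_{k=0}^{T_{a,b}-1}f_k\big]$. Finally, on $\{T_{a,b}>n\}$ one has $a+S(n)>c\sqrt{b+n}=c(p(c))\sqrt{b+n}$, so $\overline V_{p(c)}$ agrees with $V_{p(c)}$ there (and also $\overline V_{p(c)}(a,b)=V_{p(c)}(a,b)$ since $a>c\sqrt b$), which is exactly the statement.

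So the task is to bound $D$. On $\{T_{a,b}>k\}$ we have $a+S(k)>c\sqrt{b+k}$, so the drift bounds of Section~3 apply with $p=p(c)$ and $c(p)=c$, evaluated at $x=a+S(k)$, $t=b+k$: Lemma~\ref{lem:remainder} when $p(c)>1$ and Lemma~\ref{lem:remainder01} when $p(c)<1$. Writing $Y_k:=a+S(k)-c\sqrt{b+k}>0$ for the distance to the boundary, this gives, for instance in the case $p(c)>2$,
\[
|f_k|\le C\Big(\frac{(1+a+S(k))^{p(c)-1}}{(1+Y_k)^{p(c)-1}}+(1+a+S(k))^{p(c)-3}\Big),
\]
and an analogous bound with a $(1+Y_k)^{-(1+\delta)}$ factor when $p(c)\in(1,2]$, resp.\ a decaying power $(1+Y_k)^{p(c)-2-\delta}$ when $p(c)\in(0,1)$. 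To estimate $\sum_k\E[|f_k|;T_{a,b}>k]$ I would slice the event $\{T_{a,b}>k\}$ according to $Y_k\in[j,j+1)$, $j\ge0$. In the bulk $j\le\sqrt k/2$ the probability $\P(a+S(k)\in(c\sqrt{b+k}+j,c\sqrt{b+k}+j+1],\,T_{a,b}>k)$ is controlled by Lemma~\ref{lem:loc.prob2}, which supplies both the extra factor $(1+j)^{1/2}$ (for $c>0$), resp.\ $(1+j)$ (for $c<0$), and the improved time exponent $k^{-(p_1/2+3/4)}$, resp.\ $k^{-(p_1/2+1)}$, valid for any $p_1<p(c)$. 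In the far region $j>\sqrt k/2$ one instead uses the plain local bound of Lemma~\ref{lem:loc.prob1} together with the tail estimate $\P(|X|>t)\le Ct^{-p(c)}$ (equivalently, the moment bound of Lemma~\ref{lem:BDG} for $\max_{k\le T_{a,b}}|S(k)|$) to gain decay in both $j$ and $k$. Performing the $j$-summation and then the $k$-summation, with $p_1$ taken close enough to $p(c)$, yields $D<\infty$.

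The main obstacle is precisely getting this double sum to converge. Since $|f_k|$ can be of order $(1+a+S(k))^{p(c)-1}\asymp k^{(p(c)-1)/2}$ while $\P(T_{a,b}>k)\asymp k^{-p(c)/2}$, the crude estimate $\sum_k\E[|f_k|;T_{a,b}>k]\lesssim\sum_k k^{(p(c)-1)/2-p(c)/2}=\sum_k k^{-1/2}$ diverges, so both sources of additional decay must be exploited at once: the boundary factor $(1+Y_k)^{-(p(c)-1)}$ (resp.\ $(1+Y_k)^{-(1+\delta)}$) in the drift bound, balanced against the $(1+j)^{1/2}$-gain from Lemma~\ref{lem:loc.prob2}, makes the $j$-sum converge, while the improved time exponent of Lemma~\ref{lem:loc.prob2} together with the freedom to let $p_1\uparrow p(c)$ — which is exactly where the assumption $\E|X|^{p(c)}<\infty$ is used — makes the $k$-sum converge. (The borderline case $p(c)=1$, i.e.\ $c=0$, is that of~\cite{DSW19} and can be handled by the same slicing using $|f_k|=O((1+a+S(k))^{-1-\delta})$, which follows from $\E|X|^{2+\delta}<\infty$.)
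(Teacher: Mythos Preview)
Your reduction to $D<\infty$ via \eqref{eq:main_decomp} and the slicing of $\{T_{a,b}>k\}$ by the distance $Y_k$ to the boundary, using Lemma~\ref{lem:loc.prob2} in the near-boundary strip and the moment bounds of Lemma~\ref{lem:BDG} in the far region, is exactly the route the paper takes. The only cosmetic differences are that the paper treats the additive term $(1+a+S(k))^{p(c)-3}$ (for $p(c)\ge3$) separately via Cauchy--Schwarz and Lemma~\ref{lem:BDG} rather than slicing it, and in the far region it simply observes that the ratio $(1+a+S(k))^{p(c)-1}/(1+Y_k)^{p(c)-1}$ is bounded and sums $\P(T_{a,b}>k)$, rather than invoking Lemma~\ref{lem:loc.prob1}.
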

\begin{proof}We first notice that 
$$
\E\left[{V_{p(c)}(a+S(n),b+n){;}\,{T_{a,b}> n}}\right]
=\E\left[{\overline{V}_{p(c)}(a+S(n),b+n){;}\,{T_{a,b}> n}}\right].
$$
It is then immediate from the representation \eqref{eq:main_decomp} that the claim will be proven if we show that the random variable 
$\sum\limits_{k=0}^{T_{a,b}-1}|f_k|$ has finite expectation. Indeed, this will imply that the second term in \eqref{eq:main_decomp} is well-defined and that the third term will vanish as $n\to\infty$.

	For $p(c)\geq 3$, we notice that by Lemma \ref{lem:remainder} 
	$$
	|f(x,y)|\leq C(1+x)^{p(c)-3}+C\frac{(1+x)^{p(c)-1}}{(1+x-c\sqrt{y})^{p(c)-1}}
	$$
	for some suitable positive constant $C$.
Therefore,
\begin{align}
\label{eq:prop.1}
\nonumber
	&\sum\limits_{k=0}^{T_{a,b}-1}|f_k|
	\leq C\sum\limits_{k=0}^{T_{a,b}-1}(a+1+S(k))^{p(c)-3}\\
	&\hspace{3cm}+C\sum\limits_{k=0}^{T_{a,b}-1}
	\frac{(1+a+S(k))^{p(c)-1}}{(1+a+S(k)-c\sqrt{k+b})^{p(c)-1}}.
\end{align}
For the first sum we have
\begin{align*}
\sum\limits_{k=0}^{T_{a,b}-1}(a+1+S(k))^{p(c)-3}
	&\leq C_1\left[(a+1)^{p(c)-3}T_{a,b}+\sum\limits_{k=0}^{T_{a,b}-1}(S(k))^{p(c)-3}\right]\\
	&\leq C_1\left[(a+1)^{p(c)-3}+(\max\limits_{k\leq T_{a,b}}|S(k)|)^{p(c)-3}\right]T_{a,b}
\end{align*}
for some positive constant $C_1$.
We define $\bar S_T:= \max\limits_{k\leq T_{a,b}}|S(k)|$.
Taking the expectation and applying the Cauchy-Schwarz inequality, we get  
\begin{align*}
	 &\EW{\sum\limits_{k=0}^{T_{a,b}-1}(a+1+S(k))^{p(c)-3}}\\
	 &\leq C_1\left[(a+1)^{p(c)-3}\E T_{a,b}+\EW{(\bar S_{T})^{p(c)-3}T_{a,b}}\right]\\
 	&\leq C_1\left((a+1)^{p(c)-3}\E T_{a,b}+\EW{(\bar S_{T})^{(p(c)-3)r}}^{1/r}\EW{T_{a,b}^s}^{1/s}\right),
\end{align*}
where we choose $s=p_1/2<p(c)/2$ and $r=p_1/(p_1-2)$ which satisfies $(p(c)-3)r<p_1$ provided that $p(c)-1<p_1<p(c)$. Applying now Lemma \ref{lem:BDG},
we conclude that
\begin{align}
\label{eq:prop.2}
	 &\EW{\sum\limits_{k=0}^{T_{a,b}-1}(a+1+S(k))^{p(c)-3}}\le
	 C(1+a)^{p(c)-1}.
\end{align}

We now turn to expectation of the second sum in \eqref{eq:prop.1}. 
We start by the following splitting: 
\begin{align*}
&\EW{\sum\limits_{k=0}^{T_{a,b}-1}\frac{(1+a+S(k))^{p(c)-1}}{(1+a+S(k)-c\sqrt{k+b})^{p-1}}}\\
&=\sum\limits_{k=0}^\infty\EW{\frac{(1+a+S(k))^{p(c)-1}}{(1+a+S(k)-c\sqrt{k+b})^{p(c)-1}}{;}\,T_{a,b}>k}\\
&=\sum\limits_{k=0}^\infty\EW{\frac{(1+a+S(k))^{p(c)-1}}{(1+a+S(k)-c\sqrt{k+b})^{p(c)-1}}{;}\,a+S(k)>\left(c+\frac{1}{2}\right)\sqrt{k+b},T_{a,b}>k}\\
	&\hspace{1mm}+ \sum\limits_{k=0}^\infty\EW{\frac{(1+a+S(k))^{p(c)-1}}{(1+a+S(k)-\sqrt{k+b})^{p(c)-1}}{;}\,a+S(k)\leq\left(c+\frac{1}{2}\right)\sqrt{k+b},T_{a,b}>k}.
\end{align*}
We notice that if $a+S(k)>(c+1/2)\sqrt{k+b}$, then 
\[\frac{(1+a+S(k))^{p(c)-1}}{(1+a+S(k)-c\sqrt{k+b})^{p(c)-1}}\leq C.\]
Therefore, applying Lemma~\ref{lem:BDG}, we obtain
\begin{align}
\label{eq:prop.3}
\nonumber
&\sum\limits_{k=0}^\infty\EW{\frac{(1+a+S(k))^{p(c)-1}}{(1+a+S(k)-c\sqrt{k+b})^{p(c)-1}}{;}\,a+S(k)>\left(c+\frac{1}{2}\right)\sqrt{k+b},T_{a,b}>k}\\
&\le C\sum_{k=0}^\infty \P(T_{a,b}>k)=C\E T_{a,b}\le C(1+a)^2.
\end{align}

We now split 
\begin{align*}
\E&\left[{\frac{(1+a+S(k))^{p(c)-1}}{(1+a+S(k)-c\sqrt{k+b})^{p(c)-1}}{;}\,a+S(k)\leq(c+1)\sqrt{k+b},T_{a,b}>k}\right]\notag\\
&\leq C\sqrt{k+b}^{p(c)-1}\sum\limits_{j=1}^{\frac{1}{2}\sqrt{k+b}}
j^{-p(c)+1}\P\left(a+S(k)\in\Delta(j),T_{a,b}>k\right),
\end{align*}
where
$$
\Delta(j)=\left(c\sqrt{k+b}+j-1,c\sqrt{k+b}+j\right].
$$
Using now Lemma~\ref{lem:loc.prob2}, we get 
\begin{align}
\label{eq:prop.4}
\nonumber
&\sum\limits_{j=1}^{\frac{1}{2}\sqrt{k+b}}
j^{-p(c)+1}\P\left(a+S(k)\in\Delta(j),T_{a,b}>k\right)\\
&\hspace{2cm}\le \frac{C(1+a)^{p_1}}{k^{p_1/2+3/4}} \sum\limits_{j=1}^{\frac{1}{2}\sqrt{k+b}}
j^{-p(c)+1}(1+j)^{1/2}.
\end{align}
Since $p(c)\ge3$ the sum on the right hand side of \eqref{eq:prop.4} is bounded
and, consequently,
\begin{align}
\label{eq:prop.5a}
\nonumber
&\sum\limits_{k=0}^\infty\EW{\frac{(1+a+S(k))^{p(c)-1}}{(1+a+S(k)-c\sqrt{k+b})^{p(c)-1}}{;}\,a+S(k)\leq(c+\frac{1}{2})\sqrt{k+b},T_{a,b}>k}\\
&\le C(1+a)^{p(c)-1}+ C(1+a)^{p_1}\sum_{k=1}^\infty k^{(p(c)-1)/2-p_1/2-3/4}.
\end{align}
Therefore, for every $p_1\in(p(c)-1/2,p(c))$ we have  
\begin{align}
\label{eq:prop.5}
\nonumber
&\sum\limits_{k=0}^\infty\EW{\frac{(1+a+S(k))^{p(c)-1}}{(1+a+S(k)-c\sqrt{k+b})^{p(c)-1}}{;}\,a+S(k)\leq(c+\frac{1}{2})\sqrt{k+b},T_{a,b}>k}\\
&\le C(1+a)^{p_1}.
\end{align}
Combining \eqref{eq:prop.4} and \eqref{eq:prop.5}, we finally get 
\begin{equation}
\label{eq:prop.6}
\EW{\sum\limits_{k=0}^{T_{a,b}-1}\frac{(1+a+S(k))^{p(c)-1}}{(1+a+S(k)-c\sqrt{k+b})^{p(c)-1}}}
\le C(1+a)^{p(c)_1}.
\end{equation}
This finishes the proof in the case $p(c)\ge 3$.

In the case $p(c)\in(2,3)$ we have from Lemma~\ref{lem:remainder}
$$
|f_k|\le C\frac{(1+a+S(k))^{p(c)-1}}{(1+a+S(k)-c\sqrt{k+b})^{p(c)-1}}.
$$
Thus, we need to prove an analogue of \eqref{eq:prop.6} for $p(c)\in(2,3)$.
Clearly, \eqref{eq:prop.3} holds for all $p(c)>2$. Furthermore,
it is easy to see that if $p(c)>2.5$ then the sum in \eqref{eq:prop.4} is still bounded and, consequently, \eqref{eq:prop.5} remains valid for $p(c)>2.5$. Assume now that $p(c)\le 2.5$. In this case 
$$
\sum\limits_{j=1}^{\frac{1}{2}\sqrt{k+b}}
j^{-p(c)+1}(1+j)^{1/2}
\le Ck^{5/4-p(c)/2}
$$
and, consequently, instead of \eqref{eq:prop.5a} we have 
\begin{align*}
&\sum\limits_{k=0}^\infty\EW{\frac{(1+a+S(k))^{p(c)-1}}{(1+a+S(k)-c\sqrt{k+b})^{p-1}}{;}\,a+S(k)\leq(c+\frac{1}{2})\sqrt{k+b},T_{a,b}>k}\\
&\le C(1+a)^{p(c)-1}+ C(1+a)^{p_1}\sum_{k=1}^\infty k^{(p(c)-1)/2-p_1/2-3/4+5/4-p(c)/2}\\
&\le C(1+a)^{p(c)-1}+ C(1+a)^{p_1}\sum_{k=1}^\infty k^{-p_1/2}.
\end{align*}
Now we can infer that \eqref{eq:prop.6} remains valid in the case $p(c)\in(2,3)$ with $p_1\in(2,p(c))$.

The case $p(c)\in(1,2]$ is rather similar to $p(c)\in(2,3)$ and we omit its proof.

We now consider the remaining case $p(c)<1$. According to Lemma~\ref{lem:remainder01},
\begin{align*}
|f_k|\le C(1+a+S(k)-c\sqrt{b+k})^{p(c)-2-\delta}. 
\end{align*}
Therefore,
\begin{align*}
\EW{\sum\limits_{k=0}^{T_{a,b}-1}|f_k|}
&\le\EW{C\sum\limits_{k=0}^{T_{a,b}-1}
	(1+a+S(k)-c\sqrt{b+k})^{p(c)-2-\delta}}\\
&=C\sum_{k=0}^\infty\EW{(1+a+S(k)-c\sqrt{b+k})^{p(c)-2-\delta};T_{a,b}>k}.	
\end{align*}
Here we again split every expectation into two parts:
\begin{align*}
&\EW{(1+a+S(k)-c\sqrt{b+k})^{p(c)-2-\delta};T_{a,b}>k}\\
&=\EW{(1+a+S(k)-c\sqrt{b+k})^{p(c)-2-\delta};a+S(k)\le (c+1/2)\sqrt{b+k}, T_{a,b}>k}\\
&+\EW{(1+a+S(k)-c\sqrt{b+k})^{p(c)-2-\delta};a+S(k)> (c+1/2)\sqrt{b+k},T_{a,b}>k}.
\end{align*}
The second expectation is bounded by $k^{p(c)/2-1-\delta/2}\P(T_{a,b}>k)$.
This implies that 
\begin{align}
\label{eq:prop.7}
\nonumber
&\sum_{k=0}^\infty \EW{(1+a+S(k)-c\sqrt{b+k})^{p(c)-2-\delta};a+S(k)> (c+1/2)\sqrt{b+k},T_{a,b}>k}\\
\nonumber
&\le C \sum_{k=0}^\infty k^{p(c)/2-1-\delta/2}\P(T_{a,b}>k)\\
&\le C_1\E T_{a,b}^{p(c)/2-\delta/2}
\le C_2(1+|a|^{p(c)-\delta}+b^{p(c)/2-\delta/2}).
\end{align}
In the last step we have used Lemma~\ref{lem:BDG}.
For every $k$ we also have 
\begin{align*}
&\EW{(1+a+S(k)-c\sqrt{b+k})^{p(c)-2-\delta};a+S(k)\le (c+1/2)\sqrt{b+k}, T_{a,b}>k}\\
&\le \sum_{j=1}^{\sqrt{b+k}/2}j^{p(c)-2-\delta}
\P(a+S(k)\in\Delta(j),T_{a,b}>k).
\end{align*}
Applying now \eqref{eq:lp2}, we have 
\begin{align*}
&\EW{(1+a+S(k)-c\sqrt{b+k})^{p(c)-2-\delta};a+S(k)\le (c+1/2)\sqrt{b+k}, T_{a,b}>k}\\
&\le C\frac{(1+|a|^{p_1}+b^{p_1/2})}{k^{p_1/2+1}}
\sum_{j=1}^{\sqrt{b+k}/2}j^{p(c)-1-\delta}
\le C\frac{(1+|a|^{p_1}+b^{p_1/2})}{k^{p_1/2+1}}k^{p(c)/2-\delta/2}.
\end{align*}
The sequence on the right hand side is summable for any $p_1\in(p(c)-\delta,p(c))$. Thus,
\begin{align*}
&\sum_{k=0}^\infty \EW{(1+a+S(k)-c\sqrt{b+k})^{p-2-\delta};a+S(k)\le (c+1/2)\sqrt{b+k},T_{a,b}>k}\\
&\le C (1+|a|^{p_1}+b^{p_1/2}).
\end{align*}
Combining this with \eqref{eq:prop.7} completes the proof of the proposition.
\end{proof}
As the expectation exists, we can use dominated convergence to take the limit $n\to\infty$. Thus, we obtain
\[\lim\limits_{n\to\infty}\E\left[{V_{p(c)}(a+S(n),b+n){;}\,{T_{a,b}> n}}\right]=V_{p(c)}(a,b)-\EW{\sum\limits_{k=0}^{T_{a,b}-1}f_k}=:W(a,b)\]

\begin{lemma}\label{lem:harmonicity}
	Under the assumptions of Theorem \ref{thm:space-time-h},
	\[\EW{W(a+X,b+1){;}\,T_{a,b}>1}=W(a,b).\]
\end{lemma}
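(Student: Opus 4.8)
The plan is to obtain the one-step harmonicity of $W$ directly from its representation as a limit, by conditioning on the first increment of the walk and then passing the limit under the expectation sign.

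First I would note that $\E[V_{p(c)}(a+S(n),b+n);T_{a,b}>n]=\E[\overline V_{p(c)}(a+S(n),b+n);T_{a,b}>n]$, since on $\{T_{a,b}>n\}$ one has $a+S(n)>c\sqrt{b+n}$. Writing $X:=X_1$ and $S'(j):=S(j+1)-S(1)$, an independent copy of the walk, and using the identity $\ind\{T_{a,b}>n\}=\ind\{T_{a,b}>1\}\cdot\ind\{T'_{a+X,b+1}>n-1\}$ (where $T'$ denotes the stopping time built from $S'$), the Markov property at time $1$ yields
\[
\E\big[\overline V_{p(c)}(a+S(n),b+n);T_{a,b}>n\big]=\E\big[h_n(a+X);T_{a,b}>1\big],
\]
where $h_n(y):=\E\big[\overline V_{p(c)}(y+S(n-1),b+n);T_{y,b+1}>n-1\big]$ for $y>c\sqrt{b+1}$. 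For each such fixed $y$, Proposition~\ref{lem:existence} applied with parameters $(y,b+1)$ gives $h_n(y)\to W(y,b+1)$ as $n\to\infty$, while by definition of $W$ the left-hand side above tends to $W(a,b)$. Hence the lemma reduces to interchanging limit and expectation on the right-hand side.

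To justify this by dominated convergence, I would establish a uniform-in-$n$ bound on $h_n$. Using the optional-stopping identity behind \eqref{eq:main_decomp} with starting point $(y,b+1)$ and $n-1$ steps, one obtains
\[
0\le h_n(y)\le V_{p(c)}(y,b+1)+2\,\E_{y,b+1}\Big[\sum_{k=0}^{T_{y,b+1}-1}|f_k|\Big],
\]
where in the sum $f_k$ is understood with the starting point $(y,b+1)$ in place of $(a,b)$. The first term is at most $C_b(1+|y|^{p(c)})$ for $y>c\sqrt{b+1}$ by \eqref{p8} when $p(c)\ge1$ and by Lemma~\ref{lem:V-boundary-01} when $p(c)<1$; the second term is at most $C_b(1+|y|^{p_1})$ for a suitable $p_1<p(c)$, by the very moment estimates carried out inside the proof of Proposition~\ref{lem:existence}. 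Substituting $y=a+X$ and keeping in mind that $a,b$ are fixed, this gives $h_n(a+X)\,\ind\{T_{a,b}>1\}\le C(1+|X|^{p(c)})$, which is integrable by the hypothesis $\E|X|^{p(c)}<\infty$. Dominated convergence then yields $\E[h_n(a+X);T_{a,b}>1]\to\E[W(a+X,b+1);T_{a,b}>1]$, and comparing this with $\E[V_{p(c)}(a+S(n),b+n);T_{a,b}>n]\to W(a,b)$ completes the proof.

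The only point requiring care is the uniform domination: one must re-run the estimates from the proof of Proposition~\ref{lem:existence} with the generic starting point $(y,b+1)$ and across all ranges of $p(c)$ (above and below $1$, as well as the boundary cases $p(c)\in\{1,2,3\}$). This is routine, however, since the genuinely delicate moment bounds for $\sum_k|f_k|$ have already been established there and are only being re-used.
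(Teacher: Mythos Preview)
Your proposal is correct and follows essentially the same route as the paper: condition on the first step via the Markov property, identify the inner expectation as the pre-limit of $W$ with shifted parameters, and pass to the limit by dominated convergence using the $C(1+|y|^{p(c)})$ envelope extracted from the proof of Proposition~\ref{lem:existence}. Your write-up is in fact slightly more explicit about how the domination is obtained (via \eqref{eq:main_decomp} and the separate handling of $V_{p(c)}(y,b+1)$), whereas the paper simply cites the bound from that proof.
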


\begin{proof}
	Let $S^*(k):=S(k+1)-X_1$. Clearly, $S(k)$ and $X_1$ are independent, and $S(k)$ and $S^*(k)$ are identically distributed. Let $T_{a,b}^*:=\inf\{k\geq 0:S^*(k)+a<c\sqrt{k+b}\}$.
	We notice first that  
\begin{align*}
&\{T_{a,b}>n+1\}\\
        &=\{a+S(k)>c\sqrt{k+b}\text{ for all }k\le n+1\}\\
		&=\{a+S(1)>c\sqrt{1+b}\}\cap\{X_1+a+(S(k)-X_1)>c\sqrt{k+b}\text{ for all }2\le k\le n+1\}\\
		&=\{T_{a,b>1}\}\cap\{X_1+a+S^*(k)>c\sqrt{k+1+b}\text{ for all }k\le n\}\\
		&=\{a+S(1)>c\sqrt{1+b}\}\cap\{T^*_{a+X_1,b+1}>n\}.
\end{align*}
	Therefore, using law of total expectation to condition on the first step of the random walk,
	\begin{align*}
		\E&\left[V_{p(c)}(a+S(n+1),b+n+1){;}\,T_{a,b}>n+1\right]\\
		&=\E\left[\E\left[V_{p(c)}(a+S(n+1),b+n+1){;}\,T_{a,b}>n+1\left|X_1\right.\right]\right]\\
		&=\E\left[\E\left[V_{p(c)}(a+X_1+S^*(n),b+1+n){;}\,T^*_{a+X_1,b+1}>n\left|X_1\right.\right]{;}\,T_{a,b}>1\right]\\
		&=\int\E\left[V_{p(c)}(a+x+S(n),b+1+n){;}\,T_{a+x,b+1}>n\right]\P(X_1\in dx, T_{a,b}>1)\\
		&=\int\E\left[V_{p(c)}(y+S(n),b+1+n){;}\,T_{y,b+1}>n\right]\P(a+X_1\in dy, T_{a,b}>1)
	\end{align*}
	We have seen in the proof of Lemma \ref{lem:existence} that 
    $\E\left[V_{p(c)}(y+S(n),b+1+n){;}\,T_{y,b+1}>n\right]$ is bounded by 
    $C(1+|y|^p)$. Then, due to the finiteness of $\E|X|^p$, we may apply dominated convergence theorem:
	\begin{align*}
		W(a,b)&=\lim\limits_{n\to\infty}\E\left[V_{p(c)}(a+S(n+1),b+n+1){;}\,T_{a,b}>n+1\right]\\
		&=\lim\limits_{n\to\infty}\int\E\left[V_{p(c)}(y+S(n),b+1+n){;}\,T_{y,b+1}>n\right]\P(a+X_1\in dy, T_{a,b}>1)\\
		&=\int W(y,b+1)\P(a+X_1\in dy, T_{a,b}>1)\\
		&=\EW{W(a+X_1,b+1){;}\,T_{a,b}>1}.\qedhere
	\end{align*}
\end{proof}

\begin{lemma}\label{lem:equiv.W.V}
    The function $W(a,b)$ is monotone increasing in $a$. Furthermore,
	if $b\to\infty$ with $a>(c(p)+\gamma)\sqrt{b}$ for some $\gamma>0$, 
    then
\begin{equation}\label{eq:equiv.W.V.}
    W(a,b)\sim V_{p(c)}(a,b).
    \end{equation}
    Moreover, we can pick $\gamma_b\downarrow 0$ sufficiently slow 
    so that~\eqref{eq:equiv.W.V.} still  holds for $a>(c(p)+\gamma_b)\sqrt{b}$ . 
\end{lemma}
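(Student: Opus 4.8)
The plan is to prove the two assertions in turn, deriving the asymptotic equivalence in a form uniform over $a>(c+\gamma)\sqrt b$, so that the statement about a slowly vanishing $\gamma_b$ comes essentially for free. Throughout I use that $c(p(c))=c$, which is exactly what \eqref{p12} says.

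\emph{Monotonicity.} For $a_1<a_2$ I would run both killed walks with the same increments. Since $a_1+S(k)>c\sqrt{k+b}$ forces $a_2+S(k)>c\sqrt{k+b}$, we get $\{T_{a_1,b}>n\}\subseteq\{T_{a_2,b}>n\}$, and on $\{T_{a_1,b}>n\}$ both $a_1+S(n)$ and $a_2+S(n)$ lie strictly above $c\sqrt{b+n}$. Hence, using monotonicity of $V_{p(c)}$ in its first argument on $(c\sqrt t,\infty)$ from \eqref{p11} together with $\overline V_{p(c)}\ge0$ (a consequence of \eqref{p12}),
\begin{align*}
\E\left[\overline V_{p(c)}(a_1+S(n),b+n);T_{a_1,b}>n\right]
&\le \E\left[\overline V_{p(c)}(a_2+S(n),b+n);T_{a_1,b}>n\right]\\
&\le \E\left[\overline V_{p(c)}(a_2+S(n),b+n);T_{a_2,b}>n\right],
\end{align*}
and letting $n\to\infty$ in the definition of $W$ yields $W(a_1,b)\le W(a_2,b)$.

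\emph{Uniform asymptotics.} I would start from the identity $W(a,b)=V_{p(c)}(a,b)-\E\big[\sum_{k=0}^{T_{a,b}-1}f_k\big]$ recorded right after Proposition~\ref{lem:existence}. Its proof in fact supplies, for a suitable $p_1\in(0,p(c))$ taken close to $p(c)$ (inside the admissible interval dictated by the case analysis there), the quantitative bound
$$
\E\left[\sum_{k=0}^{T_{a,b}-1}|f_k|\right]\le C\left(1+|a|^{p_1}+b^{p_1/2}\ind\{p(c)<1\}\right).
$$
For $a>(c+\gamma)\sqrt b$ one has $a-c\sqrt b>\gamma\sqrt b$ and, after a short case check on the signs of $c$ and of $c+\gamma$, also $a-c\sqrt b\ge c'(\gamma)\max(|a|,\sqrt b)$; hence \eqref{p8} gives $V_{p(c)}(a,b)\ge c''(\gamma)\max(|a|,\sqrt b)^{p(c)}$. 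Since $1+|a|^{p_1}+b^{p_1/2}\le 3\max(|a|,\sqrt b)^{p_1}$ for $b\ge1$ and $p_1-p(c)<0$, combining the last two estimates gives
$$
\sup_{a>(c+\gamma)\sqrt b}\left|\frac{W(a,b)}{V_{p(c)}(a,b)}-1\right|
\le \frac{3C}{c''(\gamma)}\,b^{(p_1-p(c))/2}\longrightarrow0\quad\text{as }b\to\infty,
$$
which in particular proves \eqref{eq:equiv.W.V.}.

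\emph{The sequence $\gamma_b$ and the main obstacle.} Applying the displayed uniform estimate with $\gamma=1/m$, for each $m$ I pick $b_m$ (strictly increasing in $m$) such that $\sup_{a>(c+1/m)\sqrt b}|W(a,b)/V_{p(c)}(a,b)-1|\le1/m$ for all $b\ge b_m$; setting $\gamma_b:=1/m$ for $b\in[b_m,b_{m+1})$ produces a sequence $\gamma_b\downarrow0$ with the required property. The only genuine work is the quantitative bound on $\E\big[\sum_{k<T_{a,b}}|f_k|\big]$: it is implicit in the proof of Proposition~\ref{lem:existence}, but it is spread over the regimes $p(c)\ge3$, $p(c)\in(2,3)$, $p(c)\in(1,2]$ and $p(c)<1$, and one must check that in each of them every contribution is $O\big(1+|a|^{p_1}+b^{p_1/2}\ind\{p(c)<1\}\big)$. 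The delicate regime is $c<0$ (so $p(c)<1$), where $a$ need not tend to infinity as $b\to\infty$ and the term $b^{p_1/2}$ dominates, so that one genuinely relies on the lower bound $V_{p(c)}(a,b)\gtrsim b^{p(c)/2}$ rather than on one expressed through $a$ alone.
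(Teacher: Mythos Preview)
Your proof is correct and follows essentially the same route as the paper: both argue monotonicity from the pathwise inclusion $\{T_{a_1,b}>n\}\subseteq\{T_{a_2,b}>n\}$ together with \eqref{p11}, and both derive \eqref{eq:equiv.W.V.} from the representation $W(a,b)=V_{p(c)}(a,b)-\E\big[\sum_{k<T_{a,b}}f_k\big]$ by invoking the quantitative bound $\E\big[\sum|f_k|\big]=O(1+|a|^{p_1}+b^{p_1/2}\ind\{p(c)<1\})$ extracted from the proof of Proposition~\ref{lem:existence} and comparing it against the lower bound for $V_{p(c)}(a,b)$ from \eqref{p8}. Your version is somewhat more explicit: you track the uniformity in $a$ via $\max(|a|,\sqrt b)$ (the paper simply says the error is ``of smaller order than $(a-c\sqrt b)^{p(c)}$''), and you spell out the diagonal construction of $\gamma_b$, which the paper omits entirely.
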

\begin{proof}
    Monotonicity of $a\mapsto W(a,b)$ is rather obvious. It is immediate from the observation that $V_{p(c)}(a,b)$ and $T_{a,b}$ are increasing in $a$.

	If $a>(c(p)+\gamma)\sqrt{b}$, then $V_{p(c)}(a,b)\geq C(a-c\sqrt{b})^{p(c)}$ for some suitable constant $C,C'$. It suffices thus that in every case, $\EW{\sum\limits_{k=0}^{T_{a,b}-1}|f_k|}$ is of lower order.
	
	This statement follows from the proof of Proposition \ref{lem:existence}: there we have shown that if $p(c)>1$ then $\EW{\sum\limits_{k=0}^{T_{a,b}-1}|f_k|}=O((1+|a|)^{p_1})$ for an appropriately chosen $p_1<p(c)$. And for $p(c)<1$ we have proven that $\EW{\sum\limits_{k=0}^{T_{a,b}-1}|f_k|}=O((1+|a|^{p_1}+b^{p_1/2}))$ for an appropriate $p_1<p(c)$. In both cases this term is of smaller order than $(a-c\sqrt{b})^{p(c)}$. Thus, the proof is complete.
\end{proof}
\begin{proof}[Proof of Corollary~\ref{cor:UBound}]
Recalling that $a\mapsto W(a,b)$ is increasing and using Lemma~\ref{lem:probprodbound}, we obtain 
$$
\P(T_{a,b}>n)\le\frac{\E[W(a+S(n),b+n);T_{a,b}>n]}{\E[W(a+S(n),b+n);a+S(n)>c\sqrt{b+n}]}.
$$
Then, by the harmonicity of $W$,
$$
\P(T_{a,b}>n)\le\frac{W(a,b)}{\E[W(a+S(n),b+n);a+S(n)>c\sqrt{b+n}]}.
$$
Thus, it remains to derive a lower bound for the expectation in the denominator.

We know from Theorem~\ref{thm:space-time-h} that $W(a,b)\sim V_{p(c)}(a,b)$
as $a>(c+1)\sqrt{b}$ and $a\to\infty.$ Combining this with Lemma~\ref{lem:V-prop}, we conclude that 
\begin{align*}
&\E[W(a+S(n),b+n);a+S(n)>c\sqrt{b+n}]\\
&\hspace{1cm}
\ge A (b+n)^{p(c)/2}\P(a+S(n)>(c+1)\sqrt{b+n})
\end{align*}
for some constant $A$, which does not depend on $a$ and $b$.
Since $a>c\sqrt{b}$ and $n\ge b$,
\begin{align*}
\P(a+S(n)>(c+1)\sqrt{b+n})&\ge \P(S(n)>(c+1)\sqrt{b+n}-c\sqrt{b})\\
&\ge \P(S(n)>(|c|+1)\sqrt{2n}).
\end{align*}
The desired bound follows now from the central limit theorem.
\end{proof}

\section{Asymptotics for \texorpdfstring{$\P(T_{a,b}>n)$}{thm.tail}: proof of Theorem~\ref{thm:tail}.}
For all $k<n$ we define 
\begin{align*}
Q_{k,n}(y)
&:=\P(y+S(j)>g(k+j)\text{ for all }0\le j\le n-k)\\
&=\P(y+S(j)>c\sqrt{b+k+j}\text{ for all }0\le j\le n-k),\\
&=\P(T_{a+y, b+k}>n-k).
\end{align*}

Applying Corollary~\ref{cor:UBound} 
we conclude that
$$
Q_{k,n}(y)\le M \frac{W(a+y,b+k)}{(n-k)^{p(c)/2}}, \quad k\le \frac{n-b}{2}.
$$
Therefore, for every $k\le \frac{n-b}{2}$,
\begin{align}
\label{eq:bp3}
\left|(n-k)^{p(c)/2}Q_{k,n}(y)-\varkappa W(a+y,b+k)\right|
\le(M+\varkappa)W(a+y,b+k).
\end{align}

Let $\gamma_n$ be a sequence, which converges to zero sufficiently slow. 
Choose now a further sequence $\mu_n$ such that $\mu_n\to0$ and $\frac{\mu_n}{\gamma_n}\to\infty$. If $\gamma_n\to0$ sufficiently slow then, using the functional central limit theorem, we get, uniformly 
in $(y,k)\in I_n$, where 
$ I_n:=\{(y,k)\,\colon\, k\le (n-b)/2,\,a+y-c\sqrt{b+k}\in(\gamma_n\sqrt{n},\mu_n\sqrt{n})\}$, 
$$
\P(T_{a+y,b+k}>n-k) \sim \P(T^{(bm)}_{a+y,b+k}>n-k). 
$$
Then, using Uchiyama~\cite{U80},  
\[
\left|\P(T_{a+y,b+k}>n-k)-\varkappa\frac{V_{p(c)(a+y,b+k)}}{(n-k)^{p(c)/2}}\right|
\le \varepsilon_n \frac{V_{p}(a+y,b+k)}{(n-k)^{p(c)/2}},
\]
uniformly in  $(y,k)\in I_n$ 
for some $\varepsilon_n\downarrow 0$. 
We know from Lemma~\ref{lem:equiv.W.V}
that 
$$
V_{p(c)}(a+y,b+k)\sim W(a+y,b+k)
$$
uniformly in $(y,k)\in I_n$.
Consequently,
\begin{align}
\label{eq:bp4}
\left|(n-k)^{p(c)/2}Q_{k,n}(y)-\varkappa W(a+y,b+k)\right|
\le 2\widetilde{\varepsilon}_nW(a+y,b+k)
\end{align}
uniformly $(y,k)$ such that $k\le n/2$ and 
$y\in I_n$.

We conclude then
\begin{align}
 \label{eq:bp6}
 \nonumber 
 &\left|(n-k)^{p(c)/2}Q_{k,n}(y)-\varkappa W(a+y,b+k)\right|\\
 &\hspace{0.5cm}\le 
 2\widetilde{\varepsilon}_nW(a+y,b+k)
 +(M+\varkappa)W(a+y,b+k){\rm 1}\{(y,k)\notin I_n\}.
 \end{align}

For every $m<n$ we define 
$$
\nu_m:=\inf\{k\ge1: S(k)-g_{a,b}(k)>\sqrt{m}\}\wedge m.
$$

By the strong Markov property at time $\nu_m$,
$$
\P(T_{a,b}>n)
=\sum_{k=1}^m\int_{\R}\P(S_k\in dy,\nu_m=k<T_{a,b})Q_{k,n}(y).
$$
Applying \eqref{eq:bp6}, we have 
\begin{align*}
&\left|n^{p(c)/2}\P(T_{a,b}>n)
-\varkappa\E[W(a+S(\nu_m),b+\nu_m);T_{a,b}>\nu_m]\right|\\
&\hspace{1.5cm}
+2\widetilde{\varepsilon}_n\E[W(a+S(\nu_m),b+\nu_m);T_{a,b}>\nu_m]\\
&\hspace{2cm}
+C\E[W(a+S(\nu_m),b+\nu_m); T_{a,b}>\nu_m, (S(\nu_m),\nu_m)\notin I_n].
\end{align*}
Since $\nu_m$ is a bounded stopping time,
\[
\E[W(a+S(\nu_m),b+\nu_m);T_{a,b}>\nu_m] = W(a,b)  
\]
and, consequently, 
\begin{multline}
\label{eq:tail1}
\left|n^{p(c)/2}\P(T_{a,b}>n)
-(\varkappa+o(1))W(a,b)\right|\\ 
\le 
C\E[W(a+S(\nu_m),b+\nu_m);T_{a,b}>\nu_m, (S(\nu_m),\nu_m)\notin I_n]. 
\end{multline}
We are left to show that the right-hand side converges to $0$ for an appropriately chosen sequence $m=m(n)$.

\begin{lemma}\label{lem26}
There exists a constant $C$ such that 
\[
\P (T_{a,b}>\nu_m) \le C\frac{W(a,b)}{m^\frac{p(c)}{2}}, \quad \text{for all } m\ge b.
\]
\end{lemma}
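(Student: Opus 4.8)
The plan is to bound $\P(T_{a,b}>\nu_m)$ by splitting according to the value of $\nu_m$ and the position $S(\nu_m)$ at that time, and then to invoke Corollary~\ref{cor:UBound} together with the harmonicity of $W$. First I would observe that by the definition of $\nu_m$, on the event $\{T_{a,b}>\nu_m\}$ we have either $\nu_m=m$, or $\nu_m=k<m$ with $a+S(k)-c\sqrt{b+k}>\sqrt m$ (the first time the walk rises high above the boundary). In the first case $\{T_{a,b}>\nu_m\}\subset\{T_{a,b}>m\}$, so $\P(T_{a,b}>m,\nu_m=m)\le\P(T_{a,b}>m)$, and I would want to bound this via a result giving $\P(T_{a,b}>m)\le M W(a,b)/m^{p(c)/2}$ for $m\ge b$ — but that is precisely Corollary~\ref{cor:UBound}, so this case is immediate.

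In the second case, I would use the strong Markov property at $\nu_m$: conditioning on $\{\nu_m=k,\,S(k)=y\}$ with $a+y-c\sqrt{b+k}>\sqrt m$ and $k<m$, the event $\{T_{a,b}>\nu_m\}$ just says the walk stayed above the boundary up to time $k$, so
\begin{align*}
\P(T_{a,b}>\nu_m,\,\nu_m<m)
&=\sum_{k=1}^{m-1}\int_{a+y-c\sqrt{b+k}>\sqrt m}\P(S(k)\in dy,\,\nu_m=k<T_{a,b}).
\end{align*}
This last sum is simply $\P(T_{a,b}>\nu_m,\,\nu_m<m)$ itself, which is at most $\P(T_{a,b}>\nu_m)$. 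To close the argument I would instead use harmonicity directly: since $\nu_m\wedge m$ is a bounded stopping time and $W$ is a positive harmonic function for the killed walk (Lemma~\ref{lem:harmonicity} iterated, or rather \eqref{eq:sth1}), optional stopping gives
$$
W(a,b)=\E[W(a+S(\nu_m),b+\nu_m);T_{a,b}>\nu_m].
$$
On the event $\{T_{a,b}>\nu_m,\nu_m<m\}$ one has $a+S(\nu_m)-c\sqrt{b+\nu_m}>\sqrt m$, hence $a+S(\nu_m)>c\sqrt{b+\nu_m}$ and, since $b+\nu_m\le b+m\le 2m$ (as $m\ge b$), $W(a+S(\nu_m),b+\nu_m)\ge c_0 m^{p(c)/2}$ for a suitable constant $c_0>0$ by the lower bound in Lemma~\ref{lem:V-prop} (relation \eqref{p8}, via $W\ge c\, V_{p(c)}$ on this region, using Lemma~\ref{lem:equiv.W.V} or the monotonicity and positivity of $W$). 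Combining,
$$
W(a,b)\ge \E[W(a+S(\nu_m),b+\nu_m);T_{a,b}>\nu_m,\nu_m<m]\ge c_0 m^{p(c)/2}\P(T_{a,b}>\nu_m,\nu_m<m),
$$
which rearranges to $\P(T_{a,b}>\nu_m,\nu_m<m)\le c_0^{-1}W(a,b)/m^{p(c)/2}$; together with the bound on $\{\nu_m=m\}$ from Corollary~\ref{cor:UBound} this yields the claim with $C=M+c_0^{-1}$.

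The main obstacle I anticipate is making the lower bound $W(a+S(\nu_m),b+\nu_m)\ge c_0 m^{p(c)/2}$ fully uniform in $a,b$ on the event in question. On $\{T_{a,b}>\nu_m,\nu_m<m\}$ we control $a+S(\nu_m)-c\sqrt{b+\nu_m}>\sqrt m$ but not the ratio $(a+S(\nu_m))/\sqrt{b+\nu_m}$, which could be close to $c$; hence I cannot directly apply the asymptotic equivalence $W\sim V_{p(c)}$, and instead must rely on the two-sided bound \eqref{p8} for $V_{p(c)}$, which only requires $x-c\sqrt t>\gamma\sqrt t$. Here the gain $\sqrt m$ over the boundary need not exceed $\gamma\sqrt{b+\nu_m}$ if $b$ is large relative to $m$ — but since the statement assumes $m\ge b$, we have $b+\nu_m\le 2m$, so $x-c\sqrt t>\sqrt m\ge \sqrt{(b+\nu_m)/2}$, i.e. \eqref{p7} holds with $\gamma=1/\sqrt2$, and \eqref{p8} gives $V_{p(c)}(a+S(\nu_m),b+\nu_m)\ge (\sqrt m)^{p(c)}/C_0(1/\sqrt2)$. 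Finally one needs $W\ge \mathrm{const}\cdot V_{p(c)}$ on this region; this is where I would either quote the part of Lemma~\ref{lem:equiv.W.V} showing $W(a,b)\ge V_{p(c)}(a,b)-O((1+|a|^{p_1}+b^{p_1/2}))$ with $p_1<p(c)$, so that the error is of strictly lower order and the bound $W\ge \tfrac12 V_{p(c)}$ holds once $a+S(\nu_m)-c\sqrt{b+\nu_m}$ is large enough, i.e. once $m$ exceeds an absolute constant — absorbing small $m$ into the constant $C$ as usual. Verifying that this lower-order control is genuinely uniform in $(a,b)$ (not just for fixed $a,b$) is the delicate point, but it follows from the proof of Proposition~\ref{lem:existence} where all estimates were stated with explicit $a,b$-dependence.
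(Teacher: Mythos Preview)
Your proposal is correct and follows essentially the same argument as the paper: split according to $\{\nu_m=m\}$ versus $\{\nu_m<m\}$, handle the first via Corollary~\ref{cor:UBound}, and for the second use the harmonicity identity $W(a,b)=\E[W(a+S(\nu_m),b+\nu_m);T_{a,b}>\nu_m]$ together with the lower bound $W(a+S(\nu_m),b+\nu_m)\ge c_0 m^{p(c)/2}$ on $\{\nu_m<m\}$, obtained from \eqref{p8} and Lemma~\ref{lem:equiv.W.V} after observing that $m\ge b$ forces $b+\nu_m\le 2m$ so that $\gamma=1/\sqrt{2}$ works in \eqref{p7}. The paper streamlines the lower-bound step slightly by first invoking monotonicity of $W$ to reduce to the single point $(c\sqrt{b+k}+\sqrt m,\,b+k)$ before comparing with $V_{p(c)}$, which makes the uniformity issue you flag disappear immediately; your resolution via the explicit error bounds from Proposition~\ref{lem:existence} is equivalent but a bit heavier.
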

\begin{proof}
It is immediate from the definition of $\nu_m$ that 
\begin{align}
\label{eq:Wprop6}
\nonumber 
\P(T_{a,b}>\nu_m)
&=\P(T_{a,b}>\nu_m=m)+\P(T_{a,b}>\nu_m,\nu_m<m)\\
&\le\P(T_{a,b}>m)+\P(T_{a,b}>\nu_m,S(\nu_m)>g_{a,b}(\nu_m)+\sqrt{m}). 
\end{align}
According to Corollary~\ref{cor:UBound}, 
\[
\P(T_{a,b}>m)\le M\frac{W(a,b)}{m^{p(c)/2}},
\]
for some constant $M.$ 

If $y>g_{a,b}(k)+\sqrt{m}$ and $k\le m$ then 
\begin{align*}
W(a+y,b+k)&\ge W(a+g_{a,b}(k),b+k)
=W(c\sqrt{b+k}+\sqrt m, b+k)\\
&\ge C 
V_{p(c)}(c\sqrt{b+k}+
\sqrt{m})\ge C
m^{p(c)/2}
\end{align*}
for some positive constant $C$. Using this fact and applying the Markov inequality, we have 
\begin{align*}
 \P(T_{a,b}>\nu_m,S(\nu_m)>g_{a,b}(\nu_m)+\sqrt{m})
 \le C\frac{\E[W(a+S(\nu_m),b+\nu_m);T_{a,b}>\nu_m]}{m^{p(c)/2}}.
\end{align*}
Since $W$ is harmonic we further obtain that 
\[
 \P(T_{a,b}>\nu_m,S(\nu_m)>g_{a,b}(\nu_m)+\sqrt{m})
 \le C\frac{\E[W(a,b)}{m^{p(c)/2}}.
\]
\end{proof}
\begin{lemma}
\label{lem:W-tail}
If $A_m\to\infty$ as $m\to\infty$ then 
\begin{equation}
\label{eq:Wprop9}
\E[W(a+S(\nu_m),b+\nu_m);T_{a,b}>\nu_m, S(\nu_m)>A_m\sqrt{m}]
=o(1).
\end{equation}
\end{lemma}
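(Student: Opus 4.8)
The plan is to decompose the expectation in~\eqref{eq:Wprop9} according to the value of the bounded stopping time $\nu_m$ and to exploit that, on the event $\{S(\nu_m)>A_m\sqrt m,\,T_{a,b}>\nu_m\}$, a single large increment of order $A_m\sqrt m$ must occur at time $\nu_m$. Concretely, I would write $E:=\E[W(a+S(\nu_m),b+\nu_m);T_{a,b}>\nu_m,S(\nu_m)>A_m\sqrt m]=\sum_{k=1}^m E_k$ with $E_k:=\E[W(a+S(k),b+k);\nu_m=k,T_{a,b}>k,S(k)>A_m\sqrt m]$, and observe that $\{\nu_m=k,T_{a,b}>k\}$ is contained in the $\mathcal F_{k-1}$-measurable event $B_{k-1}:=\{a+S(j)-c\sqrt{b+j}\in(0,\sqrt m]\text{ for all }1\le j\le k-1\}$. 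On $B_{k-1}$ one has, for $m\ge b$, $a+S(k-1)\le c\sqrt{b+k-1}+\sqrt m\le c_1\sqrt m$ with $c_1:=|c|\sqrt2+1$, so on $B_{k-1}\cap\{S(k)>A_m\sqrt m\}$ the increment $X_k=S(k)-S(k-1)$ satisfies $X_k>A_m\sqrt m-c_1\sqrt m-|a|\ge\tfrac12A_m\sqrt m$ once $m$ is large (recall $A_m\to\infty$); thus $E_k\le\E[W(a+S(k),b+k)\mathbf{1}_{B_{k-1}}\mathbf{1}\{X_k>\tfrac12A_m\sqrt m\}]$.

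Next I would bound $W$ on this event. There $0<a+S(k)=a+S(k-1)+X_k\le2X_k$, and $a+S(k)-c\sqrt{b+k}$ is of order $X_k$, which is $\gg\sqrt{b+k}$ since $b+k\le2m$ and $X_k\ge\tfrac12A_m\sqrt m$ with $A_m\to\infty$. Using~\eqref{p8} together with the bound $W(x,t)\le C(1+|x|^{p(c)}+t^{p(c)/2})$ for $x>c\sqrt t$ — which follows from Lemmas~\ref{lem:V-boundary} and~\ref{lem:V-boundary-01} and the estimate $\E[\sum_{j=0}^{T_{x,t}-1}|f(x+S(j),t+j)|]\le C(1+|x|^{p_1}+t^{p_1/2}\mathbf{1}\{p(c)<1\})$ (valid for any $p_1<p(c)$) established in the proofs of Proposition~\ref{lem:existence} and Lemma~\ref{lem:equiv.W.V} — one gets $W(a+S(k),b+k)\le CX_k^{p(c)}$ on this event, for $m$ large. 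Since $X_k$ is independent of $\mathcal F_{k-1}$ and $B_{k-1}\subseteq\{T_{a,b}>k-1\}$, this yields
\[
E\le C\sum_{k=1}^m\P(B_{k-1})\,\E\!\left[X^{p(c)};X>\tfrac12A_m\sqrt m\right]\le C\Big(\sum_{j=0}^{m-1}\P(T_{a,b}>j)\Big)\E\!\left[X^{p(c)};X>\tfrac12A_m\sqrt m\right].
\]

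To conclude, I would use Corollary~\ref{cor:UBound}, which shows that $\sum_{j=0}^{m-1}\P(T_{a,b}>j)$ stays bounded if $p(c)>2$, is $O(\log m)$ if $p(c)=2$, and is $O(m^{1-p(c)/2})$ if $p(c)<2$. If $p(c)>2$, then $\E[X^{p(c)};X>\tfrac12A_m\sqrt m]\to0$ because $\E|X|^{p(c)}<\infty$ and $A_m\to\infty$, so $E\to0$. If $p(c)\le2$, the hypothesis $\E|X|^{2+\delta}<\infty$ gives $\E[X^{p(c)};X>t]\le t^{p(c)-2-\delta}\E|X|^{2+\delta}$, so $\E[X^{p(c)};X>\tfrac12A_m\sqrt m]\le Cm^{(p(c)-2-\delta)/2}$; multiplying by the bound on the partial sum gives $E\le Cm^{-\delta/2}\log m\to0$.

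The step I expect to be the main obstacle is the regime $p(c)\le2$. There the partial sums $\sum_{j=0}^{m-1}\P(T_{a,b}>j)$ grow (polynomially, up to a logarithm) while the mere finiteness of $\E|X|^{p(c)}$ yields no decay rate for $\E[X^{p(c)};X>A_m\sqrt m/2]$, so no naive estimate closes; the point is that precisely for these $c$ the stronger hypothesis $\E|X|^{2+\delta}<\infty$ of Theorem~\ref{thm:space-time-h} is active and produces exactly the polynomial gain $t^{p(c)-2-\delta}$ needed to beat the growth of the partial sums. A minor technical point to check is the uniformity in $t=b+k$ (including moderate $t$) of the estimate $W(a+S(k),b+k)\le CX_k^{p(c)}$, which one reads off from~\eqref{p8} and the cited control on the tail of $\sum_j|f(\cdot)|$.
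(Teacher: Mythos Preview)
Your proof is correct and follows essentially the same route as the paper: isolate the last increment $X_{\nu_m}$, show it must exceed $\tfrac12A_m\sqrt m$ on the event in question, bound $W(a+S(\nu_m),b+\nu_m)$ by $CX_{\nu_m}^{p(c)}$, and then factor via independence into $\big(\sum_{k\le m}\P(T_{a,b}>k-1)\big)\cdot\E[X^{p(c)};X>\tfrac12A_m\sqrt m]$, treating $p(c)>2$ and $p(c)\le2$ separately. Your handling of the borderline $p(c)=2$ with an explicit $\log m$ is in fact slightly more careful than the paper's, and your justification of $W\le CX_k^{p(c)}$ via the global bound $W(x,t)\le C(1+|x|^{p(c)}+t^{p(c)/2})$ is a valid alternative to the paper's appeal to $W(x,t)\le C|x|^{p(c)}$ for $x>(c+1)\sqrt t$.
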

\begin{proof}
Combining Lemma~\ref{lem:V-prop} and Lemma~\ref{lem:equiv.W.V} we have  
$$
W(x,t)\le C|x|^{p(c)}\quad\text{for all }x>(c+1)\sqrt{t}.
$$
Moreover, by the definition of $\nu_m$,
$$
|S(\nu_m)|\le 
c\sqrt{\nu_{m-1}+b}-a+\sqrt{m} +X_{\nu_m}
\le 
(|c|+2)\sqrt{m}+X_{\nu_m}
$$
for all sufficiently large values of $m$. 
This implies that there exists $m_0$ such that 
$\{S(\nu_m)>A_m\sqrt{m}\}\subset\left\{X_{\nu_m}>\frac{A_m}{2}\sqrt{m}\right\}$
and $W(a+S(\nu_m),b+\nu_m)\le C(X_{\nu_m})^{p(c)}$ for all $n\ge n_0$. Combining these estimates with the strong Markov property, we obtain 
\begin{align*}
&\E[W(a+S(\nu_m),b+\nu_m);T_{a,b}>\nu_m, S(\nu_m)>A_m\sqrt{m}]\\
&\hspace{1cm}
\le \sum_{k=1}^n\P(T_{a,b}>k-1)\E[X^{p(c)};X>A_m\sqrt{m}/2]. 
\end{align*}
When $p(c)>2$ then by Corollary~\ref{cor:UBound} expectation   $\E[T_{a,b}]$  is finite and~\eqref{eq:Wprop9} follows then from the Markov inequality.

Assume now that $p(c)\le2$. Since in this case we additionally assume that
$\E|X|^{2+\delta}$ is finite it follows that  
$\E[X^{p(c)};X>A_m\sqrt{m}/2]=o(m^{p(c)/2-1-\delta/2})$. 
Using once again Corollary~\ref{cor:UBound} we see that 
\[
\sum_{k=1}^m \P(T_{a,b}>k-1) \le C m^{1-\frac{p(c)}{2}}. 
\]
Combining these two estimates we see that~\eqref{eq:Wprop9} holds for $p(c)\le 2$ as well.  
\end{proof}

We can now complete the proof of Theorem~\ref{thm:tail}
by showing that
\[
E_n:=\E[W(a+S(\nu_m), b+\nu_m);T_{a,b}>\nu_m, (S(\nu_m),\nu_m)\notin I_n]
=o(1). 
\]
We have, $E_n\le E_{n,1}+E_{n,2}$, where 
\begin{align*}
E_{n,1}& =  
\E[W(a+S(\nu_m), b+\nu_m);T_{a,b}>\nu_m, a+S(\nu_m)\ge \nu_n\sqrt n -|c|\sqrt{b+m}],\\
E_{n,2}&=
\E[W(a+S(\nu_m), b+\nu_m);T_{a,b}>\nu_m, a+S(\nu_m)-c\sqrt{b+\nu_m}\le \gamma_n\sqrt n].
\end{align*}
When $\frac{\mu_n \sqrt n}{\sqrt m}\to \infty$ the first expectation $E_{n,1}=o(1)$ by Lemma~\ref{lem:W-tail}.  

Since $W(y,t)$ is increasing in $y$, 
\[
E_{n,2} \le 
\E[W(\gamma_n \sqrt n +c\sqrt{b+\nu_m}, b+\nu_m);T_{a,b}>\nu_m]. 
\]
Using Lemma~\ref{lem:equiv.W.V} and $V_{p(c)}(x,t)\le C(x-c\sqrt{t})^{p(c)}$ we obtain 
\[
E_{n,2} \le C(\gamma_n \sqrt{n})^{p(c)} \P(T_{a,b}>\nu_m).
\]
Applying now Lemma~\ref {lem26}
\[
E_{n,2}\le C\left(\frac{\gamma_n \sqrt n}{\sqrt m}\right)^{p(c)} =o(1) 
\]
for $m=m(n)$ such that $\frac{m(n)}{\gamma_n^2 n}\to \infty$.
Thus, $E_n\to0$ if $m(n)$ is such that $\frac{m(n)}{\gamma_n^2 n}\to \infty$ and $\frac{m(n)}{\mu_n^2 n}\to 0$. This completes the proof of Theorem~\ref{thm:space-time-h}.


    \end{document}